\numberwithin{table}{section}
\newtheorem{theorem}{Theorem}[section]
\newtheorem{corollary}[theorem]{Corollary}
\newtheorem{lemma}[theorem]{Lemma}
\newtheorem{proposition}[theorem]{Proposition}
\newtheorem{definition}[theorem]{Definition}
\newtheorem{example}{Example}[section]
\newtheorem{remark}[theorem]{Remark}
\newcommand{\be}{\begin{equation}}
\newcommand{\ee}{\end{equation}}
\newcommand{\bee}{\begin{equation*}}
\newcommand{\eee}{\end{equation*}}
\newcommand{\bea}{\begin{eqnarray}}
\newcommand{\eea}{\end{eqnarray}}
\newcommand{\beaa}{\begin{eqnarray*}}
\newcommand{\eeaa}{\end{eqnarray*}}
\newcommand{\R}{\mathbb{R}}
\newcommand{\cA}{\mathcal{A}}
\newcommand{\cX}{\mathcal{X}}
\newcommand{\cB}{\mathcal{B}}
\newcommand{\cF}{\mathcal{F}}
\newcommand{\cH}{\mathcal{H}}
\newcommand{\cO}{{\mathcal{O}}}
\newcommand{\prox}{\mathbf{prox}}
\newcommand{\argmin}{\mathop{\rm argmin}}
\newcommand{\argmax}{\mathop{\rm argmax}}
\newcommand{\half}{\frac{1}{2}}
\begin{document}

\title{On Lower Iteration Complexity Bounds for the \\
	Convex Concave Saddle Point Problems}

\author{
Junyu Zhang\thanks{Department of Electrical and Computer Engineering, Princeton University, junyuz@princeton.edu  $~\qquad\qquad\qquad~\,\,\,$ 
Department of Industrial Systems Engineering and Management, National University of Singapore}
\and
Mingyi Hong\thanks{Department of Electrical and Computer Engineering, University of Minnesota, mhong@umn.edu}
\and
Shuzhong Zhang\thanks{Department of Industrial and Systems Engineering, University of Minnesota, zhangs@umn.edu}
}
\date{\today}
\maketitle
\begin{abstract}
In this paper, we study the lower iteration complexity bounds for finding the saddle point of 
a strongly convex and strongly concave saddle point problem: $\min_x\max_yF(x,y)$. We restrict the classes of algorithms in our investigation to be either pure first-order methods or methods using proximal mappings. For problems with gradient Lipschitz constants ($L_x, L_y$ and $L_{xy}$) and strong convexity/concavity constants ($\mu_x$ and $\mu_y$),
the class of pure first-order algorithms with the linear span assumption is shown to have a  lower iteration complexity bound of $\Omega\,${\footnotesize$\left(\sqrt{\frac{L_x}{\mu_x}+\frac{L_{xy}^2}{\mu_x\mu_y}+\frac{L_y}{\mu_y}}\cdot\ln\left(\frac{1}{\epsilon}\right)\right)$},   where the term {\small$\frac{L_{xy}^2}{\mu_x\mu_y}$} explains how the coupling influences the iteration complexity. Under several special parameter regimes, this lower bound has been achieved by corresponding optimal algorithms. However, whether or not the bound under the general parameter regime is optimal remains open. Additionally, for the special case of bilinear coupling problems, given the availability of certain proximal operators, a lower bound of {\footnotesize$\Omega\left(\sqrt{\frac{L_{xy}^2}{\mu_x\mu_y}}\cdot\ln(\frac{1}{\epsilon})\right)$} is established under the linear span assumption, and optimal algorithms have already been developed in the literature. By exploiting the orthogonal invariance technique, we extend both lower bounds to the general pure first-order algorithm class and the proximal algorithm class without the linear span assumption. As an application, we apply proper scaling to the worst-case instances, and we derive the lower bounds for the general convex-concave problems with $\mu_x = \mu_y = 0$. Several existing results in this case can be deduced from our results as special cases. 

\vspace{0.3cm}

\noindent {\bf Keywords:} Saddle point, min-max problem, first-order method, proximal mapping, lower iteration complexity bound.
\end{abstract}

\section{Introduction}
In this paper, we establish a lower iteration complexity bound for the first-order methods to solve the following min-max saddle point problem
\begin{equation}
\label{prob:min-max}
\min_x \max_y  F(x,y),
\end{equation}
which is of fundamental importance in, e.g.,
game theory \cite{von2007theory,nisan2007algorithmic}, image deconvolution problems \cite{chambolle2011first}, parallel computing \cite{xiao2019dscovr}, adversarial training \cite{goodfellow2014generative,arjovsky2017wasserstein}, and statistical learning \cite{abadeh2015distributionally}.

To proceed, let us introduce the following two problem classes.

\begin{definition}[Problem class $\cF(L_{x},L_{y},L_{xy},\mu_x,\mu_y)$]
	\label{defn:general-class}
	$F(\cdot,y)$ is  $\mu_x$-strongly convex for any fixed $y$ and $F(x,\cdot)$ is  $\mu_y$-strongly concave for any fixed $x$. Overall, the function $F$ is smooth and $\nabla F$ satisfies the following Lipschitz continuity condition
	\begin{eqnarray}
	\label{defn:Lip}
	\begin{cases}
	\|\nabla_x  F(x_1,y) - \nabla_x F(x_2,y)\|\leq L_x\|x_1-x_2\|,&\forall x_1,x_2,y     \\
	\|\nabla_y  F(x,y_1) - \nabla_y F(x,y_2)\|\leq L_y\|y_1-y_2\|,&\forall x,y_1,y_2     \\
	\|\nabla_x  F(x,y_1) - \nabla_x F(x,y_2)\|\leq L_{xy}\|y_1-y_2\|,&\forall x,y_1,y_2  \\
	\|\nabla_y  F(x_1,y) - \nabla_y F(x_2,y)\|\leq L_{xy}\|x_1-x_2\|,&\forall x_1,x_2,y. \\
	\end{cases}
	\end{eqnarray}
\end{definition}
We shall remark here that the constants in \eqref{defn:Lip} may also be understood as the bounds on the different blocks of the Hessian matrix $\nabla^2 F(x,y)$ if $F$ is twice continuously differentiable. That is,
\begin{equation*}
\sup_{x,y}\|\nabla_{xx}^2  F(x,y)\|_2\leq L_x,\quad \sup_{x,y}\|\nabla_{yy}^2  F(x,y)\|_2\leq L_y,\quad  \sup_{x,y}\|\nabla_{xy}^2  F(x,y)\|_2\leq L_{xy}.
\end{equation*}
However, throughout this paper we do not assume either $F(\cdot, y)$ or $F(x,\cdot)$ is second-order differentiable.

The second problem class is the bilinear saddle point model:
\begin{definition}[Bilinear  class $\cB(L_{xy},\mu_x,\mu_y)$]
	\label{defn:bilinear-class}
	In this special class, the problems are written as
	\begin{equation}
	\label{prob:bilinear}
	\min_x \max_y  F(x,y):=f(x) + x^\top Ay - g(y),
	\end{equation}
where $f(x)$ and $g(y)$ are both lower semi-continuous with $f(x)$ being $\mu_x$-strongly convex and $g(y)$ being $\mu_y$-strongly convex. The coupling matrix $A$ satisfies $\|A\|_2\leq L_{xy}$.
\end{definition}
For this special model class $\cB(L_{xy},\mu_x,\mu_y)$, 
we assume the availability of the following prox-operations:
\begin{equation}
\label{defn:prox-fg}
\prox_{\gamma f}(v):=\argmin_x f(x) + \frac{1}{2\gamma}\|x-v\|^2 \quad \mbox{ and } \quad\prox_{\sigma g}(u):=\argmin_y g(y) + \frac{1}{2\sigma}\|y-u\|^2.
\end{equation}

In this paper we shall establish the lower iteration complexity bound
\[
\Omega\left(\sqrt{\frac{L_x}{\mu_x}+\frac{L_{xy}^2}{\mu_x\mu_y}+\frac{L_y}{\mu_y}}\cdot\ln\left(\frac{1}{\epsilon}\right)\right) \mbox{ for } \cF(L_{x},L_{y},L_{xy},\mu_x,\mu_y),
\]
and
\[
\Omega\left(\sqrt{\frac{L_{xy}^2}{\mu_x\mu_y}}\cdot\ln\left(\frac{1}{\epsilon}\right)\right) \mbox{ for } \cB(L_{xy},\mu_x,\mu_y)
\]
with the proximal oracles \eqref{defn:prox-fg}. In particular, we first establish these lower bounds for pure first-order and general proximal algorithm classes under the linear span assumption. Later on we generalize the results for more general algorithm classes without the linear span assumption through the orthogonal invariance technique introduced by \cite{nemirovsky1992information}. For more detailed applications of the orthogonal invariance technique in the lower bound derivation, the interested readers are referred to \cite{carmon2017lower-1,carmon2017lower-2,YYXu}.

As an application of the above bound, we apply proper scaling to the worst-case instances and show that the above result implies several exisiting lower bounds for general convex-concave problems with bounded saddle point solutions. Specifically, we have 
\[
\Omega\left(\sqrt{\frac{L_xR_x^2}{\epsilon}}+\frac{L_{xy}R_xR_y}{\epsilon}+\sqrt{\frac{L_yR_y^2}{\epsilon}}\,\,\right) \mbox{ for } \cF(L_{x},L_{y},L_{xy},0,0), \,\,\mbox{and}\,\,\|x^*\|\leq R_x, \|y^*\|\leq R_y,
\]
and
\[
\Omega\left(\frac{L_{xy}R_xR_y}{\epsilon}\right) \mbox{ for } \cB(L_{xy},0,0),\,\,\mbox{and}\,\, \|x^*\|\leq R_x, \|y^*\|\leq R_y.
\]
For the above two lower bounds, we remark that under specific parameter regimes, the first bound is known, see \cite{nemirovsky1992information} for the case with $L_x = L_y = L_{xy}$ and see \cite{YYXu} for the case with $L_y = 0$. However to our best knowledge, the first bound under a general set of parameters as well as the second bound for bilinear problem class are not known. Similar reductions can also be done for the problem classes with only one of $\mu_x$ and $\mu_y$ equal to 0, for which the lower bounds have already been discovered in \cite{YYXu}.

Such lower iteration complexity results shed light on understanding the performance of the algorithms designed for min-max saddle point models. There are numerous results in the literature prior to ours.
As a special case of \eqref{prob:min-max}, the lower bound results of convex minimization problem with $F(x,y) = f(x)$ has been well-studied in the past decades. For convex problems, Nesterov's accelerated gradient method have achieved iteration complexities of $\cO(\sqrt{L/\epsilon})$ for $L$-smooth convex problems, and $\cO\left(\sqrt{\frac{L}{\mu}}\cdot\ln\left(\frac{1}{\epsilon}\right)\right)$ for $L$-smooth and $\mu$-strongly convex problems respectively, and both of them are shown to match the lower complexity bound for the first-order methods; see \cite{nesterov2018lectures}.

However, for the min-max saddle-point models, the situation is more subtle. Due to the convex-concave nature of $F$, the vector field
$$G(x,y) = \begin{pmatrix}
\nabla_x F(x,y)\\
-\nabla_y F(x,y)
\end{pmatrix}$$
is monotone. Hence the convex-concave saddle point problem is often studied as a subclass of the variational inequality problems (VIP); see e.g.  \cite{nesterov2007dual,nesterov2006solving,juditsky2011solving,nemirovski2004prox,marcotte1987note,taji1993globally} and references therein. Although there have been plenty of studies on the variational inequalities model, the roles played by different Lischitz constants on the different blocks of variables have not been fully explored in the literature.
In other words, often one would denote
$L$ to be an overall Lipschitz constant of the vector field $G$, which is of the order $\Theta(\max\{L_x,L_y,L_{xy}\})$ in our case, and set $\mu$ to be the strong monotonicity parameter of $G$, which is of the order $\Theta(\min\{\mu_x,\mu_y\})$ in our case, and no further distinctions among the parameters would be made. Hence the considered problems are of special instances in $\cF(L,L,L,\mu,\mu)$. Under such settings, many algorithms including the mirror-prox algorithm \cite{nemirovski2004prox}, the extra-gradient methods \cite{korpelevich1976extragradient,mokhtari2019unified}, and the accelerated dual extrapolation\footnote{In Nesterov's original paper \cite{nesterov2006solving}, the author did not give a name to his algorithm.
For convenience of referencing, in this paper we shall call it {\it accelerated dual extrapolation}.}
\cite{nesterov2006solving} and so on, have all achieved the iteration complexity of $\cO\left(\frac{L}{\mu}\cdot\ln\left(\frac{1}{\epsilon}\right)\right)$, and this complexity is shown to be optimal for first-order methods in solving the problem class $\cF(L,L,L,\mu,\mu)$; see \cite{nemirovsky1983problem}. However, under the more general parameter regime of $\cF(L_x,L_y,L_{xy},\mu_x,\mu_y)$, these methods are not optimal. For example, Nesterov's accelerated dual extrapolation method \cite{nesterov2006solving} has a complexity of $\cO\left(\frac{\max\{L_x,L_{xy},L_y\}}{\min\{\mu_x,\mu_y\}}\cdot\ln\left(\frac{1}{\epsilon}\right)\right)$, even if the algorithm are modified carefully one can only guarantee a complexity of {\small$\cO\left(\sqrt{\frac{L_x^2}{\mu_x^2} + \frac{L_{xy}^2}{\mu_x\mu_y} + \frac{L_y^2}{\mu_y^2}}\cdot\ln\left(\frac{1}{\epsilon}\right)\right)$}, both of which do not match the lower bound provided in this paper. More recently, tighter upper bounds have been derived. In \cite{lin2020near}, the authors consider the problems class of $\cF(L,L,L,\mu_x,\mu_y)$ and achieve an upper bound of {\small$\cO\left(\sqrt{\frac{L^2}{\mu_x\mu_y}}\cdot\ln^3\left(\frac{1}{\epsilon}\right)\right)$}, which matches our lower bound when $L_{x} = L_{y} = L_{xy}=L$ up to a logarithmic term. In \cite{wang2020improved}, the authors consider the general problems class of $\cF(L_x,L_y,L_{xy},\mu_x,\mu_y)$, the proposed algorithm achieves an upper bound of {\small$\cO\left(\sqrt{\frac{L_x}{\mu_x} + \frac{L\cdot L_{xy}}{\mu_x\mu_y} + \frac{L_y}{\mu_y}}\cdot\ln^3\left(\frac{L^2}{\mu_x\mu_y}\right)\cdot\ln\left(\frac{1}{\epsilon}\right)\right)$} where $L = \max\{L_x,L_y,L_{xy}\}$, which almost matches our lower bound for the general problem class.
Despite
the gap for the general problem class $\cF(L_x,L_{xy},L_y,\mu_x,\mu_y)$, given the availability of proximal operators the authors of \cite{chambolle2011first,chambolle2016ergodic} have derived an algorithm for problem class $\cB(L_{xy},\mu_x,\mu_y)$ with complexity {\small$\cO\left(\sqrt{\frac{L_{xy}^2}{\mu_x\mu_y}}\cdot\ln\left(\frac{1}{\epsilon}\right)\right)$}. We will prove in this paper that this result has matched the theoretical lower complexity bound for its problem and algorithm classes, hence optimal.

For the bilinear problem \eqref{prob:bilinear}, when $f$ is smooth and convex, $g(y) = b^\top y$ is linear, the problem is equivalent to the following convex optimization problem $$\min_x  \{f(x): A^\top x-b = 0\}.$$
Without using projection onto the hyperplane $\{x:A^\top x=b\}$ which requires a matrix inversion, pure first-order methods achieve $\cO(1/\epsilon)$ complexity despite the strong convexity of $f$; see e.g. \cite{gao2017first,xu2017accelerated,ouyang2015accelerated}. Those iteration complexity bounds are shown to match the lower bound provided in \cite{YYXu}. For more details on the lower and upper bounds on this formulation, the interested readers are referred to \cite{YYXu}.  Finally, for the bilinear coupling problem \eqref{prob:bilinear}, the authors of \cite{LowB-Quadratic}  show that a lower bound of {\footnotesize $\cO\left(\sqrt{\frac{L_{xy}^2 + \mu_x\mu_y}{\mu_{xy}^2 + \mu_x\mu_y}}\cdot\ln\left(\frac{1}{\epsilon}\right)\right)$} can be derived, where $\mu_{xy}$ stands for the minimum singular value of the coupling matrix $A$. It is interesting that this result covers the linear convergence phenomenon for pure bilinear saddle point problem \cite{Bilear-LinearCVG} where $f(x)\equiv g(y)\equiv 0$. Another remark is that, due to the special construction of the worst-case instance and algorithm class, \cite{LowB-Quadratic} cannot characterize the impact of $L_x$ and $L_y$ as well as the lower bound for proximal algorithm class.


Other than studies on the first-order algorithms, there are also 
studies on the higher-order methods as well. For example, in \cite{arjevani2019oracle}  lower iteration complexity bounds for second-order methods are considered, and in \cite{agarwal2017lower,nesterov2018implementable} lower iteration complexity bounds are presented for general higher-order (tensor) methods. For smooth nonconvex optimization, in \cite{carmon2017lower-2} the iteration complexity lower bounds for first-order methods are considered, while in \cite{carmon2017lower-1} 
that for higher-order methods are considered. 

Another line of research is for the non-conex/concave min-max saddle point problems;
see \cite{jin2019local,jin2019minmax} and the references therein. To guarantee convergence, additional structures are often needed. For example, if one assumes that the solutions of the problem satisfy the Minty variational inequality \cite{lin2018solving} then convergent algorithm can be constructed. Another important situation is when $F$ is concave in $y$. In that case, convergence and iteration complexity to a stationary solution is possible; see e.g. \cite{lu2019hybrid}. For more literatures in this type of problems, we refer the interested readers to \cite{lin2019gradient} and the references therein.

\vspace{0.4cm}

\textbf{Organization.} This paper is organized as follows. In Section \ref{sec:Preliminary}, we introduce two different 
algorithm classes (with or without proximal-operators).  In Section \ref{sec:lower-bound-proximal}, we construct a worst-case example for problem class $\cB(L_{xy},\mu_x,\mu_y)$ and derive the corresponding lower iteration complexity bound for the algorithm class allowing proximal-operators. An optimal algorithm is discussed in this case. In Section \ref{sec:lower-bound-pure}, we construct the worst-case example for problem class $\cF(L_x,L_y,L_{xy},\mu_x,\mu_y)$ and establish the corresponding lower complexity bound for the first-order method (without any proximal oracles). Optimal algorithms under several special parameter regimes are discussed. Finally, we conclude the paper in Section \ref{sec:conclusion}.

\section{The first-order algorithm classes}\label{sec:Preliminary}
In this section, we discuss some preliminaries for the strongly convex and strongly concave saddle point problem. 
Then, we shall introduce two algorithm classes to set the ground for our discussion, and we shall also note specific known algorithms as representative members in those algorithm classes.

\subsection{Primal function, dual function, and the duality gap}
First, we define $\Phi(\cdot)$ to be the primal function and $\Psi(\cdot)$ to be the dual function of the saddle point problem $\min_x \max_y F(x,y)$, respectively, with the following definitions
\begin{equation}
\label{defn:primal-func}
\Phi(x):=\max_y F(x,y)\qquad\mbox{ and }\qquad\Psi(y):=\min_x F(x,y).
\end{equation}
As the maximum of a class of $\mu_x$-strongly convex function, we know $\Phi(x)$ is a $\mu_x$-strongly convex function. Similarly, $\Psi(y)$ is a $\mu_y$-strongly concave function. We define the duality gap as
$$\Delta(x,y) := \max_{y'} F(x,y')-\min_{x'} F(x',y) = \Phi(x)-\Psi(y).$$
Suppose the unique solution of this min-max problem is $(x^*,y^*)$. By the strong duality theorem, we know for any $x$ and $y$ it holds that 
$$\Phi(x)\geq\min_{x'}\Phi(x') = \Phi(x^*) =F(x^*,y^*) = \Psi(y^*) = \max_{y'}\Psi(y')\geq \Psi(y).$$
Together with the $\mu_x$-strong convexity of $\Phi$ and the $\mu_y$-strong concavity of $\Psi$, we further have 
\begin{equation}
\label{defn:primal-gap-LB}
\Delta(x,y)= \Phi(x)-\Phi(x^*) + \Psi(y^*) - \Psi(y)\geq\frac{\mu_x}{2}\|x-x^*\|^2 + \frac{\mu_y}{2}\|y-y^*\|^2.
\end{equation}

Now, suppose that $(\tilde x_k,\tilde y_k)$ is the approximate solution generated after $k$ iterations of an algorithm. Our aim 
is to lower bound the distance between $(\tilde x_k,\tilde y_k)$ and $(x^*,y^*)$. By \eqref{defn:primal-gap-LB}, this would construct a lower iteration complexity bound in terms of the duality gap as well.

\subsection{Proximal algorithm class}
First, let us consider the bilinearly coupled problem class \eqref{prob:bilinear} as introduced in Definition \ref{defn:bilinear-class}: 
\begin{equation*}
\min_x \max_y  F(x,y):=f(x) + x^\top Ay - g(y).
\end{equation*}
For this special problem class, let us consider the lower iteration bound of the algorithm class where the proximal oracles \eqref{defn:prox-fg} are available. 

\begin{definition}[Proximal algorithm class]
\label{defn:Prox-AlgoClass}
In each iteration, the iterate sequence $\{(x^{k},y^{k})\}_{k=0,1,...}$ are generated so that  $(x^{k},y^{k})\in\cH_x^{k} \times \cH_y^{k}$. These subspaces are generated with $\cH_x^{0} = \mathrm{Span}\{x^0\}, \cH_y^{0} = \mathrm{Span}\{y^0\}$ and
\begin{equation}
\label{eqn:Prox-AlgoClass}\begin{cases}
\cH_x^{k+1} := \mathrm{Span}\{x^i,\prox_{\gamma_i f}(\hat{x}^i-\gamma_iA\tilde{y}^i)~\,: \forall \hat{x}^i\in\cH_x^{i},~ \tilde{y}^i\in\cH_y^i,~ 0\leq i\leq k\}\\
\cH_y^{k+1} := \mathrm{Span}\{y^i,\prox_{\sigma_i g}(\hat{y}^i+\sigma_iA^\top\tilde{x}^i): \forall \tilde{x}^i\in\cH_x^{i},~ \hat{y}^i\in\cH_y^i,~ 0\leq i\leq k\}.
\end{cases}
\end{equation}
\end{definition}
Remark that when applying the proximal oracles, it is not necessary to use the most recent iterate $x^k$ as the proximal center. Neither is it necessary to use the gradients of the coupling term (namely the $A^\top x$ and $Ay$ terms) at the current iterate. Instead, the algorithm class allows the usage of the combination of {\it any}\/ points in the historical search space. We shall also remark that the algorithm class in Definition~\ref{defn:Prox-AlgoClass} does not necessarily need to update $x$ and $y$ at the same time, because setting $x^{k+1}=x^k$ or $y^{k+1} = y^k$ also satisfies Definition \ref{defn:Prox-AlgoClass}. Thus this algorithm class also includes the methods that alternatingly update $x$ and $y$. Below is a sample algorithm in this class.

\begin{example}[Algorithm 3 in \cite{chambolle2011first}]
	\label{algo:ChambolleNPock}
	Initialize with $\gamma = \frac{1}{L_{xy}}\sqrt{\frac{\mu_y}{\mu_x}}$, $\sigma = \frac{1}{L_{xy}}\sqrt{\frac{\mu_x}{\mu_y}}$, and $\theta = \frac{L_{xy}}{2\sqrt{\mu_x\mu_y} + L_{xy}}.$ Set $\tilde x^0 = x^0$. Then the algorithm proceeds as
	\begin{equation}
	\begin{cases}
	y^{k+1} = \prox_{\sigma g}(y^k + \sigma A^\top \tilde{x}^k)\\
	x^{k+1} = \prox_{\gamma f}(x^k - \gamma Ay^{k+1})\\
	\tilde{x}^{k+1} = x^{k+1} + \theta(x^{k+1}-x^k).
	\end{cases}
	\end{equation}
\end{example}
It can be observed that this algorithm takes the alternating order of update, by slightly manipulating the index, it can be written in the form of \eqref{eqn:Prox-AlgoClass} in Definition \ref{defn:Prox-AlgoClass}. The complexity of this method is {\footnotesize$\cO\left(\sqrt{\frac{L_{xy}^2}{\mu_x\mu_y}}\cdot\ln\left(\frac{1}{\epsilon}\right)\right)$}.

\subsection{Pure first-order algorithm class}
In constrast to the previous section, here we consider the more general problem class $\cF(L_x,L_y,L_{xy},\mu_x,\mu_y)$:
$$\min_x\max_y F(x,y).$$
For such problems, we refer to the algorithm class as the {\it pure first-order methods}, meaning that there is no proximal oracle in the design of algorithms in this class.

\begin{definition}[Pure first-order algorithm class]
	\label{defn:Pure-AlgoClass}
	In each iteration, the sequence $\{(x_{k},y_{k})\}_{k=0,1,...}$ is generated so that  $(x^{k},y^{k})\in\cH_x^{k} \times \cH_y^{k}$,  with $\cH_x^{0} = \mathrm{Span}\{x^0\}$, $\cH_y^{0} = \mathrm{Span}\{y^0\}$, and
	\begin{equation}
	\label{eqn:Pure-AlgoClass}
	\begin{cases}
	\cH_x^{k+1} := \mathrm{Span}\{x^i,\nabla_xF(\tilde{x}^i,\tilde{y}^i): \forall \tilde{x}^i\in\cH_x^{i}, \tilde{y}^i\in\cH_y^i, 0\leq i\leq k\}\\
	\cH_y^{k+1} := \mathrm{Span}\{y^i,\nabla_yF(\tilde{x}^i,\tilde{y}^i): \forall \tilde{x}^i\in\cH_x^{i}, \tilde{y}^i\in\cH_y^i, 0\leq i\leq k\}.
	\end{cases}
	\end{equation}
\end{definition}

Similar to our earlier comments on the proximal algorithm class, in this class of algorithms the gradients at any combination of points in the historical search space are allowed. The algorithm class also includes the methods that alternatingly update between $x$ and $y$, or even the double loop algorithms that optimize one side until certain accuracy is achieved before switching to the other side. At that level of generality, it indeed accommodates many updating schemes. To illustrate this point, let us present below some sample algorithms in this class.

The first example is a double loop scheme, in which the primal function $\Phi(x)$ is optimized approximately. Specifically, let $y^*(x) = \argmax_y F(x,y)$, by Danskin's theorem, $\nabla \Phi(x) = \nabla_x F(x,y^*(x))$; see e.g. \cite{bertsekas1997nonlinear,rockafellar1970convex}. Therefore, one can apply Nesterov's accelerated gradient method to minimize $\Phi(x)$. The double loop scheme performs this procedure approximately.
\begin{example}[Double loop schemes, \cite{sanjabi2018solving}]
	\label{algo:double-loop}
	Denote  $\alpha_1 = \sqrt{\frac{\mu_x}{L_{\Phi,x}}}$ and $\alpha_2 = \sqrt{\frac{\mu_y}{L_y}}$, where $L_{\Phi,x} = L_x + \frac{L_{xy}^2}{\mu_y}$ is the Lipschitz constant of $\nabla \Phi(x)$ (see \cite{sanjabi2018solving}). Given $(x^0, y^0)$ and define $\bar{x}^0 = x^0$, the double loop scheme works as follows:
	$$\begin{cases}
	x^{k+1} = \bar{x}^k - \frac{1}{L_{\Phi,x}}\nabla_x F(\bar{x}^k, y^k)\\
	\bar x^{k+1} = x^{k+1} + \frac{1-\sqrt{\alpha_2}}{1+\sqrt{\alpha_2}}(x^{k+1}-x^k)
	\end{cases}\quad\mbox{for}\quad k = 0,1,...,T_1,$$
	where the point $y^k$ is generated by an inner loop of accelerated gradient iterations
	$$\begin{cases}
	w^{t+1} = \bar{w}^t + \frac{1}{L_{y}}\nabla_y F(\bar{x}^k, \bar w^t)\\
	\bar w^{t+1} = w^{t+1} + \frac{1-\sqrt{\alpha_1}}{1+\sqrt{\alpha_1}}(w^{t+1}-w^t)
	\end{cases}\,\,\mbox{for}\quad\,\, t = 0,1,...,T_2\,\,\,\, \mbox{and}\,\,\,\, w^0 = \bar w^0 = y^{k-1}.$$
	 Then, set $y^{k}:=w^{T_2+1}$ to be the last iterate of the inner loop.
\end{example}
For simplicity, we have applied a specific scheme of acceleration \cite{nesterov2018lectures} which does not work for nonstrongly-convex problems. In principle, the FISTA scheme can also be used.  For this scheme, with properly chosen $T_1$ and $T_2$, the iteration complexity $\cO${\footnotesize$\left(\sqrt{\frac{L_x}{\mu_x} + \frac{L_{xy}^2}{\mu_x\mu_y}}\cdot\sqrt{\frac{L_y}{\mu_y}}\ln^2\left(\frac{1}{\epsilon}\right)\right)$} is achievable.

In the following, we also list examples of several single loop algorithms, including the gradient descent-ascent method (GDA), the extra-gradient (EG) method \cite{korpelevich1976extragradient} (a special case of mirror-prox algorithm \cite{nemirovski2004prox}), and the accelerated dual extrapolation (ADE) \cite{nesterov2006solving}.
\begin{example}[Single loop algorithms]
	\label{algo:single-loop}
	Let $L = \max\{L_x,L_y,L_{xy}\}, \mu = \min\{\mu_x,\mu_y\}$. Given the initial solution $(x^0,y^0)$, the algorithms proceed as follows:
	$$(\emph{GDA})\qquad\qquad\qquad\qquad\qquad\begin{cases}
	x^{k+1} = x^k - \eta_1 \nabla_x F(x^k,y^k)\\
	y^{k+1}\, = y^k + \eta_1 \nabla_yF(x^k,y^k)
	\end{cases}\qquad\qquad\qquad\qquad\qquad\qquad\qquad\qquad$$
	$$(\emph{EG})\qquad\begin{cases}
	\tilde{x}^{k+1} = x^k - \eta_2 \nabla_x F(x^k,y^k)\\
	\tilde{y}^{k+1}\, = y^k + \eta_2 \nabla_yF(x^k,y^k)
	\end{cases}\mbox{ and}\quad\,\,
	\begin{cases}
	x^{k+1} = x^k - \eta_2 \nabla_x F(\tilde{x}^{k+1},\tilde{y}^{k+1})\\
	y^{k+1}\, = y^k + \eta_2 \nabla_yF(\tilde{x}^{k+1},\tilde{y}^{k+1})
	\end{cases}\qquad\qquad\qquad\qquad\quad$$
	$$(\emph{ADE})\qquad\qquad\begin{cases}
	x^{k+1} = x^k - \eta_3 \left(\frac{\mu}{L+\mu}\nabla_x F(x^k,y^k)+\frac{L}{L+\mu}\nabla_x F(\tilde{x}^{k+1},\tilde{y}^{k+1})\right)\\
	y^{k+1} \,= y^k + \eta_3 \left(\frac{\mu}{L+\mu}\nabla_yF(x^k,y^k)\,+\frac{L}{L+\mu}\nabla_y F(\tilde{x}^{k+1},\tilde{y}^{k+1})\right)
	\end{cases}\qquad\qquad\qquad\qquad\qquad$$
	where $\eta_1 = \cO\left(\frac{\mu}{L^2}\right)$, $\eta_2=\cO\left(\frac{1}{L}\right),\eta_3 = \cO\left(\frac{1}{L}\right)$. The iterative points $(\tilde x^{k+1},\tilde y^{k+1})$ in \emph{(ADE)} are the same as that in \emph{(EG)}, except that $\eta_2$ is replaced by $\eta_3$.
\end{example}
The original update of (ADE) algorithm is rather complex since it involves the handling of constraints. In the unconstrained case, it can be simplified to the current form, which is a mixture of (GDA) and (EG). The corresponding iteration complexity bounds are $\cO\left(\frac{L^2}{\mu^2}\ln\left(\frac{1}{\epsilon}\right)\right)$ for (GDA), and  $\cO\left(\frac{L}{\mu}\ln\left(\frac{1}{\epsilon}\right)\right)$ for both (EG) and (ADE).

\subsection{General deterministic algorithm classes without linear span structure}
Although all the reviewed first-order methods satisfy the \emph{linear span} property in the proximal algorithm class in Definition \ref{defn:Prox-AlgoClass} and the pure first-order algorithm class in Definition \ref{defn:Pure-AlgoClass}, this does not exclude the possibility of the deriving an algorithm that does not satisfy the linear span property. Therefore, we also define the general deterministic proximal algorithm class and the general deterministic pure first-order algorithm class as follows, whose iteration complexity lower bound can be generalized from their linear span counterpart through the technique of \emph{adversary rotation}. 
\begin{definition}[General proximal algorithm class]
	\label{defn:Det-Prox-AlgoClass}
	Consider the problem \eqref{prob:bilinear} in the problem class $\cB(L_{xy},\mu_x,\mu_y)$, denote $\theta = (L_{xy},\mu_x,\mu_y)$ as the corresponding problem parameters. Let algorithm $\cA$ belong to the general  proximal algorithm class. Then $\cA$ consists of a sequence of deterministic mappings $\{(\cA_x^1, \cA_y^1,\cA_u^1,\cA_v^1),(\cA_x^2, \cA_y^2,\cA_u^2,\cA_v^2)...\}$ such that the iterate sequence $\{(x^{k},y^{k})\}_{k=0,1,...}$ and the output sequence $\{(\tilde x^{k},\tilde y^{k})\}_{k=0,1,...}$ are generated by 
	\begin{equation}
	\label{eqn:Det-Prox-AlgoClass}\begin{cases}
	(x^k,\tilde x^k) := \cA_x^k\left(\theta; x^0,Ay^0,...,x^{k-1},Ay^{k-1}; \mathbf{prox}_{\gamma_k f}(u^k) \right),\\
	(y^k,\tilde y^k) := \cA_y^k\left(\theta; y^0,A^\top x^0,...,y^{k-1},A^\top x^{k-1}; \mathbf{prox}_{\sigma_k g}(v^k) \right),
	\end{cases}
	\end{equation} 
	where $u^k = \cA^k_u(\theta; x^0,Ay^0,...,x^{k-1},Ay^{k-1})$, $v^k = \cA^k_v(\theta; y^0,A^\top x^0,...,y^{k-1},A^\top x^{k-1})$, and 
	$(x^0,y^0)$ is any given initial solution. 
\end{definition}
One remark is that the input of the proximal mapping $\mathbf{prox}_{\gamma_kf}(\cdot)$ is  constructed with other inputs to the $\cA_x^k$, i.e., there could be another deterministic mapping $\cA_u^k$ to generate a vector  $u^k = \cA^k_u(\theta; x^0,Ay^0,...,x^{k-1},Ay^{k-1})$ and then $\mathbf{prox}_{\gamma_kf}(u^k)$ is passed to the mapping $\cA_x^k$. This $\cA^k_u$ does not need to be linear. The situation for $v^k$ and $\cA^k_v$ is similar. Similar to the general proximal algorithm class, the general pure first-order algorithm class is defined as follows. 
\begin{definition}[General pure first-order algorithm class]
	\label{defn:Det-Pure-AlgoClass}
	Consider the problem \eqref{prob:min-max} in the problem class $\cF(L_x, L_y, L_{xy},\mu_x,\mu_y)$, denote $\theta = (L_x, L_y, L_{xy},\mu_x,\mu_y)$ as the corresponding problem parameters. Let algorithm $\cA$ belong to the general  pure first-order algorithm class. Then $\cA$ consists of a sequence of deterministic mappings $\{\cA_x^1, \cA_y^1, \cA_x^2, \cA_y^2,...\}$ such that the iterate sequence $\{(x^{k},y^{k})\}_{k=0,1,...}$ and the output sequence $\{(\tilde x^{k},\tilde y^{k})\}_{k=0,1,...}$ are generated by 
	\begin{equation}
	\label{eqn:Det-Pure-AlgoClass}\begin{cases}
	(x^k,\tilde x^k) := \cA_x^k\left(\theta; x^0,\nabla_x F(x^0,y^0),...,x^{k-1},\nabla_x F(x^{k-1},y^{k-1}) \right),\\
	(y^k,\tilde y^k) := \cA_y^k\left(\theta; y^0,\nabla_y F(x^0,y^0),...,y^{k-1},\nabla_y F(x^{k-1},y^{k-1})\right),
	\end{cases}
	\end{equation}
	given any initial solution $(x^0,y^0)$.
\end{definition}
A remark is that the gradients $\nabla_x F(\cdot,\cdot)$ and $\nabla_y F(\cdot,\cdot)$ actually do not need to be taken on the previous iterates $\{(x^0,y^0),(x^1,y^1),...,(x^{k-1},y^{k-1})\}$. Similar to the proximal case, they can also be taken on some other $\{(u^0_{k-1},v^0_{k-1}),(u^1_{k-1},v^1_{k-1}),...,(u^{k-1}_{k-1},v^{k-1}_{k-1})\}$ that are generated by some mappings $\{(\cA_u^{k-1,0},\cA_v^{k-1,0}),(\cA_u^{k-1,1},\cA_v^{k-1,1}),...,(\cA_u^{k-1,k-1},\cA_v^{k-1,k-1})\}$. The reason that we do not consider this more general form is twofold. First, the simpler form in Definition \ref{defn:Det-Pure-AlgoClass} has already covered all the discussed algorithms. Second, this more general form actually shares the same iteration complexity lower bound, despite the technical complications involved. Therefore, in this paper we shall only include the gradients at the past iterates as the input to the algorithm. 

\section{Lower bound for proximal  algorithms}\label{sec:lower-bound-proximal}
\subsection{The worst-case instance}

Let us construct the following bilinearly coupled min-max saddle point problem:
\begin{equation}
\label{prob:worst-proximal}
\min_x\max_y F(x,y):=\frac{\mu_x}{2}\|x\|^2 + \frac{L_{xy}}{2}x^\top Ay - \frac{\mu_y}{2}\|y\|^2 - b^\top y
\end{equation}
where $b$ is a vector to be determined later, and the coupling matrix $A$ (hence $A^2$ and $A^4$) is defined as follows: 
\begin{equation}
\label{defn:A}
A = \begin{pmatrix}
& &    &     & 1\\
& &    &  1 & -1\\
& & 1 & -1&\\
& \udots &\udots & & \\
1& -1 & & &
\end{pmatrix}, \,\,
A^2 = \begin{pmatrix}
1   & -1 &            &               & \\
-1 & 2   & -1        &              & \\
& -1 &\ddots & \ddots & \\
&      &\ddots & 2           &-1\\
&      &              & -1         & 2
\end{pmatrix},
\,\,
A^4 = \begin{pmatrix}
2   & -3 &     1      &               &  & &\\
-3 & 6   & -4        &     1         & & & \\
1& -4 &6 & -4 &1 \\
&  \ddots     &\ddots & \ddots  &\ddots & \ddots & \\
& & 1& -4 &6 & -4 &1\\
& & & 1& -4 &6 & -4 \\
& & & & 1&-4&5
\end{pmatrix}.
\end{equation}
Note that $A^\top =A$ and $\|A\|_2 \leq 2$. Therefore \eqref{prob:worst-proximal} is an instance in the problem class $\cB(L_{xy},\mu_x,\mu_y)$. It is worth noting that the example \eqref{prob:worst-proximal} is the same as that in Proposition 2 of \cite{LowB-Quadratic}, which is a parallel work focused on pure first-order algorithm class. Here, we use the same example to elaborate the lower bound of the proximal methods over the general bilinear coupling class $\cB(L_{xy},\mu_x,\mu_y)$, as well as a warmup for the discussion of more complex problem class $\cF(L_{x},L_{y},L_{xy},\mu_x,\mu_y)$. 

Denote $e_i$ to be the $i$-th unit vector, which has $1$ at the $i$-th component and 0 elsewhere. Then by direct calculation, one can check that $A^2$ satisfies the following \emph{zero-chain property} (see Chapter 2 of  \cite{nesterov2018lectures}).

\begin{proposition}[Zero-chain property]
	\label{prop:zero-chain}
	For any vector $v\in\R^n$, if $v\in\mathrm{Span}\{e_i:i\leq k\}$ for some $1\leq k\leq n-1$,  then $A^2v\in\mathrm{Span}\{e_i:i\leq k+1\}$.
\end{proposition}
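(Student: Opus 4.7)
The plan is to reduce the statement to a one-line calculation using the explicit tridiagonal form of $A^2$ given in \eqref{defn:A}. Since $A^2$ is given to be tridiagonal with entries $(A^2)_{i,i-1}=(A^2)_{i,i+1}=-1$, $(A^2)_{1,1}=1$, $(A^2)_{i,i}=2$ for $i\geq 2$, the $i$-th column of $A^2$ is supported on rows $\{i-1,i,i+1\}\cap\{1,\dots,n\}$. Concretely,
\begin{equation*}
A^2 e_1 = e_1 - e_2,\qquad A^2 e_i = -e_{i-1} + 2e_i - e_{i+1} \,\,\text{ for } 1<i<n,\qquad A^2 e_n = -e_{n-1}+2e_n.
\end{equation*}
In every case $A^2 e_i \in \mathrm{Span}\{e_j : j\leq i+1\}$.

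Next I would invoke linearity. Writing $v=\sum_{i=1}^{k} v_i e_i$, one has
\begin{equation*}
A^2 v \,=\, \sum_{i=1}^{k} v_i\, A^2 e_i \,\in\, \sum_{i=1}^{k} \mathrm{Span}\{e_j : j\leq i+1\} \,\subseteq\, \mathrm{Span}\{e_j : j\leq k+1\},
\end{equation*}
which is precisely the zero-chain property. The condition $k\leq n-1$ ensures that $k+1\leq n$, so the span on the right is well-defined inside $\R^n$.

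The only thing that is not purely formal is verifying that the explicit form of $A^2$ in \eqref{defn:A} is correct, i.e., that squaring the antidiagonal $\pm 1$ matrix $A$ indeed produces the discrete Laplacian shown. I would do this by a short index chase: $A_{ij}=1$ when $j=n+1-i$ and $A_{ij}=-1$ when $j=n+2-i$ (with $i\geq 2$), so $(A^2)_{ij}=\sum_\ell A_{i\ell}A_{\ell j}$ collapses to a sum of at most two nonzero terms, which evaluates to $2$ on the diagonal (except the top-left entry, which is $1$ because the $-1$ row is missing), and to $-1$ immediately above and below the diagonal. No other obstacles arise; the statement is essentially a direct structural observation about $A^2$, and no deeper property of $A$ beyond its tridiagonal square is needed at this stage.
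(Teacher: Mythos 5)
Your proof is correct and matches the paper's intent: the paper gives no explicit proof, simply asserting the property follows "by direct calculation" from the tridiagonal form of $A^2$, and your column-support-plus-linearity argument is exactly that calculation spelled out. Your index chase verifying $A^2$ from the antidiagonal $A$ (with $A_{ij}=1$ at $j=n+1-i$ and $A_{ij}=-1$ at $j=n+2-i$, $i\geq 2$) is also correct and a sensible thing to check.
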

This means that if $v$ only has nonzero elements at the first $k$ entries, then $A^2v$ will have at most one more nonzero entry at the $(k+1)$-th position.

For problem \eqref{prob:worst-proximal}, the proximal operators in \eqref{eqn:Prox-AlgoClass} can be written explicitly:
\begin{eqnarray}
\label{defn:prox-closedForm-x}
\prox_{\gamma_i f}(\hat{x}_i-\gamma_iA\tilde{y}_i) & = & \argmin_x \frac{\mu_x}{2}\|x\|^2 + \frac{1}{2\gamma_i}\left\|x-(\hat{x}_i-\frac{\gamma_iL_{xy}}{2}A\tilde{y}_i)\right\|^2\\
& = & \frac{1}{1+\gamma_i\mu_x}\hat{x}_i - \frac{\gamma_iL_{xy}}{2(1+\gamma_i\mu_x)}A\tilde{y}_i\nonumber\\
& \in & \mathrm{Span}\{\hat{x}_i,A\tilde{y}_i\}.\nonumber
\end{eqnarray}
Similarly, for the $y$ block, we also have
\begin{eqnarray}
\label{defn:prox-closedForm-y}
\prox_{\sigma_i g}(\hat{y}_i+\sigma_iA^\top\tilde{x}_i)
=  \frac{\hat{y}_i-\sigma_i b}{1+\sigma_i\mu_y} +  \frac{\sigma_iL_{xy}}{2(1+\sigma_i\mu_x)}A\tilde{x}_i \in \mathrm{Span}\{\hat{y}_i,A\tilde{x}_i,b\}.
\end{eqnarray}
Let us assume the initial point to be $x^0 = y^0 = 0$ ($\cH_x^0 = \cH_y^0 = \{0\}$) without loss of generality.
Directly substituting \eqref{defn:prox-closedForm-x} and \ref{defn:prox-closedForm-y} into Definition \eqref{defn:Prox-AlgoClass} yields
\begin{equation*}
\begin{cases}
\cH_x^1 \subseteq \mathrm{Span}\{0\}\\
\cH_y^1 \subseteq \mathrm{Span}\{b\}
\end{cases}
\quad
\begin{cases}
\cH_x^2 \subseteq \mathrm{Span}\{Ab\}\\
\cH_y^2 \subseteq \mathrm{Span}\{b\}
\end{cases}
\quad
\begin{cases}
\cH_x^3 \subseteq \mathrm{Span}\{Ab\}\\
\cH_y^3 \subseteq \mathrm{Span}\{b,A^2b\}
\end{cases}
\quad
\begin{cases}
\cH_x^4 \subseteq \mathrm{Span}\{Ab, A^3b\}\\
\cH_y^4 \subseteq \mathrm{Span}\{b,A^2b\}
\end{cases}
\ldots
\end{equation*}
We formally summarize this observation below: 
\begin{lemma}
	\label{lemma:subspace-prox}
	For problem \eqref{prob:worst-proximal}, for any $k \in \mathbb{N}$, if the iterates are generated so that  $(x_k,y_k)\in\cH_x^k\times\cH_y^k$, with $\cH_x^k$ and $\cH_y^k$ defined by \eqref{defn:Prox-AlgoClass}, then based on \eqref{defn:prox-closedForm-x} and \eqref{defn:prox-closedForm-y} the search subspaces satisfy
	$$\cH_x^k \subseteq \begin{cases}
	\{0\}, &  k = 1\\
	\mathrm{Span}\left\{A^{2i}(Ab): 0\leq i\leq \left\lfloor \frac{k}{2}\right\rfloor -1 \right\}, & k\geq 2
	\end{cases} \quad \mbox{ and }\quad
	\cH_y^k \subseteq \mathrm{Span}\left\{A^{2i}b: 0\leq i\leq \left\lceil \frac{k}{2}\right\rceil -1 \right\}.$$
\end{lemma}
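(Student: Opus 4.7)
The plan is to prove the lemma by induction on $k$, leveraging the explicit closed-form expressions \eqref{defn:prox-closedForm-x} and \eqref{defn:prox-closedForm-y} derived just above the lemma. These expressions tell us that every proximal step must lie in a span that depends only on the previous iterates, multiplied by at most one additional factor of $A$. So the argument reduces to carefully counting how many factors of $A$ can appear on $b$ after $k$ iterations.

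\textbf{Base cases.} For $k=0$, both $\cH_x^0$ and $\cH_y^0$ equal $\{0\}$ by the initialization $x^0=y^0=0$, matching the formulas with the convention that an empty-index span is $\{0\}$. For $k=1$, I would plug in $\hat x^0=0$ and $\tilde y^0=0$ into \eqref{defn:prox-closedForm-x} to obtain $0$, so $\cH_x^1\subseteq\{0\}$, while \eqref{defn:prox-closedForm-y} with $\hat y^0=0,\tilde x^0=0$ produces the term proportional to $b$, so $\cH_y^1\subseteq\mathrm{Span}\{b\}=\mathrm{Span}\{A^0 b\}$, which agrees with $\lceil 1/2\rceil-1=0$.

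\textbf{Inductive step.} Assume both inclusions hold for all indices up to $k$. To bound $\cH_x^{k+1}$, I apply \eqref{defn:prox-closedForm-x}: any new element is a linear combination of some $\hat x^i\in\cH_x^i$ and $A\tilde y^i$ with $\tilde y^i\in\cH_y^i$, for $i\le k$. By the inductive hypothesis, $\hat x^i$ is already in the target span $\mathrm{Span}\{A^{2j}(Ab):0\le j\le\lfloor k/2\rfloor-1\}$ (since $\lfloor i/2\rfloor\le\lfloor k/2\rfloor$ for $i\le k$). Meanwhile $A\tilde y^i\in\mathrm{Span}\{A^{2j+1}b:0\le j\le\lceil i/2\rceil-1\}=\mathrm{Span}\{A^{2j}(Ab):0\le j\le\lceil i/2\rceil-1\}$. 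Splitting into the parities of $k$, a direct check shows $\lceil i/2\rceil-1\le\lfloor(k+1)/2\rfloor-1$ for all $i\le k$, so the new subspace stays inside the claimed one. The argument for $\cH_y^{k+1}$ is analogous, using \eqref{defn:prox-closedForm-y}: every new element lies in $\mathrm{Span}\{\hat y^i,A\tilde x^i,b\}$, and a parity check of $\lfloor i/2\rfloor$ against $\lceil(k+1)/2\rceil-1$ shows the span remains inside the claimed one, with $b=A^0 b$ accounted for by the $j=0$ term.

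\textbf{Expected obstacle.} The content of the proof is routine once the proximal formulas are in hand, but the bookkeeping is error-prone because the $x$-subspace grows by odd powers of $A$ applied to $b$ while the $y$-subspace grows by even powers, and the recursion \eqref{eqn:Prox-AlgoClass} couples them. The main care required is to verify, for each parity of $k$, that the two index bounds $\lfloor k/2\rfloor$ and $\lceil k/2\rceil$ update consistently under the inductive step, so that the maximum power of $A$ actually appearing is at most the one prescribed by the lemma. I would structure the induction by splitting into ``$k$ even'' and ``$k$ odd'' cases explicitly to make this counting transparent. The zero-chain property (Proposition~\ref{prop:zero-chain}) is not needed for this lemma itself, only later when translating the subspace containment into a concrete lower bound on $\|\tilde x^k-x^*\|^2+\|\tilde y^k-y^*\|^2$.
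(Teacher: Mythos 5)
Your proof is correct and takes essentially the same approach as the paper: the paper derives the closed-form proximal operators \eqref{defn:prox-closedForm-x}--\eqref{defn:prox-closedForm-y}, computes $\cH^k_x, \cH^k_y$ for the first few $k$, and then "formally summarizes" the observed pattern into the lemma; your induction on $k$ is the rigorous version of exactly that pattern-counting argument, and your parity bookkeeping (using $\lceil k/2\rceil-1\le\lfloor(k+1)/2\rfloor-1$ for the $x$-block and $\lfloor k/2\rfloor\le\lceil(k+1)/2\rceil-1$ for the $y$-block, both of which hold with equality) is the right way to make it airtight.
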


\subsection{Lower bounding the duality gap}
Let us lower bound the dual gap, which is upper bounded by the whole duality gap. To achieve this, let us first write down the dual function of problem \eqref{prob:worst-proximal} as
\begin{equation}
\label{defn:dual-function-prox}
\Psi(y) = \min_{x} F(x,y) = -\frac{1}{2}y^\top \left(\frac{L_{xy}^2}{4\mu_x}\cdot A^2 + \mu_y\cdot I\right)y - b^\top y . 
\end{equation}
For this $\mu_y$-strongly concave dual function, we can characterize the optimal solution $y^*$ directly by its KKT condition $\nabla \Psi(y^*) = 0$. However, the exact solution $y^*$ does not have a simple and clear form, so we choose to  characterize it by an approximate solution $\hat y^*$.

\begin{lemma}[Approximate optimal solution]
	\label{lemma:apxsolu-proximal}
	Let us assign the value of $b$ as $b := -\frac{L_{xy}^2}{4\mu_x}e_1$. Denote $\alpha := \frac{4\mu_x\mu_y}{L_{xy}^2}$, and let $q = \half\left((2+\alpha) - \sqrt{(2+\alpha)^2 - 4}\right)\in(0,1)$
	be the smallest root of the quadratic equation $1 - (2+\alpha)q + q^2 = 0$. Then, an approximate optimal solution $\hat y^*$ can be constructed as
	\begin{equation}
	\hat y^*_{i} = \frac{q^i}{1-q} \quad\mbox{ for } \quad i = 1,2,...,n.
	\end{equation}
	The approximation error can be bounded by
	\begin{equation}
	\label{eqn:apxsolu-proximal}
	\|\hat y^* - y^*\| \leq  \frac{q^{n+1}}{\alpha(1-q)},
	\end{equation}
	where $\hat y^*_i$ is the $i$-th element of $\hat y^*$. Note that $q<1$ and the lower bound is dimension-independent, hence we are free to choose $n$ to make the approximation error arbitrarily small.
\end{lemma}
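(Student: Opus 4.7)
The plan is to solve the KKT system for the dual optimum explicitly as a tridiagonal linear system, guess an approximate solution from the associated linear recurrence, and then control the residual using the spectrum of $A^2 + \alpha I$.

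First, since $\Psi$ is $\mu_y$-strongly concave, its unique maximizer $y^*$ is characterized by $\nabla\Psi(y^*) = 0$, which in view of \eqref{defn:dual-function-prox} reads $\bigl(\tfrac{L_{xy}^2}{4\mu_x}A^2 + \mu_y I\bigr) y^* = -b$. Substituting $b = -\tfrac{L_{xy}^2}{4\mu_x}e_1$ and dividing through by $\tfrac{L_{xy}^2}{4\mu_x}$, this becomes
\[
(A^2 + \alpha I)\, y^* \;=\; e_1.
\]
From the definition of $A^2$ in \eqref{defn:A}, the matrix $A^2 + \alpha I$ is tridiagonal with diagonal $(1+\alpha,\,2+\alpha,\,\ldots,\,2+\alpha)$ and off-diagonals $-1$, so interior rows $2\le i\le n-1$ enforce the constant-coefficient recurrence $-y^*_{i-1} + (2+\alpha)y^*_i - y^*_{i+1} = 0$.

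Second, I would read off the characteristic equation $q^2 - (2+\alpha)q + 1 = 0$ of this recurrence. Its product of roots is $1$ and its discriminant $\alpha^2 + 4\alpha$ is positive for $\alpha>0$, so the smaller root $q$ lies in $(0,1)$. This motivates the geometric ansatz $\hat y^*_i = q^i/(1-q)$. The interior rows then vanish identically by the characteristic equation, and it suffices to compute the two boundary rows. Using the identity $q^2 = (2+\alpha)q - 1$, row $1$ evaluates as
\[
(1+\alpha)\hat y^*_1 - \hat y^*_2 \;=\; \frac{(1+\alpha)q - q^2}{1-q} \;=\; \frac{1-q}{1-q} \;=\; 1,
\]
while row $n$ gives
\[
-\hat y^*_{n-1} + (2+\alpha)\hat y^*_n \;=\; \frac{q^{n-1}\bigl(-1 + (2+\alpha)q\bigr)}{1-q} \;=\; \frac{q^{n+1}}{1-q}.
\]
Consequently $(A^2 + \alpha I)\,\hat y^* = e_1 + \tfrac{q^{n+1}}{1-q}\, e_n$.

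Third, subtracting the two linear systems yields $(A^2+\alpha I)(\hat y^* - y^*) = \tfrac{q^{n+1}}{1-q} e_n$. Since $A^2 \succeq 0$, we have $A^2 + \alpha I \succeq \alpha I$, hence $\|(A^2+\alpha I)^{-1}\|_2 \le 1/\alpha$, and taking norms gives exactly the claimed bound $\|\hat y^*-y^*\| \le q^{n+1}/\bigl(\alpha(1-q)\bigr)$.

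The argument is essentially mechanical once the KKT system is recognized as a constant-coefficient linear recurrence. The main (and only) point requiring care is picking the correct root: because the product of the roots of $q^2-(2+\alpha)q+1 = 0$ equals $1$, one root lies in $(0,1)$ and the other in $(1,\infty)$, and only the smaller one produces a decaying residual $q^{n+1}\to 0$; the other choice would give an ``approximation'' that blows up with $n$. I do not anticipate any further technical obstacle.
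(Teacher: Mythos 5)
Your proof is correct and follows essentially the same route as the paper: reduce the KKT system to $(A^2+\alpha I)y^*=e_1$, verify the geometric ansatz satisfies all but the last row, and bound the residual via $A^2+\alpha I\succeq \alpha I$. The only difference is that you make the recurrence/characteristic-equation reasoning explicit, which the paper leaves as ``direct calculation.''
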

\begin{proof}
	First, let us substitute the value of $b$ into the KKT system $\nabla \Psi(y^*)=0$, by slight rearranging and scaling the terms, we get
	\begin{equation*}
	\left(A^2 + \frac{4\mu_x\mu_y}{L_{xy}^2} I\right)y^* = - \frac{4\mu_x}{L_{xy}^2}b.
	\end{equation*}
	Using the definition of $\alpha$ and $b$, the equation becomes
	$$(A^2+\alpha I)y^* = e_1.$$
	Substituting the formula of $A^2$ in \eqref{defn:A}, we expand the above equation as
	\begin{eqnarray}
	\label{defn:KKT-proximal}
	\begin{cases}
	(1+\alpha)y_1^* - y_2^* = 1\\
	-y_1^* + (2+\alpha)y_2^* - y_3^* = 0\\
	\qquad\qquad\quad\vdots\\
	-y_{n-2}^* + (2+\alpha)y_{n-1}^* - y_n^* = 0\\
	-y_{n-1}^* + (2+\alpha)y_n^* = 0.
	\end{cases}
	\end{eqnarray}
	By direct calculation, we can check that $\hat y^*$ satisfies the first $n-1$ equations of the KKT system \eqref{defn:KKT-proximal}. The last equation, however, is violated, but with a residual of size $q^{n+1}/(1-q)$.  In details,
	$$\begin{cases}
	(A^2 + \alpha\cdot I)\hat y^* = e_1 + \frac{q^{n+1}}{1-q}\cdot e_n\\
	(A^2 + \alpha\cdot I)y^* = e_1.
	\end{cases}$$
	This indicates that $\hat y^* - y^* = \frac{q^{n+1}}{1-q}\cdot(A^2 + \alpha I)^{-1} e_n$. Note that $\alpha^{-1}I\succeq (A^2 + \alpha I)^{-1}\succ 0$, we have the approximation error bounded by \eqref{eqn:apxsolu-proximal}.
\end{proof}

 Note that in Lemma \ref{lemma:apxsolu-proximal}, we have chosen  $b\propto e_1$. By the zero-chain property in Proposition \ref{prop:zero-chain} and Lemma \ref{lemma:subspace-prox},  we can verify that the subspaces $\cH_y^{2k-1}$ and $\cH_y^{2k}$ satisfy
\begin{equation}
\label{eqn:subspace-prox}
\cH_y^{2k-1}, \cH_y^{2k} \subseteq \mathrm{Span}\{b,A^2b,...,A^{2(k-1)}b\} =\mathrm{Span}\{e_1, e_2, ...,e_k\}.
\end{equation}
This implies that for both $y^{2k}$ and $y^{2k-1}$, the only possible nonzero elements are the first $k$ ones, which again implies that the lower bound of $\|y^{2k}-y^*\|^2$ and $\|y^{2k-1}-y^*\|^2$ will be similar. For simplicity, we only discuss this lower bound  for $y^{2k}$. The counterpart for $y^{2k-1}$ can be obtained in a similar way.  Therefore, we have the following estimations.

\begin{lemma}
	\label{lemma:lowerbound-distance-prox}
	Assume $k\leq \frac{n}{2}$ and $n\geq2\log_q\left(\frac{\alpha}{4\sqrt{2}}\right)$. Then
	\begin{equation}
	\label{eqn:lowB-dis-prox}
	\|y^{2k}-y^*\|^2\geq \frac{q^{2k}}{16}\|y^0-y^*\|^2
	\end{equation}
	where $y^0 = 0$ is the initial solution.
\end{lemma}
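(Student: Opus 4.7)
The plan is to exploit the span constraint from Lemma \ref{lemma:subspace-prox} to lower bound $\|y^{2k}-y^*\|$ by the tail of $y^*$ beyond coordinate $k$, and then evaluate that tail explicitly via the geometric sequence $\hat y^*$ from Lemma \ref{lemma:apxsolu-proximal}. The hypothesis $n\geq 2\log_q(\alpha/(4\sqrt 2))$ is calibrated precisely to absorb the approximation error $\epsilon:=\|\hat y^*-y^*\|\leq q^{n+1}/(\alpha(1-q))$.

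First I would combine Lemma \ref{lemma:subspace-prox} with the zero-chain property (Proposition \ref{prop:zero-chain}), applied to $b=-\frac{L_{xy}^2}{4\mu_x}e_1$, to obtain \eqref{eqn:subspace-prox}: $\cH_y^{2k}\subseteq\mathrm{Span}\{e_1,\ldots,e_k\}$. Letting $P$ denote orthogonal projection onto $\mathrm{Span}\{e_{k+1},\ldots,e_n\}$, one has $Py^{2k}=0$ and thus $\|y^{2k}-y^*\|\geq\|Py^*\|$, while $y^0=0$ gives $\|y^0-y^*\|=\|y^*\|$. By the reverse triangle inequality and Lemma \ref{lemma:apxsolu-proximal}, I then get $\|Py^*\|\geq\|P\hat y^*\|-\epsilon$ and $\|y^*\|\leq\|\hat y^*\|+\epsilon$.

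Next I would evaluate the two geometric sums explicitly using $\hat y^*_i=q^i/(1-q)$:
\[ \|\hat y^*\|^2=\frac{q^2(1-q^{2n})}{(1-q)^2(1-q^2)},\qquad \|P\hat y^*\|^2=q^{2k}\,\frac{1-q^{2(n-k)}}{1-q^{2n}}\,\|\hat y^*\|^2. \]
The hypothesis $k\leq n/2$ forces $2(n-k)\geq n$, so $(1-q^{2(n-k)})/(1-q^{2n})\geq 1-q^n$, yielding $\|P\hat y^*\|\geq q^k\sqrt{1-q^n}\,\|\hat y^*\|$. In addition, from $\|\hat y^*\|\geq q/(1-q)$ (the single $i=1$ term) and the bound on $\epsilon$, I would read off $\epsilon/\|\hat y^*\|\leq q^n/\alpha$.

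To finish, I would translate the hypothesis on $n$ into $q^n\leq\alpha^2/32$ (valid since $q\in(0,1)$), which simultaneously (i) keeps $\sqrt{1-q^n}$ bounded below by an absolute constant and (ii) forces $\epsilon/\|\hat y^*\|\leq\alpha/32$ to be small. Chaining
\[ \|y^{2k}-y^*\|\;\geq\; q^k\sqrt{1-q^n}\,\|\hat y^*\|-\epsilon \;\geq\; \tfrac{q^k}{4}\bigl(\|\hat y^*\|+\epsilon\bigr) \;\geq\; \tfrac{q^k}{4}\|y^0-y^*\| \]
and squaring yields \eqref{eqn:lowB-dis-prox}. The main obstacle is the arithmetic calibration at the middle inequality: one must verify that with $q^n\leq\alpha^2/32$ and $q^k\leq 1$, the specific constant $1/4$ (rather than some smaller number) survives both the truncation loss $\sqrt{1-q^n}$ and the slack $\epsilon$. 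The denominator $4\sqrt 2$ inside the $\log_q$ in the hypothesis is precisely what is needed to make this single constant work uniformly; the final $1/16$ in the statement is then simply its square.
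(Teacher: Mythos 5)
Your overall strategy is the same as the paper's: restrict to the $\mathrm{Span}\{e_1,\dots,e_k\}$ subspace, lower bound the distance to $\hat y^*$ by the geometric tail, and then absorb the approximation error $\|\hat y^*-y^*\|$ using the hypothesis on $n$. However there is a concrete gap in your calibration step. You bound the tail ratio by $\frac{1-q^{2(n-k)}}{1-q^{2n}}\geq 1-q^n$ and then claim that the hypothesis $n\geq 2\log_q(\alpha/(4\sqrt 2))$, translated to $q^n\leq\alpha^2/32$, ``keeps $\sqrt{1-q^n}$ bounded below by an absolute constant.'' This fails whenever $\alpha\geq 4\sqrt 2$: in that regime $\alpha/(4\sqrt 2)\geq 1$, so $\log_q(\alpha/(4\sqrt 2))\leq 0$ and the hypothesis on $n$ is vacuous, while your translated inequality $q^n\leq\alpha^2/32$ becomes the useless statement $q^n\leq(\text{something}\geq 1)$. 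Even in the nominally covered regime, the bound $\sqrt{1-\alpha^2/32}$ tends to $0$ as $\alpha\uparrow 4\sqrt 2$, so your middle inequality in the chain cannot yield the fixed constant $1/4$ uniformly.

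The fix is to use the sharper unconditional estimate
\begin{equation*}
\frac{1-q^{2(n-k)}}{1-q^{2n}}\;\geq\;\frac{1-q^n}{(1-q^n)(1+q^n)}\;=\;\frac{1}{1+q^n}\;\geq\;\frac12
\end{equation*}
valid for all $q\in(0,1)$ as soon as $k\leq n/2$, which gives $\|P\hat y^*\|\geq \frac{q^k}{\sqrt 2}\|\hat y^*\|$ without any appeal to the $n$-hypothesis (this is exactly the paper's $\tfrac{1}{\sqrt2}$ factor). The hypothesis on $n$ should then be reserved solely for controlling the residual $\epsilon=\|\hat y^*-y^*\|$: one writes $\epsilon\leq\frac{q^{n/2}}{\alpha}\cdot q^k\cdot\frac{q}{1-q}\leq\frac{1}{4}\cdot\frac{q^k}{\sqrt 2}\|\hat y^*\|$, where the needed bound $q^{n/2}/\alpha\leq 1/(4\sqrt2)$ holds for all $\alpha$ --- either because $q^{n/2}\leq\alpha/(4\sqrt2)$ when $\alpha<4\sqrt 2$, or trivially since $q^{n/2}\leq 1\leq\alpha/(4\sqrt 2)$ when $\alpha\geq 4\sqrt 2$. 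With those two pieces in place, the rest of your chain (reverse triangle inequality, $\|y^0-y^*\|\leq\|\hat y^*\|+\epsilon\leq 2\|\hat y^*\|$, and squaring) closes cleanly to the stated $\tfrac{q^{2k}}{16}$.
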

\begin{proof}
	By the subspace characterization \eqref{eqn:subspace-prox}, we have
	$$\|y^{2k}-\hat y^*\| \geq \sqrt{\sum^n_{j = k+1} (\hat y^*_j)^2} = \frac{q^{k}}{1-q}\sqrt{q^2 + q^4 + \cdots + q^{2(n-k)}}\geq \frac{q^k}{\sqrt{2}}\|\hat y^*\|= \frac{q^k}{\sqrt{2}}\|y^0-\hat y^*\|,$$
	where the last inequality is due to the fact that $q< 1$, $k\leq \frac{n}{2}$, and $y^0 = 0$.
 If we choose $n$ to be large enough, then $\hat y^*$ and $y^*$ can be made arbitrarily close to each other. Hence we can transform the above inequality to  \eqref{eqn:lowB-dis-prox}. More details of this derivation can be found in Appendix \ref{appendix-lowerbound-distance-prox}.
\end{proof}

Using Lemma \ref{lemma:lowerbound-distance-prox} and \eqref{defn:primal-gap-LB}, it is then straightforward to lower bound the duality gap by
$$\Delta(x^{2k},y^{2k})\geq q^{2k}\cdot\frac{\mu_y\|y^*-y^0\|^2}{32}.$$
Summarizing, below we present our first main result.
\begin{theorem}
	\label{thm:proximal}
 Let the positive parameters $\mu_x,\mu_y>0$ and $L_{xy}> 0$ be given. For any integer $k$, there exists a problem instance from $\cB(L_{xy},\mu_x,\mu_y)$ of form \eqref{prob:worst-proximal}, with $n\geq \max\left\{2\log_q\left(\frac{\mu_x\mu_y}{\sqrt{2}L_{xy}^2}\right), 4k\right\}$, where $A\in\mathbb{R}^{n\times n}$ as defined in \eqref{defn:A}, and $b = -\frac{L_{xy}^2}{4\mu_x}e_1$. For such a problem instance, any approximate solution $(\tilde x^k, \tilde y^k)\in\cH_x^k\times\cH_y^k$ generated by the proximal algorithm class under the linear span assumption \eqref{eqn:Prox-AlgoClass}  satisfies
	\begin{eqnarray}
	\max_y F(\tilde x^{k},y) - \min_{x} F(x, \tilde y^k)\geq q^{k}\cdot\frac{\mu_y\|y^*-y^0\|^2}{32} \quad\mbox{ and }\quad \|\tilde y^k-y^*\|^2\geq q^{k}\cdot\frac{\|y^*-y^0\|^2}{16},
	\end{eqnarray}
	where $q = 1+\frac{2\mu_x\mu_y}{L_{xy}^2} - 2\sqrt{\left(\frac{\mu_x\mu_y}{L_{xy}^2}\right)^2 + \frac{\mu_x\mu_y}{L_{xy}^2}}$.
\end{theorem}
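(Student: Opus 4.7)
The plan is to chain the structural Lemmas \ref{lemma:subspace-prox}--\ref{lemma:lowerbound-distance-prox} already in place to produce a lower bound on $\|\tilde y^k - y^*\|$, and then convert this distance bound into the duality gap bound via \eqref{defn:primal-gap-LB}. The whole argument rests on two observations: first, the zero-chain structure forces any iterate produced by the proximal algorithm class under the linear span assumption to have nonzero coordinates only in the first $\lceil k/2\rceil$ positions; second, the approximate optimizer $\hat y^*$ of Lemma \ref{lemma:apxsolu-proximal} has a geometrically decaying tail in precisely those coordinates that are out of reach.

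Concretely, I would first combine Lemma \ref{lemma:subspace-prox} with the zero-chain property of $A^2$ (Proposition \ref{prop:zero-chain}), applied to the chosen $b = -\frac{L_{xy}^2}{4\mu_x}e_1$, to conclude that $\cH_y^k \subseteq \mathrm{Span}\{e_1,\ldots,e_{\lceil k/2\rceil}\}$. Since $\hat y^*_j = q^j/(1-q)$, this forces
\[
\|\tilde y^k - \hat y^*\|^2 \;\geq\; \sum_{j = \lceil k/2\rceil + 1}^{n}(\hat y^*_j)^2 \;=\; \frac{1}{(1-q)^2}\sum_{j=\lceil k/2\rceil+1}^{n}q^{2j}.
\]
Comparing this tail with the full norm $\|y^0 - \hat y^*\|^2$ via the geometric series, and invoking the hypothesis $n \geq 4k$ so that $\lceil k/2 \rceil \leq n/2$, reproduces the estimate $\|\tilde y^k - \hat y^*\| \geq (q^{\lceil k/2\rceil}/\sqrt{2})\,\|y^0 - \hat y^*\|$ in exactly the manner of the proof of Lemma \ref{lemma:lowerbound-distance-prox}, covering both the even and odd iteration counts at once (whereas Lemma \ref{lemma:lowerbound-distance-prox} is stated only for $y^{2k}$).

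Next, I would promote this estimate from $\hat y^*$ to the true saddle point $y^*$. By \eqref{eqn:apxsolu-proximal}, the approximation error $\|\hat y^* - y^*\| \leq q^{n+1}/(\alpha(1-q))$ is made negligible relative to $\|y^0 - \hat y^*\|$ precisely by the second dimension hypothesis $n \geq 2\log_q(\mu_x\mu_y/(\sqrt 2 L_{xy}^2))$. Two triangle inequalities (one on each side of the comparison) absorb this approximation error into a constant factor; together with $q^{2\lceil k/2\rceil} \geq q^{k+1} \geq q \cdot q^{k}$, they yield $\|\tilde y^k - y^*\|^2 \geq q^k \|y^0-y^*\|^2/16$, which is the second claim. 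For the duality gap I then simply invoke \eqref{defn:primal-gap-LB}, which gives $\Delta(\tilde x^k, \tilde y^k) \geq (\mu_y/2)\|\tilde y^k - y^*\|^2 \geq q^k \mu_y \|y^0 - y^*\|^2 / 32$.

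The main technical obstacle is purely the constant-chasing: transferring the tail estimate cleanly from $\hat y^*$ to $y^*$ and handling the parity of $k$ so that the exponent is $q^k$ rather than $q^{2\lceil k/2\rceil}$. This is precisely where the prefactors $1/16$ and $1/32$ are calibrated, and where the specific dimension requirement $n \geq \max\{2\log_q(\mu_x\mu_y/(\sqrt 2 L_{xy}^2)),\,4k\}$ enters — the former so that the approximation-error term drops out in absolute terms, and the latter so that the truncated iterate leaves at least half of the tail of $\hat y^*$ uncovered.
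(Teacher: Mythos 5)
Your proposal follows essentially the same route as the paper: combine the subspace characterization (Lemma~\ref{lemma:subspace-prox} plus the zero-chain Proposition~\ref{prop:zero-chain} with $b\propto e_1$) to confine $\cH_y^k$ to the first $\lceil k/2\rceil$ coordinates, bound the tail of $\hat y^*$ to get the geometric decay, transfer from $\hat y^*$ to $y^*$ using the approximation error \eqref{eqn:apxsolu-proximal} controlled by the $n\geq 2\log_q(\mu_x\mu_y/(\sqrt2 L_{xy}^2))$ hypothesis, and finish via \eqref{defn:primal-gap-LB}; this is exactly what the paper does in Lemma~\ref{lemma:lowerbound-distance-prox} and the discussion preceding Theorem~\ref{thm:proximal}.

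One small wrinkle you should be aware of: your final chain $q^{2\lceil k/2\rceil}\geq q^{k+1}=q\cdot q^k$ actually lands you at $\|\tilde y^k-y^*\|^2\geq q^{k+1}\|y^0-y^*\|^2/16$, which is \emph{weaker} than the stated $q^k\|y^0-y^*\|^2/16$ when $k$ is odd (since $q<1$); the inequality $q^{k+1}\geq q^k$ does not hold. The paper has the same parity gap — Lemma~\ref{lemma:lowerbound-distance-prox} is only proved for $y^{2k}$ with exponent $q^{2k}$, the odd case is dismissed as "similar," and feeding $j=2m-1$ through the same subspace bound $\cH_y^{2m-1}\subseteq\mathrm{Span}\{e_1,\dots,e_m\}$ produces exponent $q^{2m}=q^{j+1}$, not $q^j$. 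This costs at most a fixed factor of $q$ and so does not affect Proposition~\ref{remark:complexity-proximal} or the $\Omega(\sqrt{L_{xy}^2/(\mu_x\mu_y)}\,\ln(1/\epsilon))$ lower bound, but it is worth noting that neither your write-up nor the paper's actually establishes the stated exponent $q^k$ for odd $k$; a fully rigorous statement would replace $q^k$ with $q^{2\lceil k/2\rceil}$ or $q^{k+1}$.
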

\begin{proposition}
	\label{remark:complexity-proximal}
	Under the same set of assumptions of Theorem \ref{thm:proximal}, if we require the duality gap to be bounded by $\epsilon$, the number of iterations needed is at least
	\begin{equation}
	\label{eqn:lowerbound-proximal-1stAlgo}
	k\geq \ln\left(\frac{\mu_y\|y^*-y^0\|^2}{32\epsilon}\right)/\ln(q^{-1}) = \Omega\left(\sqrt{\frac{L_{xy}^2}{\mu_x\mu_y}}\cdot\ln\left(\frac{1}{\epsilon}\right)\right).
	\end{equation}
\end{proposition}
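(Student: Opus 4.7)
The plan is to deduce the proposition directly from Theorem~\ref{thm:proximal} by a two-step argument: first invert the duality-gap lower bound to obtain the first displayed inequality, and then estimate $\ln(q^{-1})$ asymptotically in the problem parameters to produce the $\Omega$-expression on the right.

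First I would apply Theorem~\ref{thm:proximal}. For any integer $k$, any iterate generated by an algorithm in the proximal algorithm class satisfies
\[
\Delta(\tilde x^k, \tilde y^k) \;\geq\; q^k\cdot \frac{\mu_y\|y^*-y^0\|^2}{32}.
\]
If the duality gap is required to be at most $\epsilon$, this lower bound must itself be at most $\epsilon$; taking logarithms and solving for $k$ yields
\[
k \;\geq\; \ln\!\left(\frac{\mu_y\|y^*-y^0\|^2}{32\epsilon}\right)\Big/\ln(q^{-1}),
\]
which is the first claimed inequality. A minor technicality to record is that Theorem~\ref{thm:proximal} requires the ambient dimension to satisfy $n\geq \max\{2\log_q(\mu_x\mu_y/(\sqrt{2}L_{xy}^2)), 4k\}$; since $n$ is not part of the parameter signature of the problem class $\cB(L_{xy},\mu_x,\mu_y)$, we are free to pick $n$ accordingly for each $k$, so this causes no issue.

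Second I would translate $1/\ln(q^{-1})$ into the explicit condition-number factor. Writing $\beta := \mu_x\mu_y/L_{xy}^2$, the closed form in Theorem~\ref{thm:proximal} becomes $q = 1 + 2\beta - 2\sqrt{\beta + \beta^2}$. For the ill-conditioned regime $\beta\to 0^+$, the expansion $\sqrt{\beta+\beta^2}=\sqrt{\beta}(1+\beta/2+O(\beta^2))$ gives
\[
q \;=\; 1 - 2\sqrt{\beta} + O(\beta), \qquad \ln(q^{-1}) \;=\; 2\sqrt{\beta} + O(\beta).
\]
Hence $1/\ln(q^{-1}) = \Theta(1/\sqrt{\beta}) = \Theta\!\left(\sqrt{L_{xy}^2/(\mu_x\mu_y)}\right)$, and plugging this into the preceding display produces the stated $\Omega\!\left(\sqrt{L_{xy}^2/(\mu_x\mu_y)}\,\ln(1/\epsilon)\right)$ bound.

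The argument has no genuine obstacle; the only point requiring a little care is making the expansion of $q$ uniform, so the $\Omega$-estimate does not hide a hidden parameter range. Concretely, one should verify that there is an absolute constant $c>0$ with $\ln(q^{-1})\leq c\sqrt{\beta}$ for all $\beta\in(0,1]$, so that the bound is uniform in the problem parameters. This is a routine univariate computation, since $q$ is a smooth, strictly decreasing function of $\beta$ mapping $(0,\infty)$ into $(0,1)$, controlled at $\beta\to 0^+$ by the expansion above and bounded away from $1$ for $\beta$ bounded below.
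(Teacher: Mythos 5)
Your proposal is correct, and the first step (inverting the duality-gap bound from Theorem~\ref{thm:proximal} and the remark about choosing $n$ large enough for each $k$) matches the paper exactly. The difference is in how you control $\ln(q^{-1})$. You Taylor-expand $q$ near $\beta := \mu_x\mu_y/L_{xy}^2 \to 0^+$ to get $\ln(q^{-1}) = 2\sqrt{\beta} + O(\beta)$, and then you correctly observe that one must separately confirm $\ln(q^{-1}) \le c\sqrt{\beta}$ uniformly to make the $\Omega$-estimate rigorous. The paper short-circuits this uniformity concern: it applies the elementary inequality $\ln(1+z) \le z$ for $z>0$ to obtain the exact algebraic bound
\[
\bigl(\ln(q^{-1})\bigr)^{-1} \;=\; \bigl(\ln(1+(1-q)/q)\bigr)^{-1} \;\ge\; \frac{q}{1-q},
\]
and then substitutes the closed form of $q$ to compute
\[
\frac{q}{1-q} \;=\; \frac{1}{2}\sqrt{\frac{L_{xy}^2}{\mu_x\mu_y}+1}-\frac{1}{2},
\]
which is manifestly $\Omega\bigl(\sqrt{L_{xy}^2/(\mu_x\mu_y)}\bigr)$ with no hidden parameter range. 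This exact computation is cleaner and requires no auxiliary uniformity lemma, but your local-expansion route reaches the same conclusion and your flagging of the uniformity issue shows you understand exactly what must still be checked.
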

The proof of Proposition~\ref{remark:complexity-proximal} is in Appendix \ref{appendix:r-complexity-proximal}.

\subsection{The general proximal algorithm class}
Note that Theorem \ref{thm:proximal} is derived for the proximal algorithm class with the linear span assumption, in this section, we will apply the orthogonal invariance technique, introduced in \cite{nemirovsky1992information}, to generalize the result of Theorem \ref{thm:proximal} to the general proximal algorithm class without the linear span assumption.

Consider the bilinear problem class $\cB(L_{xy},\mu_x,\mu_y)$ and the corresponding worst case problem \eqref{prob:worst-proximal} with $F(x,y) = \frac{\mu_x}{2}\|x\|^2 + \frac{L_{xy}}{2}x^\top A y - \frac{\mu_y}{2}\|y\|^2 - b^\top y$,
where $A$ and $b$ are defined in accordance with Theorem \ref{thm:proximal}. We define the orthogonally rotated problem as 
\begin{equation}
\label{prob:rotate}
\min_x\max_y F_{U,V}(x,y) : = F(Ux,Vy) = \frac{\mu_x}{2}\|x\|^2 + \frac{L_{xy}}{2}x^\top U^\top AV y - \frac{\mu_y}{2}\|y\|^2 - b^\top Vy,
\end{equation}
where $U,V$ are two orthogonal matrices. Therefore, it is clear that $F_{U,V}\in\cB(L_{xy},\mu_x,\mu_y)$. Let $(x^*, y^*)$ be the saddle point of $F(x,y)$, then it is clear that the saddle point of $F_{U,V}(x,y)$ is $(\bar x^*,\bar y^*) = (U^\top  x^*,V^\top y^*).$ Consequently, the lower bound for the general proximal algorithm class is characterized by the following theorem. 
\begin{theorem}
\label{theorem:gen-prox}
Let $\cA$ be any algorithm from the general proximal algorithm class decribed in Definition \ref{defn:Det-Prox-AlgoClass}. We assume the dimension $n$ is sufficiently large for simplicity. For any integer $k$, then there exist orthogonal matrices $U,V$ s.t. $F_{U,V}\in\cB(L_{xy},\mu_x,\mu_y)$, when applying $\cA$ to $F_{U,V}$ with initial solution $(x^0,y^0) = (0,0)$, the iterates and output satisfies
$$\{(x^0,y^0),...,(x^k,y^k)\}\subseteq U^\top\cH_x^{4k-1}\times V^\top\cH_y^{4k-1}\qquad\mbox{and}\qquad(\tilde x^k,\tilde y^k)\in U^\top\cH_x^{4k+1}\times V^\top\cH_y^{4k+1},$$
where $\cH_x^i,\cH_y^i$ are defined by Lemma \ref{lemma:subspace-prox}. Consequenty, by Theorem \ref{thm:proximal}, 
\begin{eqnarray*}
\|\tilde y^k-V^\top  y^*\|^2\geq \frac{q^{4k+2}}{16}\|y^*-y^0\|^2
\end{eqnarray*}
where $q$ is given in Theorem \ref{thm:proximal}. As a result, it takes $\Omega\left(\sqrt{\frac{L^2_{xy}}{\mu_x\mu_y}}\cdot\log\left(\frac{1}{\epsilon}\right)\right)$ iterations to output a solution with $O(\epsilon)$ duality gap.
\end{theorem}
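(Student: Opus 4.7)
The plan is to invoke the orthogonal invariance technique of \cite{nemirovsky1992information}. Since the algorithm $\cA$ is fixed and deterministic while the pair of orthogonal matrices $(U,V)$ parametrizing $F_{U,V}$ is ours to choose, we play $U,V$ adversarially against $\cA$. The goal is to show that, for every $\cA$, one can select $U,V$ so that the iterates $(x^k,y^k)$ and outputs $(\tilde x^k,\tilde y^k)$, when transported to the ``virtual'' coordinates $(\bar x,\bar y):=(Ux,Vy)$, stay inside the Krylov subspaces $\cH_x^{4k-1}\times \cH_y^{4k-1}$ and $\cH_x^{4k+1}\times \cH_y^{4k+1}$ respectively from Lemma \ref{lemma:subspace-prox}. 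Granted this containment, applying Theorem \ref{thm:proximal} with effective iteration index $4k+1$ in place of $k$ immediately delivers the distance and duality-gap bounds claimed in the theorem, and the constant-factor slowdown is absorbed into the $\Omega(\cdot)$, preserving the rate $\Omega\bigl(\sqrt{L_{xy}^2/(\mu_x\mu_y)}\,\ln(1/\epsilon)\bigr)$.

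\textbf{What the oracle reveals.} In virtual coordinates the oracle calls in Definition \ref{defn:Det-Prox-AlgoClass} translate to: gradient-type products $A\bar y^i$ and $A\bar x^i$ (since $U^\top AV\cdot y^i=U^\top A\bar y^i$, and symmetrically on the other block); proximal evaluations $\prox_{\gamma_k f}(Uu^k)\propto Uu^k$ (because $f(x)=\tfrac{\mu_x}{2}\|x\|^2$ is a pure quadratic, so the prox preserves the one-dimensional span of its argument); and $\prox_{\sigma_k g}(Vv^k)\propto(Vv^k-\sigma_k b)$, which contributes only the single fixed direction $b\propto e_1$ beyond $Vv^k$. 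Every oracle call therefore enlarges the set of virtual-space vectors in the algorithm's memory by at most one direction, and by the zero-chain Proposition \ref{prop:zero-chain} each application of $A^2$ extends the corresponding Krylov chain by at most one coordinate.

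\textbf{Lazy rotation.} I would build $U,V$ inductively and lazily: immediately after $\cA$ emits its $j$-th oracle call, only finitely many virtual directions $S_j\subseteq \R^n$ have been ``touched'', and I commit $U,V$ only on $S_j$, leaving full rotational freedom on $S_j^\perp$. When the next call produces a candidate direction outside $S_j$, I align $U$ (or $V$) so that this new direction points along the next available Krylov coordinate, i.e.\ along the next element of $\mathrm{Span}\{b,A^2 b,A^4 b,\dots\}$ on the $y$-side, or $\mathrm{Span}\{Ab,A^3 b,\dots\}$ on the $x$-side. A careful tally, accounting for the two prox calls and the two gradient-type inputs consumed per iteration of $\cA$, shows that each algorithm step can extend each Krylov chain by at most two coordinates, which is exactly where the factor $4$ in $4k{\pm}1$ comes from. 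Theorem \ref{thm:proximal} applied at index $4k+1$ then yields $\|\tilde y^k-V^\top y^*\|^2\ge \frac{q^{4k+2}}{16}\|y^*-y^0\|^2$ and the matching duality-gap bound.

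\textbf{Main obstacle.} The principal technical hurdle is that $\cA_x^k,\cA_y^k,\cA_u^k,\cA_v^k$ are only required to be deterministic, not linear, so a priori nothing forces $u^k,v^k$ to live inside a low-dimensional subspace of $\R^n$. The resolution is that each of these mappings is a deterministic function of only finitely many already-revealed inputs: once those inputs have been forced into the committed subspace $S_j$, the outputs $u^k,v^k$ are fixed vectors of $\R^n$, and the adversary still has full freedom on $S_j^\perp$ to rotate the genuinely new direction arising from $Uu^k$, $Vv^k$, $A\bar x^i$, or $A\bar y^i$ onto the next Krylov coordinate. Verifying that the partial rotations built step by step can be consistently completed into orthogonal matrices in $O(n)$ (and that $F_{U,V}$ remains in $\cB(L_{xy},\mu_x,\mu_y)$, which is automatic since $U,V$ are orthogonal and $\|U^\top AV\|_2=\|A\|_2\le 2$) is the main bookkeeping burden, but is routine given the zero-chain/Krylov structure established in Lemmas \ref{lemma:subspace-prox} and \ref{lemma:apxsolu-proximal}.
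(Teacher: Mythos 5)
Your proposal is correct and follows essentially the same route as the paper: the paper also constructs $U,V$ adversarially and inductively via Lemma~\ref{lemma:yyx} (Lemma 3.1 of \cite{YYXu}), which furnishes an orthogonal $\Gamma$ fixing a committed subspace while rotating the newest iterate into the next Krylov slot, and then verifies by a re-running argument that the earlier iterates of $\cA$ on $F_{U_{j},V_{j}}$ coincide with those on $F_{U_{j-1},V_{j-1}}$. Your ``lazy rotation'' framing is a standard equivalent presentation of this same adversary-rotation technique, and your factor-of-four accounting matches the paper's progression $\cH^{4k-5}\to\cH^{4k-3}\to\cH^{4k-1}$ (iterate, oracle $A$-product buffer, new iterate).
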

For the proof of this theorem, we only need to construct the orthogonal matrices $U,V$ such that when the algorithm $\cA$ is applied to $F_{U,V}$, the subspace inclusion argument $\{(x^0,y^0),...,(x^k,y^k)\}\subseteq U^\top\cH_x^{4k-1}\times V^\top\cH_y^{4k-1}$ and $(\tilde x^k,\tilde y^k)\in U^\top\cH_x^{4k+1}\times V^\top\cH_y^{4k+1}$ holds. As a result, 
\begin{eqnarray*}
	\|\tilde y^k-V^\top  y^*\|^2 = \|V\tilde y^k- y^*\|^2\geq\min_{y\in\cH_y^{4k+1}}\|y-y^*\|^2\geq \frac{q^{4k+2}}{16}\|y^*-y^0\|^2.
\end{eqnarray*}
With this argument, the latter results follow directly from the discussion of Theorem \ref{thm:proximal}. The proof of this theorem is presented in Appendix \ref{appdx:gen-prox}.
\subsection{Tightness of the bound}
We claim the tightness of the derived lower bound by the following remark.
\begin{remark}[Tightness of the bound] Consider the algorithm defined in Example \ref{algo:ChambolleNPock}, from \cite{chambolle2011first,chambolle2016ergodic}.  The achieved upper complexity bound is $\cO\left(\sqrt{\frac{L_{xy}^2}{\mu_x\mu_y}}\cdot\ln\left(\frac{1}{\epsilon}\right)\right)$, and it matches our lower bound. This means that our lower bound \eqref{eqn:lowerbound-proximal-1stAlgo} is tight and the algorithm defined in Example \ref{algo:ChambolleNPock} is an optimal algorithm in the proximal algorithm class in Definition \ref{defn:Prox-AlgoClass}.
\end{remark}

\section{Lower bound for pure first-order algorithms} \label{sec:lower-bound-pure}
\subsection{The worst-case instance}
In this section, we consider the lower complexity bound for the pure first-order method without any proximal oracle. In this case, only the gradient information can be used to construct the iterates and produce the approximate solution output. Similar as before, we still consider the bilinearly coupled problems: 
\begin{equation}
\label{prob:worst-pure}
\min_x\max_y F(x,y):=\half x^\top (B_xA^2 + \mu_xI)x + \frac{L_{xy}}{2}x^\top Ay - \half y^\top (B_yA^2 + \mu_yI)y - b^\top y
\end{equation}
where $b$ is a vector whose value will be determined later. The coefficients $B_x: = \frac{L_x-\mu_x}{4}, B_y:= \frac{L_y-\mu_y}{4}$ and the coupleing matrix $A$ is defined by \eqref{defn:A}. Note that $\|A\|_2\leq 2$ and $\|A\|_2^2\leq 4$, we can check that problem \eqref{prob:worst-pure} is an instance from the problem class $\cF(L_x,L_y,L_{xy},\mu_x,\mu_y)$. This time the subspaces $\cH_x^k$'s and $\cH_y^k$'s are generated by the following gradients:
\begin{equation*}
\begin{cases}
\nabla_x F(x,y) = (B_xA^2 + \mu_xI)x + \frac{L_{xy}}{2}Ay,\\
\nabla_yF(x,y) = -(B_yA^2 + \mu_yI)y + \frac{L_{xy}}{2}Ax - b.
\end{cases}
\end{equation*}
Following Definition \ref{defn:Pure-AlgoClass}, by letting $x^0 = y^0 = 0$ we have
\begin{equation*}
\begin{cases}
\cH_x^1 \subseteq \mathrm{Span}\{0\}\\
\cH_y^1 \subseteq \mathrm{Span}\{b\}
\end{cases}
\quad
\begin{cases}
\cH_x^2 \subseteq \mathrm{Span}\{Ab\}\\
\cH_y^2 \subseteq \mathrm{Span}\{b, A^2b\}
\end{cases}
\quad
\begin{cases}
\cH_x^3 \subseteq \mathrm{Span}\{Ab,A^2(Ab)\}\\
\cH_y^3 \subseteq \mathrm{Span}\{b,A^2b,A^4b\}
\end{cases}
\quad\ldots
\end{equation*}
By induction, we get the general structure of these subspaces.
\begin{lemma}
	\label{lemma:subspace-pure}
	For problem \eqref{prob:worst-pure} and  for any $k \in \mathbb{N}$, if the iterates are generated so that $(x_k,y_k)\in\cH_x^k\times\cH_y^k$, with $\cH_x^k$ and $\cH_y^k$ defined by \eqref{defn:Pure-AlgoClass}, then we have
	$$\cH_x^k \subseteq \begin{cases}
	\{0\}, &  k = 1\\
	\mathrm{Span}\left\{A^{2i}(Ab): 0\leq i\leq k-2 \right\}, & k\geq 2
	\end{cases} \quad \mbox{ and }\quad
	\cH_y^k \subseteq \mathrm{Span}\left\{A^{2i}b: 0\leq i\leq k -1 \right\}.$$
\end{lemma}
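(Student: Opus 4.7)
The plan is to prove Lemma \ref{lemma:subspace-pure} by induction on $k$, mirroring the argument sketched just before the statement. The base case $k=0,1$ follows directly from the initialization $x^0=y^0=0$ together with the initial conditions $\cH_x^0=\cH_y^0=\{0\}$: applying the formulas for the gradients in \eqref{prob:worst-pure} at the zero iterate gives $\nabla_x F(0,0)=0$ and $\nabla_y F(0,0)=-b$, so $\cH_x^1\subseteq\{0\}$ and $\cH_y^1\subseteq\mathrm{Span}\{b\}$, matching the claim.

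For the inductive step, I would assume the conclusion holds for all indices $0,1,\dots,k$ and analyze $\cH_x^{k+1}$ and $\cH_y^{k+1}$ via the recursion \eqref{eqn:Pure-AlgoClass}. Using the closed-form gradients
\[
\nabla_x F(\tilde x^i,\tilde y^i)=(B_xA^2+\mu_xI)\tilde x^i+\tfrac{L_{xy}}{2}A\tilde y^i,\qquad \nabla_y F(\tilde x^i,\tilde y^i)=-(B_yA^2+\mu_yI)\tilde y^i+\tfrac{L_{xy}}{2}A\tilde x^i-b,
\]
I would argue (by linearity, and for any $\tilde x^i\in\cH_x^i$, $\tilde y^i\in\cH_y^i$) that each term either preserves the span or shifts the index of $A$ by a controlled amount. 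Specifically, the operator $B_xA^2+\mu_xI$ acts on the basis $A^{2j}(Ab)$ producing $A^{2j}(Ab)$ and $A^{2(j+1)}(Ab)$, raising the maximum index by at most one. The cross-term $A\tilde y^i$ takes the basis $\{A^{2j}b:0\leq j\leq i-1\}$ of $\cH_y^i$ into $\{A^{2j+1}b:0\leq j\leq i-1\}=\{A^{2j}(Ab):0\leq j\leq i-1\}$. Combining these, $\nabla_x F(\tilde x^i,\tilde y^i)\in\mathrm{Span}\{A^{2j}(Ab):0\leq j\leq i-1\}$, which together with $x^i\in\cH_x^i$ yields $\cH_x^{k+1}\subseteq\mathrm{Span}\{A^{2j}(Ab):0\leq j\leq k-1\}$ as required.

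The argument for $\cH_y^{k+1}$ is symmetric. The operator $B_yA^2+\mu_yI$ acting on $\{A^{2j}b:0\leq j\leq i-1\}$ stays in $\mathrm{Span}\{A^{2j}b:0\leq j\leq i\}$; the cross-term $A\tilde x^i$ maps $\{A^{2j}(Ab):0\leq j\leq i-2\}$ into $\{A^{2j+2}b:0\leq j\leq i-2\}\subseteq\mathrm{Span}\{A^{2j}b:0\leq j\leq i-1\}$; and the constant term $b=A^0b$ is already in the span. Taking the largest index across $0\leq i\leq k$ yields $\cH_y^{k+1}\subseteq\mathrm{Span}\{A^{2j}b:0\leq j\leq k\}$.

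There is no real obstacle here; the lemma is essentially a bookkeeping exercise about how $A$ and $A^2$ move between even and odd powers, analogous to the well-known zero-chain argument of \cite{nesterov2018lectures} used in Proposition \ref{prop:zero-chain}. The only point to be careful about is the indexing at the boundary $k=1$ (since the $x$-inclusion jumps from $\{0\}$ to a nontrivial span), which is exactly why a separate case is stated. Once the inductive shift is checked for both blocks simultaneously, the lemma follows.
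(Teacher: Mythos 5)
Your proof is correct and takes the same route the paper implicitly uses: the paper simply lists the first few subspaces and says "by induction," and your argument fills in the inductive step by tracking how $B_xA^2+\mu_xI$, $B_yA^2+\mu_yI$, and the cross-term $A(\cdot)$ shift the maximum power of $A$ appearing in each span. The bookkeeping checks out, including the $k=1$ boundary case where $\cH_x^1$ is trivial.
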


Different from the discussion of last section, this time it is more convenient to deal with the primal function instead of the dual one. By partially maximizing over $y$ we have
\[
\Phi(x):=\max_y F(x,y) = \half x^\top (B_xA^2+\mu_xI)x + \frac{L_{xy}^2}{8}\left(Ax-\frac{2b}{L_{xy}}\right)^\top (B_yA^2+\mu_yI)^{-1}\left(Ax-\frac{2b}{L_{xy}}\right),
\]
which is $\mu_x$-strongly convex. Therefore, the primal optimal solution $x^*$ is completely characterized by the optimality condition $\nabla \Phi(x^*) = 0$. However, the solution of this system cannot be computed exactly. Instead, we shall construct an approximate solution $\hat x^*$ to the exact solution $x^*$.

\begin{lemma}[Root estimation]\label{lemma:root-estimation} Consider a quartic equation
	\begin{equation}
	\label{eqn:4EQ}
	1 -(4+\alpha)x  + (6+2\alpha + \beta)x^2 - (4+\alpha)x^3  + x^4 = 0,
	\end{equation}
	where the constants are given by
	\begin{equation}
	\label{defn:constants-pure}
	\alpha = \frac{L_{xy}^2}{4B_xB_y} + \frac{\mu_x}{B_x} + \frac{\mu_y}{B_y},\qquad \beta = \frac{\mu_x\mu_y}{B_xB_y}.
	\end{equation}
	As long as $L_x>\mu_x>0,$ and $L_y>\mu_y>0$. Then the constants  $0<\alpha,\beta<+\infty$ are well-defined positive real numbers. For this quartic equation, it has a real root $x=q$ satisfying 
	\begin{equation}
	\label{eqn:interval-pure}
	1-\left(\half +\frac{1}{2\sqrt{2}}\sqrt{\frac{L_x}{\mu_x} + \frac{L_{xy}^2}{\mu_x\mu_y} + \frac{L_y}{\mu_y}}\right)^{-1}<q<1 - \left(\half +\frac{1}{2}\sqrt{\frac{L_{xy}^2}{\mu_x\mu_y} + \frac{L_x}{\mu_x} + \frac{L_y}{\mu_y}-1}\right)^{-1} .
	\end{equation}
\end{lemma}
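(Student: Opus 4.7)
The plan is to exploit the palindromic structure of the quartic: its coefficient sequence $1,\,-(4+\alpha),\,6+2\alpha+\beta,\,-(4+\alpha),\,1$ is symmetric, so the substitution $u = x + 1/x$ (valid for the roots of interest, which come in reciprocal pairs lying in $(0,1)\cup(1,\infty)$) reduces \eqref{eqn:4EQ}, after dividing by $x^2$ and using $x^2 + 1/x^2 = u^2 - 2$, to the quadratic
\begin{equation*}
u^2 - (4+\alpha)u + (4 + 2\alpha + \beta) = 0.
\end{equation*}
Since $L_x>\mu_x>0$ and $L_y>\mu_y>0$, the constants $B_x,B_y,\alpha,\beta$ are all positive; AM--GM gives $\mu_x/B_x + \mu_y/B_y \ge 2\sqrt{\beta}$, and since $L_{xy}^2/(4B_xB_y)>0$ is added on top, I obtain the strict bound $\alpha>2\sqrt{\beta}$, so $\alpha^2-4\beta>0$. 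I take the smaller root $u_- = \tfrac{1}{2}\bigl((4+\alpha)-\sqrt{\alpha^2-4\beta}\bigr)$; a direct check shows $u_- > 2$, and I define $q$ to be the root of $x^2-u_-x+1=0$ lying in $(0,1)$, namely $q = \tfrac{1}{2}\bigl(u_- - \sqrt{u_-^2-4}\bigr)$.

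Next I derive a clean formula for $1-q$. Rationalizing $1-q = \tfrac{1}{2}\bigl(2-u_-+\sqrt{u_-^2-4}\bigr)$ by its conjugate yields the key identity
\begin{equation*}
\frac{1}{1-q} = \frac{1}{2} + \frac{1}{2}\sqrt{\frac{u_-+2}{u_--2}}.
\end{equation*}
Consequently, the claimed interval \eqref{eqn:interval-pure} is equivalent to the two-sided bound $\kappa/2 < (u_-+2)/(u_--2) < \kappa-1$, where I abbreviate $\kappa := L_x/\mu_x + L_{xy}^2/(\mu_x\mu_y) + L_y/\mu_y$.

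To establish these two inequalities, I rationalize $u_- - 2 = 2\beta/\bigl(\alpha+\sqrt{\alpha^2-4\beta}\bigr)$ and expand $u_-+2$, obtaining after simplification
\begin{equation*}
\frac{u_-+2}{u_--2} = 1 + \frac{2\bigl(\alpha + \sqrt{\alpha^2-4\beta}\bigr)}{\beta}.
\end{equation*}
A short bookkeeping computation from the definitions of $B_x,B_y,\alpha,\beta$ yields the pleasant simplification
\begin{equation*}
\frac{\alpha}{\beta} = \frac{L_{xy}^2}{4\mu_x\mu_y} + \frac{L_x-\mu_x}{4\mu_x} + \frac{L_y-\mu_y}{4\mu_y} = \frac{\kappa - 2}{4}.
\end{equation*}
Combined with the strict bounds $0 < \sqrt{\alpha^2-4\beta} < \alpha$ (the first from $\alpha>2\sqrt{\beta}$, the second from $\beta>0$), this gives
\begin{equation*}
\frac{\kappa}{2} = 1 + \frac{2\alpha}{\beta} < \frac{u_-+2}{u_--2} < 1 + \frac{4\alpha}{\beta} = \kappa - 1,
\end{equation*}
which on substitution into the formula for $1/(1-q)$ produces exactly \eqref{eqn:interval-pure}.

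The hard part is spotting the palindromic reduction $u = x + 1/x$ and noticing that $\alpha/\beta$ collapses to the clean quantity $(\kappa-2)/4$; once those two observations are in hand, the rest is careful but mechanical algebra. The only delicate technical points are the sign checks ($u_->2$, hence $q\in(0,1)$) and the strictness of both inequalities on $\sqrt{\alpha^2-4\beta}$, both of which follow from the standing assumption $L_x>\mu_x>0,\,L_y>\mu_y>0$ together with the implicit $L_{xy}>0$.
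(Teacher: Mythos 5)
Your proof is correct, and it takes a genuinely different route from the paper's. You exploit the palindromic structure of the quartic in $q$ by substituting $u = q + 1/q$, which reduces to the quadratic $u^2 - (4+\alpha)u + (4+2\alpha+\beta) = 0$, solve it explicitly, pick the smaller root $u_-$ (verified $>2$ via $\beta>0$), and then produce a closed-form root $q = \tfrac{1}{2}(u_- - \sqrt{u_-^2-4})$. The paper instead substitutes $r = (1-q)^{-1}$, obtains the quartic $f(r) = 1 + \alpha r + (\beta-\alpha)r^2 - 2\beta r^3 + \beta r^4$, observes that it collapses to $f = 1 - \alpha s + \beta s^2$ in the variable $s = r(r-1)$, evaluates $f$ at the two specific points $\underline r = \tfrac{1}{2}+\sqrt{\alpha/(2\beta)+1/4}$ and $\bar r = \tfrac{1}{2}+\sqrt{\alpha/\beta+1/4}$ (chosen so $s$ takes the nice values $\alpha/(2\beta)$ and $\alpha/\beta$), and invokes the intermediate value theorem. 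The two routes are in fact algebraically equivalent under $s = 1/(u-2)$ (indeed, substituting $u = 2 + 1/s$ into your quadratic gives $s^{-2}(1 - \alpha s + \beta s^2) = 0$), but the presentation and payoff differ: your argument yields an explicit closed-form root, while the paper's IVT argument is slightly shorter and sidesteps the need to identify which of the four quartic roots lies in the target interval. Both approaches hinge on the same two facts — the discriminant condition $\alpha^2 > 4\beta$ (from AM--GM plus $L_{xy}>0$) and the identity $\alpha/\beta = (\kappa-2)/4$ — and both are correct.
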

The proof of this lemma is presented in Appendix \ref{appendix-root-estimation}. With this lemma, we can construct the approximate solution $\hat x^*$ as follows.

\begin{lemma}[Approximate optimal solution]
	\label{lemma:apxsolu-pure}
	Let $\alpha,\beta$ be defined according to \eqref{defn:constants-pure}, let $q$ be a real root of quartic equation \eqref{eqn:4EQ} satisfying \eqref{eqn:interval-pure}. Let us define a vector $\hat b$ with elements given by
	\begin{equation}
	\label{defn:hat-b}
	\hat b_1 := (2+\alpha + \beta)q - (3+\alpha)q^2+q^3,\qquad \hat b_2 :=q-1, \quad \mbox{ and } \quad \hat b_k = 0, \mbox{ for } 3\leq k \leq n,
	\end{equation}
	 and then assign $b = \frac{2B_xB_y}{L_{xy}}A^{-1}\hat b$. 
	Then an approximate solution $\hat x^*$ is constructed as
	\begin{equation}
	\hat x^*_{i} = q^i \quad\mbox{ for } \quad i = 1,2,...,n.
	\end{equation}
	The approximation error can be bounded by
	\begin{equation}
	\label{eqn:apxsolu-pure}
	\|\hat x^*-x^*\|\leq \frac{7+\alpha}{\beta}\cdot q^n.
	\end{equation}
	Note that $q<1$ and the lower bound is dimension-independent, hence we are free to choose $n$ to make the approximation error arbitrarily small.
\end{lemma}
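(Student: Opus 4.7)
My plan is to characterize the exact minimizer $x^*$ of the strongly convex primal function $\Phi$ through its normal equation, show that the candidate $\hat x^*$ with entries $q^i$ satisfies that equation exactly except at the final two coordinates, and then convert the resulting residual bound into \eqref{eqn:apxsolu-pure} via coercivity of the relevant operator. The existence and bounds on $q$ are already delivered by Lemma~\ref{lemma:root-estimation}, so I take $q\in(0,1)$ as given.

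The first step is to reduce the optimality condition $\nabla\Phi(x^*)=0$ to a clean equation. Writing out $\nabla\Phi$ for the explicit quadratic expression of $\Phi$, multiplying through on the left by $(B_yA^2+\mu_yI)$ (which commutes with $A$ because everything in sight is a polynomial in $A$), dividing by $B_xB_y$, and using the definitions of $\alpha,\beta$ in \eqref{defn:constants-pure} together with $b=\tfrac{2B_xB_y}{L_{xy}}A^{-1}\hat b$, I expect the condition to collapse to the fourth-order linear system
\[
Mx^* := (A^4+\alpha A^2 + \beta I)\,x^* = \hat b.
\]
The characteristic polynomial of this recursion, as a difference equation in the coordinates of $x^*$, is exactly the quartic \eqref{eqn:4EQ}, which motivates the geometric ansatz $\hat x^*_i = q^i$.

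The second step is a row-by-row verification. For any interior index $3\le i\le n-2$, the rows of $A^2$ and $A^4$ are the standard central-difference stencils $[-1,2,-1]$ and $[1,-4,6,-4,1]$, so $(M\hat x^*)_i$ factors as $q^{i-2}$ times the left-hand side of \eqref{eqn:4EQ} and therefore vanishes, matching $\hat b_i=0$. Row 1 matches $\hat b_1$ by direct substitution, and row 2 produces an expression whose last three terms can be rewritten via \eqref{eqn:4EQ} so as to collapse the whole row to $q-1=\hat b_2$. For the last two rows, where $A^2$ and $A^4$ are truncations of their interior stencils, I would write each actual row as ``interior row minus the dropped terms,'' use the quartic to kill the interior piece, and read off the residuals $r_{n-1}=-q^{n+1}$ and $r_n=-q^n+(4+\alpha)q^{n+1}-q^{n+2}$; since $0<q<1$, the triangle inequality then gives $\|r\|\le(7+\alpha)q^n$.

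The final step converts this residual bound into a distance bound. Because $A^2,A^4\succeq 0$, the operator $M$ satisfies $M\succeq \beta I$, so $\|M^{-1}\|\le 1/\beta$; applying $M^{-1}$ to $r=M(\hat x^*-x^*)$ immediately yields $\|\hat x^*-x^*\|\le(7+\alpha)q^n/\beta$, as claimed. The only nontrivial obstacle I anticipate is the row-2 calculation, where the defining quartic must be invoked to cancel an otherwise unpleasant mismatch — everything else either vanishes by the geometric structure of the ansatz or contributes boundary residuals whose magnitude is controlled trivially by $q<1$.
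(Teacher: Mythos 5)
Your proposal is correct and follows essentially the same route as the paper's proof in Appendix F: reduce $\nabla\Phi(x^*)=0$ to $(A^4+\alpha A^2+\beta I)x^*=\hat b$ via commutativity and scaling, verify rows of $\hat x^*_i = q^i$ against the expanded pentadiagonal system using the quartic to cancel interior and boundary rows, identify the residuals $-q^{n+1}$ and $-q^n+(4+\alpha)q^{n+1}-q^{n+2}$ in the last two rows, and conclude via $A^4+\alpha A^2+\beta I\succeq\beta I$. The row-2 cancellation you flag as the only nontrivial obstacle is indeed how the paper's $\hat b_2 = q-1$ arises, and your residual bookkeeping with the triangle inequality and $q<1$ matches the paper's $(7+\alpha)q^n$ bound exactly.
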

The proof of this lemma is parallel to that of Lemma \ref{lemma:apxsolu-proximal}, but is more involved; the detailed proof is in Appendix \ref{appendix-apxsolu-pure}.

Note that in this case, the vector $Ab\propto \hat b\subset\mathrm{Span}\{e_1,e_2\}$. By the zero-chain property in Proposition \ref{prop:zero-chain}, the subspace $\cH_x^k$ described in Lemma \ref{lemma:subspace-pure} can be calculated by induction
\begin{equation}
\label{eqn:subspace-pure}
\cH_x^k\subset\mathrm{Span}\{e_1,e_2,...,e_k\} \quad \mbox{ for } \quad k\geq 2.
\end{equation}
Parallel to Lemma \ref{lemma:lowerbound-distance-prox}, we have the following lemma, whose proof is in Appendix \ref{appendix-lowerbound-distance-pure}.
\begin{lemma}
	\label{lemma:lowerbound-distance-pure}
	Assume $k\leq \frac{n}{2}$ and $n\geq 2\log_q\left(\frac{\beta}{4\sqrt{2}(7+\alpha)}\right)+2$. Then
	\begin{equation}
	\label{eqn:dist-lower-pure}
	\|x^k-x^*\|^2\geq \frac{q^{2k}}{16}\|x^*-x^0\|^2
	\end{equation}
	where $x^0 = 0$ is the initial solution.
\end{lemma}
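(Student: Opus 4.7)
The plan is to follow the strategy of Lemma~\ref{lemma:lowerbound-distance-prox} in the primal space, simply replacing $\hat y^*$ by the approximate primal optimum $\hat x^*$ from Lemma~\ref{lemma:apxsolu-pure}. The two ingredients needed are (i) a subspace inclusion controlling where the iterate $x^k$ can live and (ii) an explicit geometric formula for $\hat x^*$; both are already in hand.

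First, I would invoke the subspace characterization \eqref{eqn:subspace-pure}, namely $\cH_x^k\subseteq\mathrm{Span}\{e_1,\ldots,e_k\}$ for $k\geq 2$, so every coordinate of $x^k$ beyond the $k$-th is zero. Combined with $\hat x^*_j=q^j$ from Lemma~\ref{lemma:apxsolu-pure}, this gives
\begin{equation*}
\|x^k-\hat x^*\|^2\;\geq\;\sum_{j=k+1}^n q^{2j}\;=\;q^{2k}\cdot\frac{q^2(1-q^{2(n-k)})}{1-q^2},\qquad\|\hat x^*\|^2\;=\;\frac{q^2(1-q^{2n})}{1-q^2}.
\end{equation*}
Hence the ratio equals $q^{2k}\cdot\frac{1-q^{2(n-k)}}{1-q^{2n}}$. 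The elementary identity $1-q^{2n}=(1-q^{2(n-k)})+q^{2(n-k)}(1-q^{2k})$, together with $q^{2(n-k)}\leq q^{2k}$ valid under $k\leq n/2$, reduces the desired lower bound $\frac{1-q^{2(n-k)}}{1-q^{2n}}\geq\frac12$ to the trivial inequality $(1-q^{2k})^2\geq 0$. This yields $\|x^k-\hat x^*\|\geq \frac{q^k}{\sqrt 2}\|\hat x^*\|$; since $x^0=0$, we also read $\|\hat x^*\|=\|\hat x^*-x^0\|$.

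The remaining step is to transfer this bound from $\hat x^*$ to the true optimum $x^*$. Two applications of the triangle inequality give
\begin{equation*}
\|x^k-x^*\|\;\geq\;\tfrac{q^k}{\sqrt 2}\|\hat x^*\|-\|\hat x^*-x^*\|,\qquad \|x^*-x^0\|\;\leq\;\|\hat x^*\|+\|\hat x^*-x^*\|,
\end{equation*}
and Lemma~\ref{lemma:apxsolu-pure} supplies $\|\hat x^*-x^*\|\leq \tfrac{7+\alpha}{\beta}q^n$. The main (and essentially only) obstacle is accounting carefully for both approximation errors so that the constant $\tfrac{1}{\sqrt 2}$ can be shaved down to $\tfrac{1}{4}$, which after squaring produces $\tfrac{1}{16}$. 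Using $\|\hat x^*\|\geq q$ (the first term of the geometric sum) and $k\leq n/2$, the hypothesis $n\geq 2\log_q\!\bigl(\tfrac{\beta}{4\sqrt 2(7+\alpha)}\bigr)+2$ is calibrated precisely to guarantee $\tfrac{7+\alpha}{\beta}q^n\leq\bigl(\tfrac{1}{\sqrt 2}-\tfrac14\bigr)q^k\|\hat x^*\|$ with enough slack to simultaneously dominate $\tfrac{q^k}{4}\cdot\tfrac{7+\alpha}{\beta}q^n$ on the upper bound for $\|x^*-x^0\|$. Collecting terms then yields the claimed $\|x^k-x^*\|^2\geq\tfrac{q^{2k}}{16}\|x^*-x^0\|^2$, and the dimension-freeness of the approximation bound in Lemma~\ref{lemma:apxsolu-pure} is exactly what makes this choice of $n$ legitimate.
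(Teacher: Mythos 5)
Your proposal is correct and follows essentially the same strategy as the paper's proof: the geometric-tail bound from the subspace inclusion \eqref{eqn:subspace-pure}, followed by controlling the approximation error $\|\hat x^*-x^*\|$ via the lower bound on $n$ and transferring from $\hat x^*$ to $x^*$. The only minor deviation is that you use a direct (linear) triangle inequality $\|x^k-x^*\|\geq \delta_k-\|\hat x^*-x^*\|$ in the final step, whereas the paper (mirroring Appendix~\ref{appendix-lowerbound-distance-prox}) uses the slightly more elaborate quadratic bound $\|x^k-x^*\|^2\geq \delta_k(\delta_k-2\|\hat x^*-x^*\|)\geq\tfrac12\delta_k^2$; both routes comfortably produce the constant $\tfrac{1}{16}$ under the stated hypothesis on $n$.
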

Consequently, the duality gap is lower bounded by
$$\Delta(x^k,y^k)\geq \frac{\mu_x}{2}\|x^k-x^*\|^2\geq q^{2k}\cdot\frac{\mu_x\|x^0-x^*\|^2}{32}.$$
Summarizing, we present our second main result in the following theorem. 
\begin{theorem}
	\label{thm:pure}
Let positive parameters $\mu_x,\, \mu_y>0$ and $L_x>\mu_x,\, L_y>\mu_y,\, L_{xy} > 0$ be given.  For any integer $k$, there exists a problem instance in $\cF(L_x,L_y,L_{xy},\mu_x,\nu_y)$ of form \eqref{prob:worst-pure}, with $n\geq \max\left\{2\log_q\left(\frac{7+\alpha}{\beta}\right), 2k\right\}$, the constants $\alpha,\beta$ as in \eqref{defn:constants-pure}, the matrix $A\in\mathbb{R}^{n\times n}$ as in \eqref{defn:A}, the vector $b = \frac{2B_xB_y}{L_{xy}}A^{-1}\hat b$ where $\hat b$ as in \eqref{defn:hat-b}. For this problem, any approximate solution $(\tilde x^k, \tilde y^k)\in\cH_x^k\times\cH_y^k$ generated by first-order algorithm  class \eqref{eqn:Pure-AlgoClass} satisfies
	\begin{eqnarray}
	\max_y F(\tilde x^{k},y) - \min_{x} F(x, \tilde y^k)\geq q^{2k}\cdot\frac{\mu_x\|x^*-x^0\|^2}{32} \quad\mbox{ and }\quad \|\tilde x^k-x^*\|^2\geq q^{2k}\cdot\frac{\|x^*-x^0\|^2}{16},
	\end{eqnarray}
where 
$q$ satisfying \eqref{eqn:interval-pure} is a root of the quartic equation \eqref{eqn:4EQ}.
\end{theorem}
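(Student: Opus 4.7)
The plan is to assemble Theorem \ref{thm:pure} from three ingredients already prepared in this section: the subspace inclusion \eqref{eqn:subspace-pure} derived from Lemma \ref{lemma:subspace-pure}, the explicit approximate optimal solution $\hat x^*_i = q^i$ from Lemma \ref{lemma:apxsolu-pure}, and the distance lower bound in Lemma \ref{lemma:lowerbound-distance-pure}. Once the distance-to-$x^*$ estimate is in hand, the strong convexity of the primal function $\Phi$, as captured by \eqref{defn:primal-gap-LB}, converts it into the duality gap bound. So the real content of the theorem has already been extracted in the lemmas — what remains is bookkeeping.

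Concretely, I would first check that the instance \eqref{prob:worst-pure} genuinely lies in $\cF(L_x, L_y, L_{xy}, \mu_x, \mu_y)$: the choice $B_x = (L_x - \mu_x)/4$, $B_y = (L_y - \mu_y)/4$ together with $\|A\|_2 \leq 2$ directly gives $\|\nabla_{xx}^2 F\|_2 \leq L_x$, $\|\nabla_{yy}^2 F\|_2 \leq L_y$, $\|\nabla_{xy}^2 F\|_2 \leq L_{xy}$ and the asserted strong convexity/concavity moduli. Lemma \ref{lemma:root-estimation} supplies a root $q \in (0,1)$ of \eqref{eqn:4EQ} in the advertised interval \eqref{eqn:interval-pure}, which in turn makes $\hat x^*$ and $b = \frac{2 B_x B_y}{L_{xy}} A^{-1}\hat b$ well-defined via Lemma \ref{lemma:apxsolu-pure}.

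The second step is to apply Lemma \ref{lemma:lowerbound-distance-pure} not to the iterate $x^k$ but to the output $\tilde x^k$. This is legitimate because the proof of that lemma uses only the containment $\tilde x^k \in \cH_x^k \subseteq \mathrm{Span}\{e_1,\ldots,e_k\}$ from \eqref{eqn:subspace-pure}; the projection argument against the tail coordinates of $\hat x^*$ goes through verbatim. Under the dimension hypothesis $n \geq \max\{2\log_q((7+\alpha)/\beta), 2k\}$, this yields $\|\tilde x^k - x^*\|^2 \geq \frac{q^{2k}}{16}\|x^* - x^0\|^2$, which is the second claimed inequality of the theorem. Plugging this into \eqref{defn:primal-gap-LB} and discarding the nonnegative $\tfrac{\mu_y}{2}\|\tilde y^k - y^*\|^2$ term gives
\[
\Delta(\tilde x^k,\tilde y^k) \;\geq\; \tfrac{\mu_x}{2}\|\tilde x^k - x^*\|^2 \;\geq\; q^{2k}\cdot\frac{\mu_x \|x^* - x^0\|^2}{32},
\]
which matches the first claimed inequality.

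The only place I expect genuine care is in the passage from $\hat x^*$ to $x^*$: the entire construction is phrased in terms of the approximate solution, but the theorem statement references the true saddle point. The role of the dimension threshold is precisely to make the error \eqref{eqn:apxsolu-pure} negligible — small enough that the clean tail estimate $\|\tilde x^k - \hat x^*\| \geq q^k \|\hat x^*\|/\sqrt{2}$ survives with only a constant-factor loss, producing the factor $1/16$ rather than $1/2$ in the final bound. This slack absorption is already handled inside Lemma \ref{lemma:lowerbound-distance-pure}, so at the theorem level it amounts to simply quoting the lemma; the only thing to verify is that the dimension condition there is compatible with the one stated in Theorem \ref{thm:pure}.
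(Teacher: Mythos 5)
Your proposal is correct and follows essentially the same route as the paper: verify membership in $\cF(L_x,L_y,L_{xy},\mu_x,\mu_y)$ from $\|A\|_2\leq 2$ and the choice of $B_x,B_y$, invoke Lemmas \ref{lemma:root-estimation}, \ref{lemma:apxsolu-pure}, and \ref{lemma:lowerbound-distance-pure} to get $\|\tilde x^k - x^*\|^2 \geq \frac{q^{2k}}{16}\|x^*-x^0\|^2$ for any element of $\cH_x^k$, and then convert to a duality-gap bound via the strong-convexity inequality \eqref{defn:primal-gap-LB}. This is precisely how the paper assembles Theorem \ref{thm:pure} from its preparatory lemmas, including the observation that Lemma \ref{lemma:lowerbound-distance-pure} applies to $\tilde x^k$ just as well as $x^k$ since the argument only uses $\tilde x^k\in\cH_x^k$.
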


\begin{remark}
	\label{remark:complexity-pure}
	As a result, if we require the duality gap to be bounded by $\epsilon$, then the number of iterations needed is at least
	\begin{equation}
	\label{eqn:lowerbound-pure-1stAlgo}
	k\geq \half\ln\left(\frac{\mu_x\|x^*-x^0\|^2}{32\epsilon}\right)/\ln(q^{-1}) = \Omega\left(\sqrt{\frac{L_x}{\mu_x} + \frac{L_{xy}^2}{\mu_x\mu_y} + \frac{L_y}{\mu_y}}\cdot\ln\left(\frac{1}{\epsilon}\right)\right).
	\end{equation}
\end{remark}

\subsection{The general pure first-order algorithm class}
Consider the problem class $\cF(L_x,L_y,L_{xy},\mu_x,\mu_y)$, we define the orthogonally rotated problem as 
{\small
\begin{equation}
\label{prob:rotate'}
\min_x\max_y F_{U,V}(x,y) : =  \half x^\top (B_xU^\top\!\!A^2U + \mu_xI)x + \frac{L_{xy}}{2}x^\top U^\top\!\!AVy - \half y^\top (B_yV^\top\!\!A^2V + \mu_yI)y - b^\top V^\top\!y
\end{equation}}
$\!\!$where $A,b,B_x,B_y$ are defined in accordance with Theorem \ref{thm:pure} and  $U,V$ are two orthogonal matrices. Therefore, it is clear that $F_{U,V}\in\cF(L_x,L_y,L_{xy},\mu_x,\mu_y)$. Let $( x^*,y^*)$ be the saddle point of $F(x,y)$, then it is clear that the saddle point of $F_{U,V}(x,y)$ is $(\bar x^*,\bar y^*) = (U^\top \bar x^*,V^\top\bar y^*).$ Consequently, the lower bound for the general proximal algorithm class is characterized by the following theorem. 
	\begin{theorem}
		\label{theorem:gen-pure}
		Let $\cA$ be any algorithm from the general pure first-order algorithm class described in Definition \ref{defn:Det-Pure-AlgoClass}. We assume the dimension $n$ is sufficiently large for simplicity. For any integer $k$, then there exist orthogonal matrices $U,V$ s.t. $F_{U,V}\in\cF(L_x,L_y,L_{xy},\mu_x,\mu_y)$, when applying $\cA$ to $F_{U,V}$ with initial solution $(x^0,y^0) = (0,0)$, the iterates and output satisfies
		$$\{(x^0,y^0),...,(x^k,y^k)\}\subseteq U^\top\cH_x^{2k}\times V^\top\cH_y^{2k}\qquad\mbox{and}\qquad(\tilde x^k,\tilde y^k)\in U^\top\cH_x^{2k+1}\times V^\top\cH_y^{2k+1},$$
		where $\cH_x^i,\cH_y^i$ are defined by Lemma \ref{lemma:subspace-pure}. Consequenty, by Theorem \ref{thm:proximal}, 
		\begin{eqnarray*}
			\|\tilde x^k-U^\top x^*\|^2 \geq \frac{q^{4k+2}}{16}\|x^*-x^0\|^2
		\end{eqnarray*}
		where $q$ is defined in Theorem \ref{thm:pure}. As a result, it takes $\Omega\left(\sqrt{\frac{L_x}{\mu_x} + \frac{L_{xy}^2}{\mu_x\mu_y} + \frac{L_y}{\mu_y}}\cdot\ln\left(\frac{1}{\epsilon}\right)\right)$ iterations to output a solution with $O(\epsilon)$ duality gap.
	\end{theorem}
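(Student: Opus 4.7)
The plan is to apply the adversary-rotation (orthogonal invariance) technique of \cite{nemirovsky1992information} to promote the linear-span lower bound of Theorem \ref{thm:pure} to an unconditional bound over the general pure first-order class of Definition \ref{defn:Det-Pure-AlgoClass}. Since $\cA$ is deterministic but $U,V$ are for us to choose, we play the role of an adversary who commits columns of $U$ and $V$ on the fly while answering $\cA$'s queries, arranging every response so that, after pulling back by $U^\top$ and $V^\top$, the generated subspaces coincide with the Krylov subspaces $\cH_x^\ast$, $\cH_y^\ast$ of Lemma \ref{lemma:subspace-pure}. Once the construction is completed, Lemma \ref{lemma:lowerbound-distance-pure} is invoked on $U\tilde x^k$ to produce the stated distance bound, and Remark \ref{remark:complexity-pure} converts this into the iteration-count lower bound.

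First, I would set up an induction on the iteration count $k$. The base case is trivial: $(x^0,y^0)=(0,0)$ is in every subspace, and no column of $U,V$ needs to be fixed yet. Inductively, assume partial orthonormal systems $\{u_1,\dots,u_{m_k}\}$ and $\{v_1,\dots,v_{m_k}\}$ in $\R^n$ with $m_k\le 2k$ have been committed, and for every completion of these systems to full orthogonal matrices $U,V$: (i) the queried iterates so far satisfy $Ux^j\in\cH_x^{2j}$ and $Vy^j\in\cH_y^{2j}$ for $j\le k$; (ii) every gradient already supplied to $\cA$ equals the corresponding partial gradient of $F_{U,V}$ and depends only on the already-committed columns.

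Second, the inductive step. Determinism of $\cA$ means $(x^{k+1},y^{k+1})$ is a fixed function of the responses already delivered, so $Ux^{k+1}\in\cH_x^{2k}$ and $Vy^{k+1}\in\cH_y^{2k}$. To answer the next pair of queries we need the vectors
\begin{equation*}
U\nabla_xF_{U,V}(x^{k+1},y^{k+1}) = (B_xA^2+\mu_xI)(Ux^{k+1}) + \tfrac{L_{xy}}{2}A(Vy^{k+1}),
\end{equation*}
\begin{equation*}
V\nabla_yF_{U,V}(x^{k+1},y^{k+1}) = -(B_yA^2+\mu_yI)(Vy^{k+1}) + \tfrac{L_{xy}}{2}A(Ux^{k+1}) - b.
\end{equation*}
By the zero-chain property (Proposition \ref{prop:zero-chain}) together with the analogous one-step shift property of $A$ evident from \eqref{defn:A}, these vectors lie in $\cH_x^{2k+2}$ and $\cH_y^{2k+2}$ respectively. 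Thus committing at most one new orthonormal column to each of $U$ and $V$ is enough to evaluate the responses and pass them back to $\cA$ in the rotated coordinates; because $n$ is sufficiently large, the required orthogonal complement is always non-empty, and the fresh columns can be chosen to be simultaneously orthogonal to all previously committed ones. After $k$ iterations we have fixed at most $2k$ columns in each of $U,V$, and the final output $(\tilde x^k,\tilde y^k)$ uses at most one further gradient evaluation, hence lies in $U^\top\cH_x^{2k+1}\times V^\top\cH_y^{2k+1}$.

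Third, completing $U,V$ to full orthogonal matrices arbitrarily, the function $F_{U,V}$ is an element of $\cF(L_x,L_y,L_{xy},\mu_x,\mu_y)$ by orthogonal invariance of operator norms and strong-convexity constants, and its saddle point is $(U^\top x^*,V^\top y^*)$. Since $U\tilde x^k\in\cH_x^{2k+1}\subset\mathrm{Span}\{e_1,\dots,e_{2k+1}\}$, Lemma \ref{lemma:lowerbound-distance-pure} applied at index $2k+1$ gives
\begin{equation*}
\|\tilde x^k-U^\top x^*\|^2 = \|U\tilde x^k-x^*\|^2 \ge \min_{x\in\cH_x^{2k+1}}\|x-x^*\|^2 \ge \frac{q^{2(2k+1)}}{16}\|x^*-x^0\|^2,
\end{equation*}
which is the claimed bound $\tfrac{q^{4k+2}}{16}\|x^*-x^0\|^2$. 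The iteration-count conclusion $\Omega\bigl(\sqrt{L_x/\mu_x+L_{xy}^2/(\mu_x\mu_y)+L_y/\mu_y}\cdot\ln(1/\epsilon)\bigr)$ then follows by the same logarithmic manipulation as in Remark \ref{remark:complexity-pure}, the replacement of $k$ by $2k+1$ being absorbed into the $\Omega(\cdot)$.

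The main obstacle I expect is making the adversary-rotation rigorous, specifically the \emph{simultaneous consistency} of the responses returned at all past iterations with a \emph{single} eventual pair $(U,V)$: one must show that the columns $u_i,v_i$ introduced at different steps never conflict, and that the gradient values committed early on remain exact gradients of $F_{U,V}$ for any orthogonal completion. A careful ordering of the two partial-gradient queries per iteration, combined with the explicit action of $A$ and $A^2$ on the standard basis via Proposition \ref{prop:zero-chain}, is what controls the growth and produces the factor-of-two blow-up between the linear-span index $k$ in Theorem \ref{thm:pure} and the index $2k+1$ obtained here.
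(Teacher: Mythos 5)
Your strategy — promote the linear-span bound of Theorem~\ref{thm:pure} to the general pure first-order class by an adversary rotation, then invoke Lemma~\ref{lemma:lowerbound-distance-pure} at index $2k+1$ — is exactly the paper's strategy: the paper says its proof is ``completely parallel'' to that of Theorem~\ref{theorem:gen-prox}, which is carried out in Appendix~\ref{appdx:gen-prox} by recursively composing rotations obtained from Lemma~\ref{lemma:yyx}. Your phrasing as an on-the-fly commitment of orthonormal columns (a Nemirovsky--Yudin resisting-oracle) and the paper's phrasing as a sequence $U_0,U_1,\dots,U_k$ of retroactively consistent full rotations are two implementations of the same idea, and the final application of Lemma~\ref{lemma:lowerbound-distance-pure} and Remark~\ref{remark:complexity-pure} to get $q^{4k+2}/16$ and the $\Omega(\cdot)$ count is correct.

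However, the core step of your induction is stated incorrectly, and the error is not merely cosmetic. You write that ``determinism of $\cA$ means $(x^{k+1},y^{k+1})$ is a fixed function of the responses already delivered, so $Ux^{k+1}\in\cH_x^{2k}$ and $Vy^{k+1}\in\cH_y^{2k}$.'' Determinism only fixes the vector $x^{k+1}\in\R^n$ once the oracle answers are pinned down; it does \emph{not} constrain the image $Ux^{k+1}$ to any subspace, because the map $\cA_x^{k+1}$ in Definition~\ref{defn:Det-Pure-AlgoClass} is arbitrary and $x^{k+1}$ can point anywhere in $\R^n$. The inclusion of $Ux^{k+1}$ in a low-index Krylov subspace has to be \emph{manufactured} by the adversary: either by committing a fresh row of $U$ proportional to the component of $x^{k+1}$ orthogonal to the already-committed span, or — as the paper does — by applying Lemma~\ref{lemma:yyx} with $\cX = U_{k}^\top\cH_x^{2k+1}$, $\bar\cX = U_k^\top\cH_x^{2k+2}$, $\bar x = x^{k+1}$, and then verifying by induction that the composed rotation leaves all earlier iterates and gradient responses unchanged. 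Your proposal lumps this absorption step into ``committing at most one new orthonormal column'' without showing that the column can be chosen both to absorb $x^{k+1}$ and to preserve the already-returned gradients, which is precisely the ``simultaneous consistency'' obstacle you flag at the end but do not resolve. Related to this, your indices are off by one in the gradient bound: if $Ux^{k+1}\in\cH_x^{2k}$ and $Vy^{k+1}\in\cH_y^{2k}$, then $(B_xA^2+\mu_xI)Ux^{k+1}+\tfrac{L_{xy}}{2}AVy^{k+1}\in\cH_x^{2k+1}$, not $\cH_x^{2k+2}$; the extra increment comes from absorbing the new iterate, and accounting for exactly two index increments per step (one from the zero-chain action of $A$ and $A^2$ on the gradients, one from absorbing the arbitrarily-located iterate) is what produces the factor-of-two in $\cH_x^{2k}$ relative to the linear-span index in Lemma~\ref{lemma:subspace-pure}. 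The fix is to mirror the paper's appendix argument: establish, by induction on $j$, that the algorithm produces identical iterates on $F_{U_{j-1},V_{j-1}}$ and $F_{U_j,V_j}$ for the first $j$ steps (because the connecting rotation $\Gamma_x^{j-1}$ fixes $U_{j-1}^\top\cH_x^{2j-1}$ pointwise, hence fixes all past iterates and all past $U^\top A V y^i$ values), and separately that $\Gamma_x^{j-1}x^j_{j-1}\in U_{j-1}^\top\cH_x^{2j}$. Once that is in place your final inequality and iteration count go through as written.
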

	The proof of this theorem is completely parallel to that of Theorem \ref{theorem:gen-prox}. We only need to construct the orthogonal matrices $U,V$ such that $\{(x^0,y^0),...,(x^k,y^k)\}\subseteq U^\top\cH_x^{2k}\times V^\top\cH_y^{2k}$ and $(\tilde x^k,\tilde y^k)\in U^\top\cH_x^{2k+1}\times V^\top\cH_y^{2k+1}$ hold, whose proof follows exactly the same proof procedure of Theorem \ref{theorem:gen-prox}. Then argue that 
	\begin{eqnarray*}
		\|\tilde x^k-U^\top x^*\|^2 = \|U\tilde x^k- x^*\|^2\geq\min_{x\in\cH_x^{2k+1}}\|x-x^*\|^2\geq \frac{q^{4k+2}}{16}\|x^*-x^0\|^2.
	\end{eqnarray*}
 The latter results follow Theorem \ref{thm:pure} and we omit the proof. 

\subsection{Tightness of the bound}
In this section, we discuss the tightness of this bound. Currently, to the best of our knowledge, there does not exist a pure first-order algorithm that can achieve the lower complexity bound provided in \eqref{eqn:lowerbound-pure-1stAlgo}. Therefore, whether an optimal algorithm exists that can match this bound or the bound can be further improved remains an open problem. However, we shall see below that 
\eqref{eqn:lowerbound-pure-1stAlgo} under several special parameter regimes is indeed a tight bound.

\textbf{Case 1: $\cF(L_x,L_y, L_{xy},\mu_x,\mu_y)$}   \,\,\,
For this general class, define $L = \max\{L_x,L_y,L_{xy}\}$. A near optimal upper bound of 
$$\cO\left(\sqrt{\frac{L_x}{\mu_x} + \frac{L\cdot L_{xy}}{\mu_x\mu_y} + \frac{L_y}{\mu_y}}\cdot\ln\left(\frac{1}{\epsilon}\right)\right)$$
is obtained in \cite{wang2020improved}, which almost matches our lower bound. 

\textbf{Case 2: $\cF(L_x,L_y, 0,\mu_x,\mu_y)$}   \,\,\,
 In this case $L_{xy} = 0$, meaning that variables $x$ and $y$ are decoupled.  Problem \eqref{prob:min-max} becomes two independent convex problems with condition numbers $\frac{L_x}{\mu_x}$ and $\frac{L_y}{\mu_y}$ respectively. In this case \eqref{eqn:lowerbound-pure-1stAlgo} is reduced to
 $$\Omega\left(\sqrt{\frac{L_x}{\mu_x}}\ln\left(\frac{1}{\epsilon}\right) + \sqrt{\frac{L_y}{\mu_y}}\ln\left(\frac{1}{\epsilon}\right)\right).$$
 This is matched by running two independent Nesterov's accelerated gradient methods \cite{nesterov2018lectures}.

\textbf{Case 3: $\cF(L,L,L,\mu,\mu)$} \,\,\, In this case $L_x = L_y = L_{xy} = L$, $\mu_x = \mu_y = \mu$. Then \eqref{eqn:lowerbound-pure-1stAlgo} is reduced to
\[
\Omega\left(\frac{L}{\mu}\ln\left(\frac{1}{\epsilon}\right)\right) . 
\]
The extra-gradient algorithm (EG) and the accelerated dual extrapolation algorithm (ADE) introduced in Example \ref{algo:single-loop} have achieved this bound; see e.g. \cite{nesterov2006solving,mokhtari2019unified}.

\textbf{Case 4: $\cF(L_x,\cO(1)\cdot\mu_y,L_{xy},\mu_x,\mu_y)$} \,\,\, In this case $L_y = \cO(1)\cdot\mu_y$, meaning that one side of the problem is easy to solve. Then,  \eqref{eqn:lowerbound-pure-1stAlgo} is reduced to
$$\Omega\left(\sqrt{\frac{L_x}{\mu_x} + \frac{L_{xy}^2}{\mu_x\mu_y}}\cdot\ln\left(\frac{1}{\epsilon}\right)\right).$$
For the double loop algorithm defined in Example \ref{algo:double-loop}, when we set the inner loop iteration to be {\small$T_2 = \cO\left(\sqrt{\frac{L_y}{\mu_y}}\ln\left(\frac{1}{\epsilon}\right)\right) = \cO\left(\ln\left(\frac{1}{\epsilon}\right)\right)$}, and the outer loop iteration to be {\small$T_1 = \cO\left(\sqrt{\frac{L_{\Phi,x}}{\mu_x}}\ln\left(\frac{1}{\epsilon}\right)\right)$$ = \cO\left(\sqrt{\frac{L_x}{\mu_x} + \frac{L_{xy}^2}{\mu_x\mu_y}}\cdot\ln\left(\frac{1}{\epsilon}\right)\right)$}. Then, an upper bound of
\[
T_1T_2 = \cO\left(\sqrt{\frac{L_x}{\mu_x} + \frac{L_{xy}^2}{\mu_x\mu_y}}\cdot\ln^2\left(\frac{1}{\epsilon}\right)\right)
\]
can be guaranteed. It is tight up to a logarithmic factor.

\textbf{Case 5: $\cF(L,L,L,\mu_x,\mu_x)$} \,\,\, In this case $L_x = L_y = L_{xy} = L$. Then \eqref{eqn:lowerbound-pure-1stAlgo} is reduced to
\[
\Omega\left(\sqrt{\frac{L^2}{\mu_x\mu_y}}\ln\left(\frac{1}{\epsilon}\right)\right) . 
\]
This bound has been achieved by \cite{lin2020near} up to a logarithmic factor.

\section{Reduction to lower bounds for convex-concave problems.}
Note that in the previous sections, we consider the strongly-convex and strongly-concave problem classes $\cB(L_{xy},\mu_x,\mu_y)$ and $\cF(L_{x},L_{y},L_{xy},\mu_x,\mu_y)$, where $\mu_x>0$ and $\mu_y>0$. In this section, we show how our iteration complexity lower bounds provided in Theorem \ref{thm:proximal} and Theorem \ref{thm:pure} can be reduced to the problem classes with $\mu_x = \mu_y = 0$. Similar reduction can also be done for the case where $\mu_x>0, \mu_y = 0$, but is omitted in this paper. 
\subsection{Lower bound for pure first-order algorithm class}
Unlike the strongly-convex and strongly-concave saddle point problems, the saddle point of the general convex-concave problem may not always exist. Therefore, we define a new problem class with bounded saddle point solution as follows. 
\begin{definition}(Problem class $\cF_0(L_x,L_y,L_{xy},R_x,R_y)$)
	\label{definition:F-cc}
	We say a function $F$ belongs to the class $\cF_0(L_x,L_y,L_{xy},R_x,R_y)$ as long as: (i). $F\in\cF(L_x,L_y,L_{xy},0,0).$ (ii). The solution to $(x^*,y^*) = \argmin_x\argmax_y F(x,y)$ exists, and $\|x^*\|\leq R_x$, $\|y^*\|\leq R_y$.
\end{definition}
For this problem class, we have the following lower bound result, as a corollary of Theorem \ref{theorem:gen-pure}. 
\begin{corollary}
	\label{corollary:pure-cc}
	Consider applying the general first-order algorithm class defined by \eqref{defn:Pure-AlgoClass} to the problem class $\cF_0(L_x,L_y,L_{xy},R_x,R_y)$. For any $\epsilon>0$, there exists a problem instance $F_\epsilon(x,y)\in\cF_0(L_x,L_y,L_{xy},R_x,R_y)$, such that 
	\begin{eqnarray}
	\label{cor:pure-cc}
	\Omega\left(\,\,\sqrt{\frac{L_xR_x^2}{\epsilon}}+\frac{L_{xy}R_xR_y}{\epsilon} + \sqrt{\frac{L_yR_y^2}{\epsilon}}\,\,\right)
	\end{eqnarray}	
	iterations are required to reduce the duality gap to $\epsilon$.
\end{corollary}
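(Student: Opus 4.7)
The plan is to reduce Corollary \ref{corollary:pure-cc} to the strongly-convex/strongly-concave bound of Theorem \ref{thm:pure} (equivalently Theorem \ref{theorem:gen-pure}) by instantiating that result with strong-convexity/concavity parameters $\mu_x,\mu_y$ that are tuned to vanish at the correct rate relative to the target accuracy $\epsilon$.

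Given the prescribed data $L_x,L_y,L_{xy},R_x,R_y,\epsilon$, I would set $\mu_x = c\epsilon/R_x^2$ and $\mu_y = c\epsilon/R_y^2$ for a suitable constant $c>0$ (for concreteness, $c=64$). Using these values, take the worst-case instance \eqref{prob:worst-pure} from Theorem \ref{thm:pure}, then rescale it so that $\|x^*\|\leq R_x$ and $\|y^*\|\leq R_y$ while preserving the Lipschitz constants $L_x,L_y,L_{xy}$. Since any $\mu$-strongly convex function is also convex without any change of its Lipschitz constants, the rescaled instance lies in $\cF(L_x,L_y,L_{xy},0,0)$, and by construction its saddle point lies in the prescribed ball, so it belongs to $\cF_0(L_x,L_y,L_{xy},R_x,R_y)$.

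By Theorem \ref{thm:pure}, the duality gap after $k$ iterations satisfies $\Delta(\tilde x^k,\tilde y^k)\geq q^{2k}\cdot\mu_x\|x^*-x^0\|^2/32 = q^{2k}\cdot\mu_x R_x^2/32$. Lemma \ref{lemma:root-estimation} gives $1-q=\Theta(1/\sqrt{S})$ with $S:=L_x/\mu_x+L_{xy}^2/(\mu_x\mu_y)+L_y/\mu_y$ (for $S$ large, which holds for small $\epsilon$). Substituting the chosen $\mu_x,\mu_y$ yields
$$S = \Theta\!\left(\frac{L_x R_x^2}{\epsilon}+\frac{L_{xy}^2 R_x^2 R_y^2}{\epsilon^2}+\frac{L_y R_y^2}{\epsilon}\right).$$
With $c$ chosen so that $\mu_x R_x^2\geq 64\epsilon$, the smallest $k$ that can drive $q^{2k}$ below $1/2$ (at which point the gap falls below $\epsilon$) is $\Theta(1/(1-q))=\Theta(\sqrt{S})$. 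The elementary inequality $\sqrt{a+b+c}\geq(\sqrt{a}+\sqrt{b}+\sqrt{c})/\sqrt{3}$ then converts $\sqrt{S}$ into the three-term sum appearing in \eqref{cor:pure-cc}, yielding the claimed iteration lower bound.

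The main technical obstacle is executing the rescaling rigorously: one must verify that the worst-case instance of Theorem \ref{thm:pure} can be simultaneously normalized so that (i) $\|x^*\|\leq R_x$ and $\|y^*\|\leq R_y$, and (ii) the Lipschitz parameters remain exactly $L_x,L_y,L_{xy}$. A two-parameter coordinate change $(x,y)\mapsto(\alpha x,\beta y)$, combined with a scalar rescaling of the linear term $b$ in \eqref{prob:worst-pure}, furnishes three scalar degrees of freedom to meet the three Lipschitz and two norm constraints; the residual freedom matches exactly our choices of $\mu_x,\mu_y$. Once this bookkeeping is done, Theorem \ref{thm:pure} applies verbatim and the computation above completes the proof.
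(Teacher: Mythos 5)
Your proposal follows the same scaling strategy as the paper (pick $\mu_x\propto\epsilon/R_x^2$, $\mu_y\propto\epsilon/R_y^2$, rescale the worst-case instance of Theorem~\ref{thm:pure}), but the ``bookkeeping'' you defer to at the end is not routine, and the specific recipe you sketch does not close.

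First, your condition (ii) — that the rescaling should preserve the Lipschitz parameters \emph{exactly} — is not generally achievable together with $\|x^*\|=R_x$, $\|y^*\|=R_y$. A scaling $F_\epsilon(x,y)=a\hat F(cx,dy)$ transforms $(L_x,L_y,L_{xy})\mapsto(ac^2L_x,\,ad^2L_y,\,acdL_{xy})$, and forcing all three to equal the original values requires $ac^2=ad^2=acd=1$, hence $c=d$. But then the two norm constraints $\|\hat x^*\|/c=R_x$ and $\|\hat y^*\|/c=R_y$ can only both hold when $\|\hat x^*\|/\|\hat y^*\|=R_x/R_y$, which the worst-case instance has no reason to satisfy. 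Adding a scalar rescaling of $b$ does not help: it just rescales $(\hat x^*,\hat y^*)$ by a common factor, leaving the ratio $\|\hat x^*\|/\|\hat y^*\|$ unchanged. The paper therefore only enforces the Lipschitz constants as \emph{inequalities}, picking $a=\min\{c^{-2},d^{-2}\}$, which is enough for membership in $\cF_0$.

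Second — and this is the genuine gap — once you accept that the scaled instance has strong convexity $ac^2\mu_x$ (with possibly $ac^2<1$), your chain ``duality gap $\geq q^{2k}\mu_xR_x^2/32 = 2\epsilon q^{2k}$, so need $q^{2k}<1/2$'' is no longer justified: the correct bound from Theorem~\ref{thm:pure} is $q^{2k}\cdot(ac^2\mu_x)R_x^2/32$, so you need $q^{2k} < 1/(2ac^2)$, and the resulting iteration bound carries a factor $\ln(2ac^2)$. If $ac^2$ were tiny, the bound could collapse to nothing. The paper's proof handles this by explicitly showing, via the structure of the worst-case instance and the estimate $1-q=\Theta\bigl(\epsilon/(L_{xy}R_xR_y)\bigr)$, that $\|\hat y^*\|/\|\hat x^*\|=\cO(R_y/R_x)$ and hence $ac^2\geq\Omega(1)$ (Appendix~\ref{appdx:Omega-1}). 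This is a quantitative fact about the particular construction, not a consequence of degree-of-freedom counting, and your proposal would need to supply it to be complete.
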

\begin{proof} 
	We start the reduction by the following scaling argument. First, for any $\epsilon>0$, let $\hat F_\epsilon\in\cF(L_x, L_y, L_{xy},\mu_x,\mu_y)$ be the worst-case instance described by Theorem \ref{theorem:gen-pure}. For our purpose, we choose
	$$\mu_x = 64\epsilon/R_x^2\qquad\mbox{and}\qquad\mu_y = 64\epsilon/R_y^2.$$ 
	Then by direct computation, we know that the following scaled problem satisfies
	$$F_\epsilon(x,y): = a \hat F_\epsilon(cx,dy)\in\cF(ac^2 L_x,ad^2 L_y,acd L_{xy},ac^2 \mu_x,ad^2 \mu_y).$$
	We skip the parameter $b$ since it is already used in the construction of the worst case instance $\hat F_\epsilon$. Denote $(\hat x^*,\hat y^*) = \min_{x}\max_y \hat F_\epsilon(x,y)$ and $(x^*,y^*) = \min_{x}\max_y F_\epsilon(x,y)$. Let us set 
	$$c = \frac{\|\hat x^*\|}{R_x}, \quad d = \frac{\|\hat y^*\|}{R_y},\quad\mbox{and}\quad a = \min\{c^{-2},d^{-2}\}.$$
	Then we have $x^* = \frac{R_x\hat x^*}{\|\hat x^*\|}$ and $y^* = \frac{R_y\hat y^*}{\|\hat y^*\|}$, and the Lipschitz constants of $F_\epsilon$ satisfy that
	$ac^2 L_x\leq L_x$, $ad^2 L_y\leq L_y$, and $acd L_{xy}\leq L_{xy}$. 
	Therefore, we know 
	$$F_\epsilon\in\cF(ac^2 L_x,ad^2 L_y,acd L_{xy},ac^2 \mu_x,ad^2 \mu_y)\cap\cF_0(L_x,L_y,L_{xy},R_x,R_y).$$
    Note that  purely scaling the variables and the function does not change the worst-case nature of this problem. In other words, $F_\epsilon$ is still the worst-case problem instance of the function class $\cF(ac^2 L_x,ad^2 L_y,acd L_{xy},ac^2 \mu_x,ad^2 \mu_y)$ and the lower bound of Theorem \ref{thm:pure} is valid for this specific instance.  Therefore, to get the duality gap less than or equal to $\epsilon$, the number of iteration $k$ is lower bounded by 
    \begin{eqnarray}
    \label{cor:pure-cc-1}
    k & \geq &  \Omega\left(\sqrt{\frac{ac^2L_x}{ac^2\mu_x} + \frac{a^2c^2d^2L_{xy}^2}{ac^2\mu_x\cdot ad^2\mu_y} + \frac{ad^2L_y}{ad^2 \mu_y}}\cdot\ln\left(\frac{ac^2\mu_x\|x^*-x^0\|^2}{32\epsilon}\right)\right)\\
    & \overset{(i)}{=} & \Omega\Bigg(\bigg(\sqrt{\frac{L_xR_x^2}{\epsilon}} + \frac{L_{xy}R_xR_y}{\epsilon}+\sqrt{\frac{L_yR_y^2}{\epsilon}}\bigg)\cdot\ln\left(2ac^2\right)\Bigg)\nonumber
    \end{eqnarray}
    where (i) is because $x^0 = 0$, $\|x^*\| = R_x$, and $\mu_x = 64\epsilon/R_x^2$. Therefore, as long as we can show that $\ln\left(2ac^2\right) \geq \Omega(1)$, then the corollary is proved. However, since the details are rather technical, we shall provide a proof of $\ln\left(2ac^2\right) \geq \ln 2 $ in Appendix \ref{appdx:Omega-1}.
\end{proof}
As a remark, by setting $L_y = 0$, the lower bound \eqref{cor:pure-cc} implies the result derived in \cite{YYXu}. When $L_x = L_y = L_{xy} = L$, the lower bound \eqref{cor:pure-cc} implies the result derived in \cite{nemirovsky1992information}. The reduction for the general pure first-order algorithm class defined by \eqref{defn:Det-Pure-AlgoClass} without the linear span assumption can also be done in a similar manner and is omitted for succinctness. 

\subsection{Lower bound for proximal algorithm class}
Like Definition \ref{definition:F-cc}, we define a new bilinear problem class with bounded saddle point solution as follows. 
\begin{definition}(Problem class $\cB_0(L_{xy},R_x,R_y)$)
	We say a function $F$ belongs to the function class $\cB_0(L_{xy},R_x,R_y)$ as long as: (i). $F\in\cB(L_{xy},0,0).$ (ii). Solution $(x^*,y^*) = \argmin_x\argmax_y F(x,y)$ exists, and $\|x^*\|\leq R_x$, $\|y^*\|\leq R_y$.
\end{definition}
For this problem class, we have the following lower bound result, as a corollary of Theorem \ref{theorem:gen-prox}. 
\begin{corollary}
	\label{corollary:prox-cc}
	Consider applying the proximal algorithm class defined by \eqref{defn:Det-Prox-AlgoClass} to the problem class $\cB_0(L_{xy},R_x,R_y)$. For any $\epsilon>0$, there exists an instance $F_\epsilon(x,y)\in\cB_0(L_{xy},R_x,R_y)$, such that 
	\begin{eqnarray}
	\label{cor:prox-cc}
	\Omega\left(\,\,\frac{L_{xy}R_xR_y}{\epsilon} \,\,\right)
	\end{eqnarray}	
	iterations are required to reduce the duality gap to  $\epsilon$.
\end{corollary}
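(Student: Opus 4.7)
The argument follows the same blueprint as Corollary~\ref{corollary:pure-cc}: we rescale the strongly convex--strongly concave worst case of Theorem~\ref{theorem:gen-prox} into an instance of $\cB_0(L_{xy},R_x,R_y)$ and translate the iteration-complexity lower bound through the scaling.

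Concretely, first I would set $\mu_x := 64\epsilon/R_x^2$ and $\mu_y := 64\epsilon/R_y^2$ and let $\hat F_\epsilon\in\cB(L_{xy},\mu_x,\mu_y)$ be the worst-case instance furnished by Theorem~\ref{theorem:gen-prox}, with saddle point $(\hat x^*,\hat y^*)$ (whose norms can be controlled by choosing the ambient dimension $n$ large, via Lemma~\ref{lemma:apxsolu-proximal}). Define
\begin{equation*}
F_\epsilon(x,y) \;:=\; a\,\hat F_\epsilon(cx,dy),\qquad c:=\|\hat x^*\|/R_x,\ \ d:=\|\hat y^*\|/R_y,\ \ a:=\min\{c^{-2},d^{-2}\}.
\end{equation*}
A direct expansion shows $F_\epsilon\in\cB(acd\,L_{xy},\,ac^2\mu_x,\,ad^2\mu_y)$ with saddle point $(\hat x^*/c,\hat y^*/d)$ of norms exactly $R_x$ and $R_y$. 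Since $a\le(cd)^{-1}$, we have $acd\,L_{xy}\le L_{xy}$, so $F_\epsilon\in\cB_0(L_{xy},R_x,R_y)$.

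Next I would transfer the lower bound. Any algorithm $\cA$ from the general proximal class in Definition~\ref{defn:Det-Prox-AlgoClass} applied to $F_\epsilon$ is equivalent, via the (deterministic, affine) change of variables, to a general proximal algorithm applied to $\hat F_\epsilon$ with identical iteration count, and the duality gap on $F_\epsilon$ is $a$ times that on $\hat F_\epsilon$. Crucially, the ratio $\mu_x\mu_y/L_{xy}^2$ and hence the contraction factor $q$ of Theorem~\ref{theorem:gen-prox} are invariant under this scaling. Applying Theorem~\ref{theorem:gen-prox} to $\hat F_\epsilon$ with target gap $\epsilon/a$, and simplifying using $\sqrt{\mu_x\mu_y}/L_{xy}=64\epsilon/(L_{xy}R_xR_y)$ together with the expansion $\ln(q^{-1})=2\sqrt{\mu_x\mu_y/L_{xy}^2}\,(1+o(1))$ valid as $\epsilon\to 0$, yields
\begin{equation*}
k \;\ge\; \frac{L_{xy}R_xR_y}{128\,\epsilon}\,\cdot\,\ln(2ad^2)\,\cdot\,(1+o(1)).
\end{equation*}

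The remaining step, which I expect to be the main obstacle (exactly as in the appendix for Corollary~\ref{corollary:pure-cc}), is to verify that $\ln(2ad^2)=\Omega(1)$. Since $a=\min\{c^{-2},d^{-2}\}=(\max\{c,d\})^{-2}$, at least one of $ac^2$ or $ad^2$ equals $1$. If $d\ge c$, then $ad^2=1$ and $\ln(2ad^2)=\ln 2>0$. If $d<c$, I would instead invoke the transposed worst case obtained by swapping the roles of $x$ and $y$ in problem~\eqref{prob:worst-proximal} (which remains in $\cB(L_{xy},\mu_x,\mu_y)$ by the symmetry of the bilinear form), and apply the analogous $x$-side distance bound derived in the same manner as Lemma~\ref{lemma:lowerbound-distance-prox}; this gives $\ln(2ac^2)=\ln 2$. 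In either case $k\ge\Omega(L_{xy}R_xR_y/\epsilon)$, as claimed.
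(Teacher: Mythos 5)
Your overall scaling argument is correct and mirrors the paper's treatment of the pure first-order case (Corollary~\ref{corollary:pure-cc}): setting $\mu_x=64\epsilon/R_x^2$, $\mu_y=64\epsilon/R_y^2$, rescaling by $a,c,d$ to land in $\cB_0(L_{xy},R_x,R_y)$, and observing that $\ln(q^{-1})^{-1}=\Omega(L_{xy}/\sqrt{\mu_x\mu_y})=\Omega(L_{xy}R_xR_y/\epsilon)$ all goes through. You also correctly identify $\ln(2ad^2)=\Omega(1)$ as the only nontrivial step. The paper itself omits the proof of this corollary ("much simpler... details omitted"), so your attempt to spell it out is welcome.

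However, the final paragraph has a genuine gap. The scaling parameters $a,c,d$ are tied to the saddle point of the \emph{specific} worst-case instance: $c=\|\hat x^*\|/R_x$, $d=\|\hat y^*\|/R_y$. When you switch to the transposed instance (linear term on the $x$-side), its saddle point $(\tilde x^*,\tilde y^*)$ is a different vector pair, so you must rescale with $\tilde c=\|\tilde x^*\|/R_x$, $\tilde d=\|\tilde y^*\|/R_y$, $\tilde a=\min\{\tilde c^{-2},\tilde d^{-2}\}$ — not the original $a,c$. Using $\tilde x^*_i\approx q^i/(1-q)$, $\hat y^*_i\approx q^i/(1-q)$, $\hat x^*=-\frac{L_{xy}}{2\mu_x}A\hat y^*$, $\tilde y^*=\frac{L_{xy}}{2\mu_y}A\tilde x^*$, and $\mu_x R_x^2=\mu_y R_y^2$, one finds $\tilde c/\tilde d = d/c$. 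Hence in your "bad" case $d<c$ one also has $\tilde c<\tilde d$, so $\tilde a\tilde c^2=\tilde c^2/\tilde d^2<1$: the transposed instance triggers the \emph{same} bad case, and the swap buys you nothing. The sentence "this gives $\ln(2ac^2)=\ln 2$" implicitly reuses the original instance's $a,c$ for the transposed instance, which is not licit.

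The gap is repairable in two ways. (i) Direct verification, as the paper does in Appendix~\ref{appdx:Omega-1} for the pure case: here it is indeed simpler. Using $(A\hat y^*)_i=q^{n+1-i}$ (up to $O(q^n)$ corrections) one gets $\|A\hat y^*\|^2/\|\hat y^*\|^2\to(1-q)^2$ as $n\to\infty$, and since $(1-q)^2=\alpha q$ with $\alpha=4\mu_x\mu_y/L_{xy}^2$, a short computation gives $c^2/d^2=\frac{R_y^2\|\hat x^*\|^2}{R_x^2\|\hat y^*\|^2}=\frac{L_{xy}^2(1-q)^2}{4\mu_x\mu_y}=q\le 1$. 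So $d\ge c$ always holds for $n$ large, $ad^2=1$, and no case split is needed. (ii) Alternatively, if you really wanted a second bound to cover $d<c$, you should derive the $x$-side distance bound for the \emph{original} instance (not the transposed one): Lemma~\ref{lemma:subspace-prox} already shows $\cH_x^k$ is confined to the span of $e_n,e_{n-1},\dots$, and $\hat x^*\propto A\hat y^*$ has exponentially decaying coordinates from index $n$ downward, so $\|x^k-x^*\|^2\ge q^{\Omega(k)}\|x^*\|^2$ holds by the same tail argument as Lemma~\ref{lemma:lowerbound-distance-prox}; after scaling that gives $\ln(2ac^2)$, which equals $\ln 2$ precisely when $d\le c$.

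Finally, the paper's argument passes through $\ln(2ac^2\mu_x\|x^*\|^2/(32\epsilon))$-type logs; your simplification $\ln(q^{-1})=2\sqrt{\mu_x\mu_y/L_{xy}^2}(1+o(1))$ is correct and slightly cleaner than the paper's inequality $(\ln q^{-1})^{-1}\ge q/(1-q)$ used in Appendix~\ref{appendix:r-complexity-proximal}, but is not where the issue lies.
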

\begin{remark}
	The lower bound in Corollary \ref{corollary:pure-cc} is tight. An optimal algorithm is derived in \cite{chambolle2011first,chambolle2016ergodic}. 
\end{remark}
The reduction can be done in a similar way as in Corollary \ref{corollary:pure-cc}, but is much simpler. The details are omitted here.


\section{Conclusion}\label{sec:conclusion}
In this paper, we establish the lower complexity bound for the first-order methods in solving strongly convex and strongly concave saddle point problems. Different from existing results, we discuss the problem in the most general parameter regime. For the bilinear coupling problem class $\cB(L_{xy},\mu_x,\mu_y)$ and for both proximal algorithm class \eqref{eqn:Prox-AlgoClass} with linear span assumption and the general proximal algorithm class \eqref{eqn:Det-Prox-AlgoClass} without linear span assumption, a tight lower bound is  established. For general coupling problem class  $\cF(L_x,L_y,L_{xy},\mu_x,\mu_y)$ and for both the pure first-order algorithm class \eqref{eqn:Pure-AlgoClass} with linear span assumption and the general pure first-order algorithm class \eqref{eqn:Det-Pure-AlgoClass} without the linear span assumption, a lower bound has been established. Under various special parameter regimes, tight upper-bounds can be developed. In the most general setting of the min-max framework, a near optimal algorithm has been discovered, while the optimal algorithm that exactly matches the lower bound has yet to be discovered. Finally, we also show that our result implies several exisiting lower bounds for general convex-concave problems through proper scaling of the worst-case instance, which indicates the generality of our results.

\textbf{Acknowledgement}. We thank the two anonymous reviewers for their insightful suggestions on orthogonal invariance argument for breaking the linear span assumption and the suggestion on applying scaling to obtain lower bounds for general convex-concave problems.

\bibliography{Saddle}
\bibliographystyle{plain}
\appendix
\section{Proof of Lemma \ref{lemma:lowerbound-distance-prox}}
\label{appendix-lowerbound-distance-prox}
By the subspace characterization \eqref{eqn:subspace-prox}, we have
$$\|y^{2k}-\hat y^*\| \geq \sqrt{\sum^n_{j = k+1} (\hat y^*_j)^2} = \frac{q^{k}}{1-q}\sqrt{q^2 + q^4 + \cdots + q^{2(n-k)}}\geq \frac{q^k}{\sqrt{2}}\|\hat y^*\|= \frac{q^k}{\sqrt{2}}\|y^0-\hat y^*\|,$$
where the last inequality is due to the fact that $q\leq 1, k\leq \frac{n}{2}$ and $y^0 = 0$. Note that by Lemma \ref{lemma:apxsolu-proximal}, if we require $n\geq 2\log_q\left(\frac{\alpha}{4\sqrt{2}}\right)$, then we can guarantee that \begin{equation}
\label{eqn:err-n-proximal}
\|\hat y^* - y^*\| \leq \frac{q^{n+1}}{\alpha(1-q)}\leq  \frac{q^{\frac{n}{2}}}{\alpha}\cdot q^k\cdot\frac{q}{(1-q)} \leq \frac{1}{4}\cdot \frac{q^k}{\sqrt{2}}\|y^0-\hat y^*\|\quad\mbox{ for }\quad \forall 1\leq k\leq n/2,
\end{equation}
where the last inequality is due to $\frac{q^{\frac{n}{2}}}{\alpha}\leq\frac{1}{4\sqrt{2}}$ and $q/(1-q)\leq \|y^0-\hat y^*\|$. Therefore, we have
\begin{eqnarray}
\label{eqn:dist-lower-proximal}
\|y^{2k}-y^*\|^2 & \geq & (\|y^{2k}-\hat y^*\|-\|\hat y^*- y^*\|)^2\\
& \geq & \|y^{2k}-\hat y^*\|^2 - 2\|y^{2k}-\hat y^*\|\|\hat y^*- y^*\|\nonumber\\
& \geq & \min_t\left\{t^2 - 2\|\hat y^*- y^*\|t: t\geq \delta_k:= \frac{q^k}{\sqrt{2}}\|y^0-\hat y^*\|\right\}\nonumber\\
& = & \delta_k(\delta_k - 2\|\hat y^*- y^*\|)\nonumber\\
& \geq & \frac{1}{2}\delta_k^2 = \frac{q^{2k}}{4}\|y^0-\hat y^*\|^2, \nonumber
\end{eqnarray}
where the fourth line is due to that $d(t^2 - 2\|\hat y^*-y^*\|t)/dt = 2(t-\|\hat y^*-y^*\|)\geq0$ when $t\geq\delta_k$. Hence the quadratic function is monotonically increasing in the considered interval. In addition, we also have
$$\|y^0-y^*\| \leq \|y^0-\hat y^*\| + \|\hat y^*-y^*\|\leq \|y^0-\hat y^*\| + \frac{q^n}{\alpha}\cdot\frac{q}{1-q}\leq (1 + q^n/\alpha)\|y^0-\hat y^*\|\leq 2\|y^0-\hat y^*\|,$$
where the third inequality is due to that $\|y^0-\hat y^*\|\geq \hat y^*_1 = q/(1-q)$. For the last inequality, if $\alpha\geq 1$, then $q^n/\alpha<1$; if $\alpha\leq 1$, then  $q^n/\alpha\leq \alpha/32\leq1$ since $n\geq 2\log_q\left(\frac{\alpha}{4\sqrt{2}}\right)$. Combining the above two inequalities,  the desired bound \eqref{eqn:lowB-dis-prox} follows.
\section{Proof of Proposition \ref{remark:complexity-proximal}}
\label{appendix:r-complexity-proximal}
Here we only prove the last inequality of \eqref{eqn:lowerbound-proximal-1stAlgo}. Due to the fact that $(\ln(1+z))^{-1}\geq 1/z$ for $\forall z>0$, we know
\begin{eqnarray*}
(\ln(q^{-1}))^{-1} & = & (\ln(1 + (1-q)/q))^{-1} \geq \frac{q}{1-q}\\
& = &   \frac{1+\frac{2\mu_x\mu_y}{L_{xy}^2} - 2\sqrt{\left(\frac{\mu_x\mu_y}{L_{xy}^2}\right)^2 + \frac{\mu_x\mu_y}{L_{xy}^2}}}{2\sqrt{\left(\frac{\mu_x\mu_y}{L_{xy}^2}\right)^2 + \frac{\mu_x\mu_y}{L_{xy}^2}}-\frac{2\mu_x\mu_y}{L_{xy}^2}}\\
& = & \frac{\sqrt{\left(\frac{\mu_x\mu_y}{L_{xy}^2}\right)^2 + \frac{\mu_x\mu_y}{L_{xy}^2}}-\frac{\mu_x\mu_y}{L_{xy}^2}}{\frac{2\mu_x\mu_y}{L_{xy}^2}}\\
& = &  \half\sqrt{\frac{L_{xy}^2}{\mu_x\mu_y} + 1} - \frac{1}{2},\\
& = & \Omega\left(\sqrt{\frac{L_{xy}^2}{\mu_x\mu_y}}\right)
\end{eqnarray*}
which completes the proof.

\section{Proof of Theorem \ref{theorem:gen-prox}}
\label{appdx:gen-prox} 
	Before proceeding the proof, let us first quote a lemma from \cite{YYXu}. 
	\begin{lemma}[Lemma 3.1, \cite{YYXu}] 
		\label{lemma:yyx} 
		Let $\cX\subsetneqq\bar{\cX}\subseteqq\R^{p}$ be two linear subspaces. Then for any $\bar x\in\R^p$, there exists an orthogonal matrix $\Gamma\in\R^{p\times p}$ s.t. $\Gamma x = x, \forall x\in\cX$ and $\Gamma \bar x\in\bar{\cX}$.
	\end{lemma}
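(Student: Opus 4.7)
The plan is to construct $\Gamma$ explicitly as a block-diagonal orthogonal map that fixes $\cX$ pointwise and acts as a suitable reflection on $\cX^{\perp}$. I would begin by decomposing $\R^{p} = \cX \oplus \cX^{\perp}$ and writing $\bar x = x_{\parallel} + x_{\perp}$ with $x_{\parallel} \in \cX$ and $x_{\perp} \in \cX^{\perp}$. The requirement that $\Gamma x = x$ for all $x \in \cX$ forces $\Gamma$ to leave $\cX$ invariant, and since $\Gamma$ is orthogonal, it must also leave $\cX^{\perp}$ invariant; hence $\Gamma$ necessarily has the block form $\Gamma = I_{\cX} \oplus Q$ for some orthogonal map $Q$ on $\cX^{\perp}$. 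Consequently $\Gamma \bar x = x_{\parallel} + Q x_{\perp}$, and since $x_{\parallel} \in \cX \subseteq \bar\cX$, the desired inclusion $\Gamma \bar x \in \bar\cX$ reduces to finding an orthogonal $Q$ on $\cX^{\perp}$ with $Q x_{\perp} \in \bar\cX \cap \cX^{\perp}$.

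Next, I would use the strict inclusion $\cX \subsetneq \bar\cX$ to exploit the subspace $\cU := \bar\cX \cap \cX^{\perp}$. Because $\bar\cX$ strictly contains $\cX$, we have $\dim \cU \geq 1$, so I may pick some unit vector $u \in \cU$. If $x_{\perp} = 0$, then $Q = I_{\cX^{\perp}}$ works trivially. Otherwise, set $w := x_{\perp} / \|x_{\perp}\| \in \cX^{\perp}$ and define a Householder reflection within $\cX^{\perp}$ of the form $Q = I_{\cX^{\perp}} - 2 v v^{\top}$, where $v := (w - u) / \|w - u\|$ whenever $w \neq u$ (and $Q = I_{\cX^{\perp}}$ when $w = u$). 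A direct calculation gives $Q w = u$, hence $Q x_{\perp} = \|x_{\perp}\| \, u \in \cU \subseteq \bar\cX$, as required. Gluing with the identity on $\cX$ yields $\Gamma := I_{\cX} \oplus Q$, which is orthogonal on $\R^{p}$, satisfies $\Gamma x = x$ for all $x \in \cX$, and sends $\bar x$ into $\bar\cX$.

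The only genuine subtlety is justifying that the blocks glue into a well-defined orthogonal operator on $\R^{p}$; this is routine since $\cX$ and $\cX^{\perp}$ are orthogonal and span $\R^{p}$, and both $I_{\cX}$ and the Householder $Q$ are orthogonal on their respective subspaces. Edge cases (namely $x_{\perp} = 0$, or $w = u$, or $w = -u$) are handled by either taking $Q = I_{\cX^{\perp}}$ or by verifying that the Householder formula remains well-defined and orthogonal. Thus the main work is really the algebraic decomposition and the standard Householder construction, with no deep obstacle beyond bookkeeping of the degenerate configurations.
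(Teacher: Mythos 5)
The paper does not prove this lemma itself; it is quoted verbatim as Lemma 3.1 from \cite{YYXu} and used as a black box in Appendix C, so there is no in-paper argument to compare against. Your proof is correct and self-contained: the orthogonal-complement decomposition $\R^p = \cX\oplus\cX^\perp$, the observation that $\cX\subseteq\bar\cX$ forces $\bar\cX = \cX\oplus(\bar\cX\cap\cX^\perp)$ so that $\cU:=\bar\cX\cap\cX^\perp$ has dimension at least one, and the Householder reflection on $\cX^\perp$ sending the unit vector $w=x_\perp/\|x_\perp\|$ to a unit vector $u\in\cU$ (glued with the identity on $\cX$) together give the desired orthogonal $\Gamma$. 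The block-diagonal structure automatically forces $\Gamma$ to fix $\cX$ pointwise, and $\Gamma\bar x = x_\parallel + \|x_\perp\|\,u \in \cX + \cU\subseteq\bar\cX$ as required. The degenerate cases ($x_\perp=0$ and $w=u$, for which $Q=I_{\cX^\perp}$ suffices) are dispatched correctly; you flag $w=-u$ as a potential edge case, but in fact the Householder formula already sends $-u\mapsto u$ there, so no special treatment is needed. One small presentational point: you claim that any valid $\Gamma$ \emph{must} have the block form $I_\cX\oplus Q$, which is true (an orthogonal map fixing $\cX$ pointwise preserves $\cX^\perp$), but that necessity is not needed for existence; it would be equally clean to simply define $\Gamma$ in block form and verify the three required properties.
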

	Note that for an orthogonal matrix $\Gamma$, if $\Gamma x = x$, then we also have $\Gamma^\top x = x$.  Now let us start our proof of Theorem \ref{theorem:gen-prox}.

\begin{proof}
		To prove this theorem, we only need to show
		$$\{(x^0,y^0),...,(x^k,y^k)\}\subseteq U^\top\cH_x^{4k-1}\times V^\top\cH_y^{4k-1}\qquad\mbox{and}\qquad(\tilde x^k,\tilde y^k)\in U^\top\cH_x^{4k+1}\times V^\top\cH_y^{4k+1}.$$
		We separate the proof into two parts.
		
		\textbf{Part I. There exist orthogonal matrices $\hat U$, $\hat V$ s.t. when $\cA$ is applied to the rotated instance $F_{\hat U,\hat V}$, $\{(x^0,y^0),...,(x^k,y^k)\}\subseteq \hat U^\top\cH_x^{4k-1}\times \hat V^\top\cH_y^{4k-1}.$}
	
		Let $\theta = (L_{xy},\mu_x,\mu_y)$ be the set of algorithmic parameters. To prove the result, let us construct the worst-case function $F_{U,V}$ in a recursive way. 	 \\
		\textbf{Case $k = 1$:} Let us define $U_0 = V_0 = I$. When $\cA$ is applied to the function $F_{U_0,V_0}\in\cB(L_{xy}, \mu_x, \mu_y)$, the iterate sequence is 
		$(x_{0}^0, y_{0}^0) = (0, 0)$ and 
		$$\begin{cases}
		u^1_0 = \cA_u^1(\theta; x_0^0, U_0^\top A V_0 y_0^0), \qquad (x^1_0, \tilde x^1_0) = \cA_x^1(\theta; x^0_0,  U_0^\top A V_0 y_0^0, \mathbf{prox}_{\gamma_1f}(u^1_0)),\\
		v^1_0 = \cA_v^1(\theta; y_0^0, V^\top_0 A U_0x_0^0),\qquad
		(y^1_0, \tilde y^1_0) = \cA_y^1(\theta; y^0_0, V^\top_0 A U_0x_0^0, \mathbf{prox}_{\sigma_1g}(v^1_0)).
		\end{cases}$$
		By Lemma \ref{lemma:yyx}, there exists orthogonal matrices $\Gamma_x^0$ and $\Gamma_y^0$ such that 
		$\Gamma_x^0x^1_0\in\cH_x^3 = \mathrm{Span}\{Ab\}$, $\Gamma_y^0y^1_0\in\cH_y^3=\mathrm{Span}\{b, A^2b\}$, and $\Gamma_y^0b = (\Gamma_y^0)^\top b = b.$ That is 
		\begin{equation}
		\label{thm:gen-prox-1}
		x_0^1\in U_1^\top\cH_x^3,\qquad\mbox{and}\qquad y_0^1\in V_1^\top\cH_y^3, \quad V_1 b = V_1^\top b = b,
		\end{equation}	
		where $U_1 = U_0\Gamma_x^0$ and $V_1 = V_0\Gamma_y^0$. 
	
		Now we prove that when we apply the algorithm $\cA$ to $F_{U_1,V_1}$, the generated iterates $\{(x^0_1, y^0_1), (x^1_1, y^1_1)\}$ satisfy that
		$(x^0_1, y^0_1) = (0, 0)$ and  $(x^1_1, y^1_1) = (x^1_0, y^1_0)$. That is, the first two iterates generated by $\cA$ is completely the same for $F_{U_0,V_0}$ and $F_{U_1,V_1}$. The reason is because 
		$u^1_1 = \cA_u^1(\theta; x^0_1, U_1^\top AV_1y^0_1) = \cA_u^1(\theta; 0, 0) = \cA_u^1(\theta; x^0_0, U_0^\top AV_0y^0_0)  = u^1_0$, 
		therefore 
		\begin{eqnarray*}
			(x^1_1, \tilde x^1_1) & = & \cA_x^1(\theta; x^0_1, U_1^\top A V_1 y_1^0, \mathbf{prox}_{\gamma_1f}(u^1_1)) \\
			& = & \cA_x^1(\theta; 0, 0, \mathbf{prox}_{\gamma_1f}(u^1_1)) \\
			& = & \cA_x^1(\theta; x^0_0, U_0^\top A V_0 y_0^0, \mathbf{prox}_{\gamma_1f}(u^1_0)) \\
			& = & (x^1_0, \tilde x^1_0).
		\end{eqnarray*}
		Through similar argument, we know $(y^1_1,\tilde y^1_1) = (y^1_0,\tilde y^1_0)$. Therefore, \eqref{thm:gen-prox-1} indicates that 
		\begin{equation}
		\label{thm:gen-prox-2}
		x_1^1\in U_1^\top\cH_x^3,\qquad\mbox{and}\qquad y_1^1\in V_1^\top\cH_y^3, \quad V_1 b = V_1^\top b = b\in V_1^\top\cH_y^3.
		\end{equation}
		\textbf{Case $k=2$.} For the ease of the readers to follow, we perform one extra step of discussion for $k=2$, before presenting the construction on general $k$. 
	    
		For the problem instance $F_{U_1,V_1}$, the iterates generated by $\cA$ are $(x_{1}^0, y_{1}^0) = (0, 0)$ and 
		$$\begin{cases}
		u^1_1 = \cA_u^1(\theta; x_1^0, U_1^\top A V_1 y_1^0), \qquad (x^1_1, \tilde x^1_1) = \cA_x^1(\theta; x^0_1,  U_1^\top A V_1 y_1^0, \mathbf{prox}_{\gamma_1f}(u^1_1)),\\
		v^1_1 = \cA_v^1(\theta; y_1^0, V^\top_1 A U_1x_1^0),\qquad
		(y^1_1, \tilde y^1_1) = \cA_y^1(\theta; y^0_1, V^\top_1 A U_1x_1^0, \mathbf{prox}_{\sigma_1g}(v^1_1)).\\
		\end{cases}$$
		$$\begin{cases}
		u^2_1 = \cA_u^2(\theta; x_1^0, U_1^\top A V_1 y_1^0, x_1^1, U_1^\top A V_1 y_1^1), \quad (x^2_1, \tilde x^2_1) = \cA_x^2(\theta; x^0_1,  U_1^\top A V_1 y_1^0, x_1^1, U_1^\top A V_1 y_1^1, \mathbf{prox}_{\gamma_2f}(u^2_1)),\\
		v^2_1 = \cA_v^2(\theta; y_1^0, V^\top_1 A U_1x_1^0, y_1^1, V^\top_1 A U_1x_1^1),\quad
		(y^2_1, \tilde y^2_1) = \cA_y^2(\theta; y^0_1, V^\top_1 A U_1x_1^0, y_1^1, V^\top_1 A U_1x_1^1, \mathbf{prox}_{\sigma_2g}(v^2_1)).\\
		\end{cases}$$
		Note that  $x_1^1 \in U_1^\top\cH_x^3 \subsetneqq U_1^\top\cH_x^5\subsetneqq U_1^\top\cH_x^7$ and $\{y_1^1,b\}\subsetneqq V_1^\top\cH_y^3\subsetneqq V_1^\top\cH_y^5\subsetneqq V_1^\top\cH_y^7$. Therefore, there exist orthogonal matrices $\Gamma_x^1$ and $\Gamma_y^1$ such that 
		\begin{equation}
		\label{thm:gen-prox-3}
		\begin{cases}
		\Gamma_x^1x = (\Gamma_x^1)^\top x = x, \,\,\forall x\in U_1^\top\cH_x^5,\,\,\,\, \Gamma_x^1 x_1^2\in U_1^\top\cH_x^7,\\
		\Gamma_y^1y \,= (\Gamma_y^1)^\top y = y, \,\,\,\forall y\in V_1^\top\cH_y^5,\,\,\,\, \Gamma_y^1 y_1^2\in V_1^\top\cH_y^7.
		\end{cases}
		\end{equation}
		Now, let us define 
		$$U_2 = U_1\Gamma_x^1\qquad\mbox{and}\qquad V_2 = V_1\Gamma_y^1.$$
		Now we prove that if $\cA$ is applied to $F_{U_2,V_2}$, the generated iterates $\{(x_{2}^0, y_{2}^0), (x_{2}^1, y_{2}^1), (x_{2}^2, y_{2}^2)\}$ satisfy $(x_{2}^0, y_{2}^0) = (0, 0)$, $(x_{2}^1, y_{2}^1) = (x_{1}^1, y_{1}^1)$, and  $(x_{2}^2, y_{2}^2) = (x_{1}^2, y_{1}^2)$. The argument for $(x_{2}^1, y_{2}^1) = (x_{1}^1, y_{1}^1)$ is almost the same as  that of the case $k=1$. We only provide the proof for $(x_{2}^2, y_{2}^2) = (x_{1}^2, y_{1}^2)$.
	
	Next, we need to show $u_2^2 = u_1^2$, which can be proved by arguing that all the inputs to $\cA_u^2$ are the same for both $u_2^2$ and $u_1^2$. First, it is straightforward that $x_1^0 = 0 = x^0_2, U_1^\top AV_1 y_1^0 = 0 = U_2^\top AV_2 y_2^0$. By previous argument $x_2^1 = x_1^1$.  Finally, consider the last input $U_2^\top AV_2 y_2^1$, because $y_2^1 = y_1^1\in V_1^\top\cH_y^3\subsetneqq V_1^\top\cH_y^5$, we have $\Gamma_y^1 y_2^1 = y_2^1 = y_1^1\in V_1^\top\cH_y^3.$ Then $V_2y_2^1 = V_1\Gamma_y^1y_2^1\in V_1V_1^\top\cH_y^3 = \cH_y^3.$ Therefore $U_1^\top AV_2y_2^1\in U_1^\top A\cH_y^3 = U_1^\top\cH_x^5$ and 
		$$U_2^\top AV_2y_2^1 = \Gamma_x^1U_1^\top AV_2y_2^1 = U_1^\top AV_2y_2^1 = U_1^\top AV_1\Gamma_y^1y_2^1 = U_1^\top AV_1y_1^1.$$ 
		Consequently, 
		\begin{eqnarray*}
			u_2^2  = \cA_u^2(\theta; x_2^0, U_2^\top AV_2 y_2^0, x_2^1, U_2^\top AV_2 y_2^1)
			=\cA_u^2(\theta; x_1^0, U_1^\top AV_1 y_1^0, x_1^1, U_1^\top AV_1 y_1^1)
			= u_2^1
		\end{eqnarray*}
		and 
		\begin{eqnarray*}
			(x_2^2,\tilde x_2^2)  & = & \cA_x^2(\theta; x_2^0, U_2^\top AV_2 y_2^0, x_2^1, U_2^\top AV_2 y_2^1,\mathbf{prox}_{\gamma_2f}(u^2_2))\\
			& = &\cA_x^2(\theta; x_1^0, U_1^\top AV_1 y_1^0, x_1^1, U_1^\top AV_1 y_1^1,\mathbf{prox}_{\gamma_2f}(u^2_1))\\
			& = & (x_1^2,\tilde x_1^2).
		\end{eqnarray*}
		Through a similar argument, we have $(y_2^2,\tilde y_2^2) = (y_1^2,\tilde y_1^2)$. By \eqref{thm:gen-prox-2} and \eqref{thm:gen-prox-3}, we have 
		\begin{equation}
		\label{thm:gen-prox-4}
		\{x_2^0,x_2^1,x_2^2\}\in U_2^\top\cH_x^7\qquad\mbox{and}\qquad\{b,y_2^0,y_2^1,y_2^2\}\in V_2^\top\cH_y^7.
		\end{equation}
		\textbf{Case $k$.} Suppose we already have orthogonal matrices $U_{k-1}, V_{k-1}$, such that when $\cA$ is applied to $F_{U_{k-1},V_{k-1}}$, we have
		\begin{equation}
		\label{thm:gen-prox-5}
		\{x_{k-1}^0,x_{k-1}^1,\cdots, x_{k-1}^{k-1}\}\in U_{k-1}^\top\cH_x^{4k-5}\qquad\mbox{and}\qquad\{b,y_{k-1}^0,y_{k-1}^1,\cdots,y_{k-1}^{k-1}\}\in V_{k-1}^\top\cH_y^{4k-5}.
		\end{equation}
		Again, by Lemma \ref{lemma:yyx}, there exist orthogonal matrices $\Gamma_x^{k-1}$ and $\Gamma_y^{k-1}$, such that 
		\begin{equation}
		\label{thm:gen-prox-6}
		\begin{cases}
		\Gamma_x^{k-1}x = (\Gamma_x^{k-1})^\top x = x, \,\,\forall x\in U_{k-1}^\top\cH_x^{4k-3},\,\,\,\, \Gamma_x^{k-1} x_{k-1}^{k}\in U_{k-1}^\top\cH_x^{4k-1},\\
		\Gamma_y^{k-1}y \,= (\Gamma_y^{k-1})^\top y = y, \,\,\,\forall y\in V_{k-1}^\top\cH_y^{4k-3},\,\,\,\, \Gamma_y^{k-1} y_{k-1}^k\in V_{k-1}^\top\cH_y^{4k-1}.
		\end{cases}
		\end{equation}
		Now we define that 
		$$U_k = U_{k-1}\Gamma_x^{k-1} \qquad\mbox{and}\qquad V_k = V_{k-1}\Gamma_y^{k-1}.$$
		Therefore, similar to our previous discussion, we only need to argue that when $\cA$ is applied to $F_{U_k,V_k}$, the generated iterates $\{(x_k^0,y_k^0), (x_k^1,y_k^1),\cdots,(x_k^k,y_k^k)\}$ satisfy $(x_k^i,y_k^i) = (x_{k-1}^i,y_{k-1}^i)$ for $i = 0,1,...,k$. We prove this argument by induction. First, it is straightforward that $(x_k^0,y_k^0) = (0,0) = (x_{k-1}^0,y_{k-1}^0)$. Suppose $(x_k^i,y_k^i) = (x_{k-1}^i,y_{k-1}^i)$ holds for $i = 0,1,...,j-1\leq k-1$, now we prove $(x_k^j,y_k^j) = (x_{k-1}^j,y_{k-1}^j)$, which is almost identical to the case $k=2$.
	
	For any $i\in\{0,1,...,j-1\}$, let us show $U_{k-1}^\top AV_{k-1} y_{k-1}^i = U_k^\top AV_k y_k^i$. Because $y_k^i = y_{k-1}^i\in V_{k-1}^\top\cH_y^{4k-5}\subsetneqq V_{k-1}^\top\cH_y^{4k-3}$, we have $\Gamma_y^{k-1} y_k^i = y_k^i = y_{k-1}^i\in V_{k-1}^\top\cH_y^{4k-5}.$ Then $V_ky_k^i = V_{k-1}\Gamma_y^{k-1}y_k^i\in V_{k-1}V_{k-1}^\top\cH_y^{4k-5} = \cH_y^{4k-5}.$ Therefore $U_{k-1}^\top AV_{k}y_k^i\in U_{k-1}^\top A\cH_y^{4k-5} = U_{k-1}^\top\cH_x^{4k-3}$ and 
		$$U_k^\top AV_ky_k^i = (\Gamma_x^{k-1})^\top U_{k-1}^\top AV_ky_k^i = U_{k-1}^\top AV_{k}y_k^i = U_{k-1}^\top AV_{k-1}\Gamma_y^{k-1}y_k^i = U_{k-1}^\top AV_{k-1}y_{k-1}^i,$$ 
		for $0\leq i\leq j-1$. Consequently, 
		\begin{eqnarray*}
			u_k^i  & = & \cA_u^i(\theta; x_k^0, U_k^\top AV_k y_k^0,..., x_k^{i-1}, U_k^\top AV_k y_k^{i-1})\\
			& = & \cA_u^i(\theta; x_{k-1}^0, U_{k-1}^\top AV_{k-1} y_{k-1}^0,..., x_{k-1}^{i-1}, U_{k-1}^\top AV_{k-1} y_{k-1}^{i-1})\\
			& = & u_{k-1}^i
		\end{eqnarray*}
		and 
		\begin{eqnarray*}
			(x_k^i,\tilde x_k^i)  & = & \cA_x^i(\theta; x_k^0, U_k^\top AV_k y_k^0,..., x_k^{i-1}, U_k^\top AV_k y_k^{i-1},\mathbf{prox}_{\gamma_if}(u^i_k))\\
			& = &\cA_x^2(\theta; x_{k-1}^0, U_{k-1}^\top AV_{k-1} y_{k-1}^0,..., x_{k-1}^{i-1}, U_{k-1}^\top AV_{k-1} y_{k-1}^{i-1},\mathbf{prox}_{\gamma_if}(u^i_{k-1}))\\
			& = & (x_{k-1}^i,\tilde x_{k-1}^i).
		\end{eqnarray*}
		Through a similar argument, we have $(y_k^i,\tilde y_k^i) = (y_{k-1}^i,\tilde y_{k-1}^i)$. By induction, we know $(y_k^i,\tilde y_k^i) = (y_{k-1}^i,\tilde y_{k-1}^i)$ for $i = 0,1,...,k$. Consequently, we have 
		\begin{equation}
		\label{thm:gen-prox-7}
		\{x_{k}^0,x_{k}^1,\cdots, x_{k}^{k}\}\in U_{k}^\top\cH_x^{4k-1}\qquad\mbox{and}\qquad\{b,y_{k}^0,y_{k}^1,\cdots,y_{k}^{k}\}\in V_{k}^\top\cH_y^{4k-1}.
		\end{equation}
	    By setting $\hat U = U_k$ and $\hat V = V_k$, we prove the result for Part I.\vspace{0.5cm}\\       
       	\textbf{Part II. There exist orthogonal matrices $U$, $V$ such that when $\cA$ is applied to the rotated instance $F_{U,V}$, $\{(x^0,y^0),...,(x^k,y^k)\}\subseteq U^\top\cH_x^{4k-1}\times V^\top\cH_y^{4k-1},$ and $(\tilde{x}^k,\tilde{y}^k)\in U^\top\cH_x^{4k+1}\times V^\top\cH_y^{4k+1}$}.
       
       Given the result of Part I, and let $\{(x_k^0, y_k^0),...,(x_k^k, y_k^k)\}$ and $(\tilde x_k^k, \tilde y_k^k)$ be generated by $\cA$ when applied to $F_{\hat U,\hat V} = F_{U_k,V_k}$. Therefore, by Lemma \ref{lemma:yyx}, there exist orthogonal matrices $P,Q$ such that 
       \begin{equation}
       \label{thm:gen-prox-8}
       \begin{cases}
       Px = P^\top x = x, \,\,\forall x\in U_{k}^\top\cH_x^{4k-1},\,\,\,\, P \tilde x_{k}^{k}\in U_{k}^\top\cH_x^{4k+1},\\
       Qy \,=Q^\top y = y, \,\,\forall y\in V_{k}^\top\cH_y^{4k-1},\,\,\,\, Q\tilde y_{k}^k\in V_{k}^\top\cH_y^{4k+1}.
       \end{cases}
       \end{equation}
       Define $U = U_kP$, and $V = V_kQ$. Let $\{(x^0,y^0),...,(x^k,y^k)\}$ and the output $(\tilde x^k,\tilde y^k)$ be generated by $\cA$ when applied to $F_{{U,V}}$. Then following the same line of argument of Case $k$, Part I, we have 
       $$(x^i,y^i) = (x^i_k,y^i_k), \,\,\mbox{for}\,\, i = 0,1,...,k\qquad\mbox{and}\qquad(\tilde x^k,\tilde y^k) = (\tilde x^k_k,\tilde y^k_k).$$
       Therefore, combining \eqref{thm:gen-prox-8}, we complete the proof of Part II.  
\end{proof}

\section{Proof of Lemma \ref{lemma:root-estimation}}
\label{appendix-root-estimation}
For the ease of analysis, let us perform a change of variable $r:=(1-q)^{-1}$. Then the quartic equation \eqref{eqn:4EQ} can be transformed to
\begin{equation}
f(r):= 1 + \alpha r + (\beta-\alpha)r^2-2\beta r^3 + \beta r^4 = 0
\end{equation}
Although the quartic equation does have a root formula, it is impractical to use the formula 
for the purpose of lower iteration complexity bound. Instead, we will provide an estimation of a large enough lower bound of $r$, which corresponds to lower bound on $q$.

First, we let $\bar r = \half +\sqrt{\frac{\alpha}{\beta} + \frac{1}{4}}$. Then $f(\bar r) = 1>0.$

Second, we let $\underline r = \half +\sqrt{\frac{\alpha}{2\beta} + \frac{1}{4}}$. Then, 
\begin{eqnarray*}
	\label{eqn:inter-1}
	f(\underline r) & = & \beta\left(-\frac{\alpha^2}{4\beta^2} + \frac{1}{\beta}\right)\\
	& = & \frac{\beta}{4}\left(-\left(\frac{L_{xy}^2}{4\mu_x\mu_y} + \frac{B_x}{\mu_x} + \frac{B_y}{\mu_y}\right)^2 + \frac{4B_xB_y}{\mu_x\mu_y}\right) \\
	& = &  \frac{\beta}{4}\left(-\left(\frac{L_{xy}^2}{4\mu_x\mu_y}\right)^2 -\frac{L_{xy}^2}{2\mu_x\mu_y}\cdot\left(\frac{B_x}{\mu_x} + \frac{B_y}{\mu_y}\right)-\left(\frac{B_x}{\mu_x} - \frac{B_y}{\mu_y}\right)^2\right) \\
	&<&0.
\end{eqnarray*}

Together with the fact that $f(\bar r) = 1>0$, by continuity we know there is a root $r$ between $\left(\underline r, \bar r\right)$, where
$$\underline r = \half +\sqrt{\frac{\alpha}{2\beta} + \frac{1}{4}} = \half +\frac{1}{2\sqrt{2}}\sqrt{\frac{L_{xy}^2}{\mu_x\mu_y} + \frac{L_x}{\mu_x} + \frac{L_y}{\mu_y}}$$ 
and 
$$\bar r = \half +\sqrt{\frac{\alpha}{\beta} + \frac{1}{4}} = \half +\frac{1}{2}\sqrt{\frac{L_{xy}^2}{\mu_x\mu_y} + \frac{L_x}{\mu_x} + \frac{L_y}{\mu_y}-1}$$
This further implies $$1-\underline r^{-1}<q<1 - \bar r^{-1},$$ which proves this lemma.

\section{Proof of Lemma \ref{lemma:apxsolu-pure}}
\label{appendix-apxsolu-pure}
First, by setting $\nabla \Phi(x^*) = 0$, we get
\begin{equation}
\label{eqn:KKT-pure-1}
(B_xA^2+\mu_xI)x^*  + \frac{L_{xy}^2}{4}A(B_yA^2 + \mu_yI)^{-1}\left(Ax^* - \frac{2b}{L_{xy}}\right) = 0.
\end{equation}
Note that matrix $A$ is invertible, with
\[
A^{-1} = \begin{pmatrix}
&    &     & 1\\
&    &  1 & 1\\
&\udots & \udots&\vdots\\
1     &1 & \cdots & 1\\
\end{pmatrix}
\]
Therefore, by the interchangability of $A(B_yA^2+\mu_yI) = (B_yA^2+\mu_yI)A$, we can take the inverse and get $(B_yA^2+\mu_yI)^{-1}A^{-1} = A^{-1}(B_yA^2+\mu_yI)^{-1}$. Left multiply by $A$ and right multiply by $A$ for both sides we get the interchangablity of
\[
A(B_yA^2+\mu_yI)^{-1} = (B_yA^2+\mu_yI)^{-1}A.
\]
Applying this on equation \eqref{eqn:KKT-pure-1} and multiplying both sides by $\frac{1}{B_xB_y}(B_yA^2 + \mu_yI)$, we can equivalently write the optimality condition as
\begin{equation}
\label{eqn:KKT-pure}
(A^4  + \alpha A^2 + \beta I)x^* = \hat b
\end{equation}
where
\begin{equation*}
\alpha = \frac{L_{xy}^2}{4B_xB_y} + \frac{\mu_x}{B_x} + \frac{\mu_y}{B_y},\qquad \beta = \frac{\mu_x\mu_y}{B_xB_y},\quad \mbox{ and }\quad \hat b = \frac{L_{xy}}{2B_xB_y}Ab.
\end{equation*}

The values of matrices $A^2$ and $A^4$ can be found in \eqref{defn:A}. For the ease of discussion, we may also write equation \eqref{eqn:KKT-pure} in an expanded form as: 
\begin{eqnarray}
\label{eqn:KKT-pure-expand}
\begin{cases}
\,\,\quad\qquad\qquad\qquad\qquad(2+\alpha + \beta)x_1^*\,\,\,\,\,\,\,\,\, - (3+\alpha)x_2^*\,\,\,\,\,\, + \,\,\,\,x_3^* \,\,\,& = \hat b_1\\
\,\,\,\,\,\,\,\,\,\,\,\,\,\,-\,(3+\alpha)x_1^* \,\,\,\,\,\,+ \,\,\,(6+2\alpha + \beta)x_2^*\,\,\,\,\,\, - (4+\alpha)x_3^*\,\,\,\,\,\, +\,\,\,\, x_4^*& =  \hat b_2\\
x^*_{k-2} - (4 + \alpha)x_{k-1}^* + \,\,\,(6+2\alpha + \beta)x_k^*\,\,\,\,\,\,- (4 + \alpha)x_{k+1}^* +\,\, y_{k+2}^*& = \hat b_k\quad\mbox{ for } 3\leq k\leq n-2\\
x^*_{n-3} - (4 + \alpha)x_{n-2}^* + \,\,(6+2\alpha + \beta)x_{n-1}^*- (4 + \alpha)x_{n}^*\qquad\quad\,\,\,\,& =  \hat b_{n-1}\\
x^*_{n-2} - (4 + \alpha)x_{n-1}^* + \,\,(5+2\alpha + \beta)x_{n}^*\qquad\qquad\qquad\qquad\quad\,& =  \hat b_{n} . 
\end{cases}
\end{eqnarray}
Because $q\in(0,1)$ is a root to the  quartic equation
$1 -(4+\alpha)q  + (6+2\alpha + \beta)q^2 - (4+\alpha)q^3  + q^4 = 0$, and our approximate solution $\hat x^*$ is constructed as $\hat x^*_i = q^i$.
By direct calculation one can check that the first $n-2$ equations are satisfied and the last 2 equations are violated with controllably residuals. 
Indeed, for the $(n-1)$-th equation the violation is of the order $q^{n+1}$, and for the $n$-th equation the violation is of the order $|-q^{n} + (4+\alpha)q^{n+1} - q^{n+2}|$. Similar to the arguments for \eqref{eqn:apxsolu-proximal}, we have
\[
\beta\|\hat x^*-x^*\|\leq \|(A^4 + \alpha A^2 + \beta I)(\hat x^* - x^*)\|\leq (7+\alpha)q^n . 
\]
That is, $\|\hat x^*-x^*\|\leq \frac{7+\alpha}{\beta}\cdot q^n$, which completes the proof.

\section{Proof of Lemma \ref{lemma:lowerbound-distance-pure}}
\label{appendix-lowerbound-distance-pure}
By the subspace characterization \eqref{eqn:subspace-pure}, we have
$$\|x^k-\hat x^*\|\geq q^k\sqrt{q^2+\cdots+q^{2(n-k)}}\geq\frac{q^k}{\sqrt{2}}\|\hat x^*-x^0\|, \quad\mbox{ for }\quad \forall 1\leq k\leq n/2.$$
When we set $k\leq \frac{n}{2}$ and $n\geq 2\log_q\left(\frac{\beta}{4\sqrt{2}(7+\alpha)}\right)+2$, by \eqref{eqn:apxsolu-pure} we also have
$$\|\hat x^*-x^*\|\leq q^n(7+\alpha)/\beta\leq \frac{q^k}{4\sqrt{2}}q \leq \frac{1}{4}\cdot\frac{q^k}{\sqrt{2}}\|\hat x^*-x^0\|.$$
Therefore, similar to \eqref{eqn:dist-lower-proximal}, we also have 
\begin{equation}
\|x^k-x^*\|^2\geq \frac{q^{2k}}{16}\|x^*-x^0\|^2
\end{equation}
which proves the lemma.

\section{Proof of $\ln(2ac^2)  = \Omega(1)$}
\label{appdx:Omega-1}
\begin{proof} 
	Note that $a = \min\{c^{-2},d^{-2}\}$, if $c^{-2}\leq d^{-2}$, then $ac^2 = 1$. Consequently, 
	$$\ln\left(2ac^2\right) = \ln 2 = \Omega(1).$$
	However, when $c^{-2}\geq d^{-2}$, the situation is more complicated. In this case, 
	$$ac^2 = \frac{c^2}{d^2} = \frac{R_y^2}{R_x^2}\cdot\frac{\|\hat x^*\|^2}{\|\hat y^*\|^2},$$
	where $\hat x^*$ and $\hat y^*$ is the solution to the unscaled worst-case instance $\hat F_\epsilon\in\cF(L_x,L_y,L_{xy},\mu_x,\mu_y)$. For the ease of discussion, let us take the dimension $n$ is sufficiently large so that we can view the approximate solution constructed in Lemma \ref{lemma:apxsolu-pure} as the exact solution. Therefore, we have 
	$$\begin{cases}
	\hat x^*(i) = {q^i},  \,\, i = 1,...,n\\
	(\mu_yI + B_yA^2)\hat y^* = \frac{L_{xy}}{2}A\hat x^* - b, 
	\end{cases}$$
    where $q$ is defined by Theorem \ref{thm:pure} and the second equality is due to the first-order stationary condition. Note that equation \eqref{eqn:KKT-pure-1} also provides that 
    $$(B_xA^2+\mu_xI)\hat x^*  + \frac{L_{xy}^2}{4}A(B_yA^2 + \mu_yI)^{-1}\left(A\hat x^* - \frac{2b}{L_{xy}}\right) = 0.$$
    Combining the above two relations, we have 
    \begin{eqnarray}
    \hat y^* & = & (\mu_yI + B_yA^2)^{-1}(\frac{L_{xy}}{2}A\hat x^* - b)\nonumber\\
    & = & -\frac{2}{L_{xy}}A^{-1}(B_xA^2+\mu_xI)\hat x^*\nonumber\\
    & = & -\frac{2B_x}{L_{xy}}A\hat x^*-\frac{128\epsilon}{L_{xy}R_x^2}A^{-1}\hat x^*.\nonumber
    \end{eqnarray}
    Substituting the specific forms of $A$ and $A^{-1}$, we have 
    $$\hat y^*(i) = \begin{cases}
    -\frac{2B_x}{L_{xy}} q^n - \frac{128\epsilon}{L_{xy}R_x^2}q^n, \quad i = 1\\
    -\frac{2B_x}{L_{xy}} q^{n+1-i}(1-q)- \frac{128\epsilon}{L_{xy}R_x^2}q^{n+1-i}\frac{1-q^i}{1-q}, \quad i\geq 2.
    \end{cases}$$
    Therefore, we have 
    \begin{eqnarray*}
    	\|\hat y^*\|^2 \leq  \left(\frac{2B_x}{L_{xy}} + \frac{128\epsilon}{L_{xy}R_x^2}\right)^2q^{2n} + \left(\frac{2B_x}{L_{xy}} (1-q)+ \frac{128\epsilon}{L_{xy}R_x^2(1-q)}\right)^2\sum^{n}_{i=1} q^{2i}.
    \end{eqnarray*}
    For ease of discussion, the following simplifications are made. First, we omit the $q^{2n}$ term since $q<1$ and $n$ is sufficiently large. Second, note that Lemma \ref{lemma:root-estimation} indicates that $1-q = \Theta(\epsilon)$, the term $\frac{2B_x}{L_{xy}} (1-q) = \cO(\epsilon)$ and the term $\frac{128\epsilon}{L_{xy}R_x^2(1-q)} = \Omega(1)$. Thus we also omit the $\frac{2B_x}{L_{xy}} (1-q)$ term which is significantly smaller. Therefore, we can write 
    \begin{eqnarray*}
    	\|\hat y^*\|^2 \leq  \left(\frac{128\epsilon}{L_{xy}R_x^2(1-q)}\right)^2\sum^{n}_{i=1} q^{2i} = \left(\frac{128\epsilon}{L_{xy}R_x^2(1-q)}\right)^2\|\hat x^*\|^2.
    \end{eqnarray*}
    As a result, 
    $$ac^2 = \frac{R_y^2}{R_x^2}\cdot\frac{\|\hat x^*\|^2}{\|\hat y^*\|^2} \geq \frac{L_{xy}^2R_y^2R_x^2(1-q)^2}{128^2 \epsilon^2}.$$
    In Lemma \ref{lemma:root-estimation}, we also have a lower bound of $1-q$ as
    $$1-q>\left(\half +\frac{1}{2}\sqrt{\frac{L_{xy}^2}{\mu_x\mu_y} + \frac{L_x}{\mu_x} + \frac{L_y}{\mu_y}-1}\right)^{-1} \overset{(i)}{>}\frac{128\epsilon}{L_{xy}R_xR_y}$$
    where (i) is because we have omitted the terms of smaller magnitude.
    Therefore,
    $$\ln\left(2ac^2\right) \geq \ln\left(\frac{2L_{xy}^2R_y^2R_x^2}{128^2 \epsilon^2}\cdot\frac{128^2\epsilon^2}{L_{xy}^2R_x^2R_y^2}\right) =  \ln\left(2\right) = \Omega(1).$$
    Thus we complete the proof.  
\end{proof}

\end{document}